\newcommand{\noi}{\noindent}
\newcommand{\la}{\lambda}
\newcommand{\beq}{\begin{eqnarray*}}
\newcommand{\eeq}{\end{eqnarray*}}
\newcommand{\beqn}{\begin{eqnarray}}
\newcommand{\eeqn}{\end{eqnarray}}
\newcommand{\var}{{\rm var}}
\newcommand{\bi}{\begin{itemize}}
\newcommand{\ei}{\end{itemize}}
\newcommand{\be}{\begin{equation}}
\newcommand{\ee}{\end{equation}}
\newcommand{\nn}{\nonumber}
\newcommand{\bG}{\mathbb{G}}
\newcommand{\bpsi} {\boldsymbol{\psi}}
\newcommand{\ignore}[1]{}{}
\newcommand{\sn}{\sum_{i=1}^n}
\newcommand{\sm}{\sum_{j=1}^m}
\newcommand{\de}{\delta}
\newcommand{\ga}{\gamma}
\newcommand{\s}{\sqrt}
\newcommand{\e}{\mathbb{E}}
\newcommand{\br}{\mathbb{R}}
\newcommand{\bxi} {\boldsymbol{\xi}}
\newcommand{\argmax}{\mathop{\rm arg\max}}
\numberwithin{equation}{section}
\theoremstyle{plain}
\newtheorem{lemma}{Lemma}[section]
\newtheorem{proposition}{Proposition}[section]
\newtheorem{Example}{Example}
\newtheorem{theorem}{Theorem}[section]
\newtheorem{assumption}{Assumption}[section]
\newtheorem{remark}{Remark}[section]
\begin{document}

\bibliographystyle{plainnat}

\title{Two-Sample Smooth Tests for the Equality of Distributions}

\author{Wen-Xin Zhou\thanks{Department of Operations Research and Financial Engineering, Princeton University, Princeton, NJ 08544, USA. E-mail: wenxinz@princeton.edu}~,
 Chao Zheng \thanks{School of Mathematics and Statistics, University of Melbourne, Parkville, VIC 3010, Australia. E-mail: zhengc1@student.unimelb.edu.au}~,
 and Zhen Zhang \thanks{Department of Statistics, University of Chicago, Chicago, IL 60637, USA. E-mail: zhangz19@galton.uchicago.edu}
}


\date{}

\maketitle

\vspace{-1.5cm}

\begin{abstract}
This paper considers the problem of testing the equality of two unspecified distributions. The classical omnibus tests such as the Kolmogorov-Smirnov and Cram\'er-von Mises are known to suffer from low power against essentially all but location-scale alternatives. We propose a new two-sample test that modifies the Neyman's smooth test and extend it to the multivariate case based on the idea of projection pursue.  The asymptotic null property of the test and its power against local alternatives are studied. The multiplier bootstrap method is  employed to compute the critical value of the multivariate test. We establish validity of the bootstrap approximation in the case where the dimension is allowed to grow with the sample size. Numerical studies show that the new testing procedures perform well even for small sample sizes and are powerful in detecting local features or high-frequency components.
\end{abstract}

\noindent
{\bf Keywords}: {\/ Neyman's smooth test; Goodness-of-fit; Multiplier bootstrap; High-frequency alternations; Two-sample problem}


\section{Introduction}
\label{intro.sec}

Let $X$ and $Y$ be two $\br^p$-valued random variables with continuous distribution functions $F $ and $G$, respectively, where $p\geq 1$ is a positive integer. Given data from each of the two {\it unspecified} distributions $F$ and $G$, we are interested in testing the null hypothesis of the equality of distributions

\be
	 H_0: F= G \ \  \mbox{ versus } \ \ H_{1}: F \neq G.   \label{test.dist}
\ee
This is the two-sample version of the conventional goodness-of-fit problem, which is one of the most fundamental hypothesis testing problems in statistics \citep{Lehmann_Romano_2005}.

\subsection{Univariate case: $p=1$}
\label{univariate.intro.sec}

Suppose we have two independent univariate random samples $\mathcal{X}_n =\{X_1,\ldots, X_n\}$ and $\mathcal{Y}_m=\{Y_1,\ldots, Y_m\}$ from $F$ and $G$, respectively. The empirical distribution functions (EDF) are given by
\be
	F_n(x) = \frac{1}{n} \sn I(X_i \leq x ) \ \ \mbox{ and }  \ \ G_m(y) = \frac{1}{m} \sm I(Y_j \leq y).  \label{edfs}
\ee
For testing the equality of two univariate distributions, conventional approaches in the literature use a measure of discrepancy between $F_n$ and $G_m$ as a test statistic. Prototypical examples include the Kolmogorov-Smirnov (KS) test $\hat \Psi_{{\rm KS}} = \s{nm/(n+m)} \sup_{t \in \br}  | F_n(t) - G_m(t) |$ and the Cram\'er-von Mises (CVM) family of statistics
$$
		\hat \Psi_{{\rm CVM}} = \frac{nm}{n+m} \int_{-\infty}^{\infty}  \{ F_n(t) -G_m(t)  \}^2 w(H_{n+m}(t)) \, d H_{n,m}(t),
$$
where  $H_{n,m}(t):=\{n F_n(t)+mG_m(t)\}/(n+m)$ denotes the pooled EDF and $w $ is a non-negative weight function. Taking $w \equiv 1$ yields the Cram\'er-von Mises statistic, and $w(t)=\{t(1-t)\}^{-1}$ yields the Anderson-Darling statistic \citep{darling1957kolmogorov}.

The traditional omnibus tests, which have been widely used for testing the two-sample goodness-of-fit hypothesis \eqref{test.dist} due to their simplicity with which they can be performed, suffer from low power in detecting densities containing high-frequency components or local features such as bumps, and thus may have poor finite sample power properties \citep{Fan_1996}. It is known from empirical studies that the CVM test has poor power against essentially all but location-scale alternatives \citep{Eubank_LaRiccia_1992}. The same issue arises in the KS test as well. To enhance power under local alternatives, Neyman's smooth method \citep{Neyman_1937}  was introduced earlier than the traditional omnibus tests, to test only the first $d$-dimensional sub-problem if there is prior that most of the discrepancies fall within the first $d$ orthogonal directions. Essentially, Neyman's smooth tests represent a compromise between omnibus and directional tests. As evidenced by numerous empirical studies over the years, smooth tests have been shown to be more powerful than traditional omnibus tests over a broad range of realistic alternatives. See, for example, \cite{Eubank_LaRiccia_1992}, \cite{Fan_1996}, \cite{Janssen_2000}, \cite{Bera_Ghosh_2002} and \cite{Escanciano_2009}.

A two-sample analogue of the Neyman's smooth test was recently proposed by \cite{Bera_Ghosh_Xiao_2013} for testing the equality of $F$ and $G$ based on two independent samples. The test statistic is asymptotically chi-square distributed and as a special case of Rao's score test, it enjoys certain optimality properties. Specifically, \cite{Bera_Ghosh_Xiao_2013} motivated the two-sample Neyman's smooth test by considering the random variable $V=F(Y)$ with distribution and density functions given by
\be
	H(z):=G(F^{-1}(z)) \ \ \mbox{ and } \ \  \rho(z):=g(F^{-1}(z))/f(F^{-1}(z))  \label{density.rho}
\ee
for $0<z<1$, respectively, where $F^{-1}$ is the quantile function of $X$, i.e. $F^{-1}(z)=\inf\{ x\in \mathbb{R}: F(x)\geq z\}$, and $f$ and $g$ denote the density functions of $X$ and $Y$. Assume that $F$ and $G$ are strictly increasing, then $H$ is also increasing, $\rho(z)\geq 0$ for $0<z<1$ and $\int_0^1 \rho(z)\, dz=1$. Under the null hypothesis $H_0$, $\rho \equiv 1$ so that $V =_d U(0,1)$. In other words, the null hypothesis $H_0$ in \eqref{test.dist} is equivalent to
\be
	\tilde{H}_0:  \rho(z) = 1 \ \ \mbox{ for all }  0< z <1,   \label{equivalent.null}
\ee
where $\rho$ is as in \eqref{density.rho}. Throughout, the function $\rho$ is referred as the ratio density function. Based on Neyman's smooth test principle, we restrict attention to the following smooth alternatives to the null of uniformity
\be
	\rho_{\theta}(z) = C_d(\theta) \exp\bigg\{ \sum_{k=1}^d \theta_k \psi_k(z) \bigg\} \ \ {\rm for } \ \  \theta:= (\theta_1,\ldots, \theta_d)^{\intercal} \in \br^d \ \  {\rm and } \ \ 0< z < 1, \label{alternatives}
\ee
which include a broad family of parametric alternatives, where $d=\mbox{dim}(\theta)$ is some positive integer and $\{ C_d(\theta) \}^{-1} = \int_0^1 \exp\big\{ \sum_{k=1}^d \theta_k \psi_k(z) \big\} \, dz$. Setting $\psi_0 \equiv 1$, the functions $\psi_1,\ldots, \psi_d$ are chosen in such a way that $\{\psi_0, \psi_1,\ldots, \psi_d \}$ forms a set of orthonormal functions, i.e.
\be
	\int_0^1 \psi_k(z) \psi_{\ell}(z) \, dz = \de_{k \ell} = \begin{cases}
 1, \ \  \mbox{ if }  k=\ell ,  \\
 0, \ \ \mbox{ if }  k \neq \ell.
\end{cases}  \label{orthonormality}
\ee
The null hypothesis asserts $H^d_{0}: \theta=0$. Assuming that $m\leq n$ and the truncation parameter $d$ is fixed, the two-sample smooth test proposed by \cite{Bera_Ghosh_Xiao_2013} is defined as $\hat \Psi_{{\rm BGX}} = m \hat{\bpsi}^{\intercal} \hat{\bpsi}$, where $\hat{\bpsi}= m^{-1} \sum_{j=1}^m \bpsi(\hat V_j )$, $\bpsi=(\psi_1,\ldots, \psi_d)^{\intercal}$ and $\hat V_j=F_n(Y_j)$. Under certain moment conditions and if the sample sizes $(n,m)$ satisfy $ m \log\log n =o(n)$ as $n, m\rightarrow \infty$, the test statistic $\hat \Psi_{{\rm BGX}}$ converges in distribution to the $\chi^2$ distribution with $d$ degrees of freedom. Accounting for the error of estimating $F$, \cite{Bera_Ghosh_Xiao_2013} further considered a generalized version of the smooth test that is asymptotically $\chi^2(d)$ distributed and can be applied when $n$ and $m$ are of the same magnitude.

However, \cite{Bera_Ghosh_Xiao_2013} only focused on the fix $d$ scenario (i.e. $d=4$) so that their two-sample smooth test is consistent in power against alternative where $V=F(Y)$ does not have the same first $k$ moments as that of the uniform distribution \citep{Lehmann_Romano_2005}. If there is a priori evidence that most of the energy is concentrated at low frequencies, i.e. large $\theta_k$ are located at small $k$, it is reasonable to use Neyman's smooth test. Otherwise, Neyman's test is less powerful when testing contiguous alternatives with local characters \citep{Fan_1996}. As \cite{Janssen_2000} pointed out, achieving reasonable power over more than a few orthogonal directions is hopeless. Indeed, the larger the value of $d$, the greater the number of orthogonal directions used to construct the test statistic. Therefore, it is possible to obtain consistency against all distributions if the truncation parameter $d$ is allowed to increase with the sample size. Motivated by \cite{chang2014simulation}, we regard $H^d_0:\theta=0$ as a global mean testing problem with dimension increasing with the sample size. When $d$ is large, Neyman's smooth test which is based on the $\ell_2$-norm of $\theta$ may also suffer from low powers under sparse alternatives. In part, this is because that the quadratic statistic accumulates high-dimensional estimation errors under $H^d_0$, resulting in large critical values that can dominate the signals under sparse alternatives.

To overcome the foregoing drawbacks, we first note that the traditional omnibus tests aim to capture the differences of two entire distributions as opposed to only assessing a particular aspect of the distributions, by contrast, Neyman's smooth principle reduces the original nonparametric problem to a $d$-dimensional parametric one. Lying in the middle, we are interested in enhancing the power in detecting two adjacent densities where one has local features or contains high-frequency components, while maintaining the same capability in detecting smooth alternative densities as the traditional tests. We expect to arrive at a compromise between desired significance level and statistical power by allowing the truncation parameter $d$ to increase with sample sizes. In Section~\ref{method.sec}, we introduce a new test statistic by taking maximum over $d$ univariate statistics. The limiting null distribution is derived under mild conditions, while $d$ is allowed to grow with $n$ and $m$. To conduct inference, a novel intermediate approximation to the null distribution is proposed to compute the critical value. In fact, when $n$ and $m$ are comparable, $d$ can be of order $n^{1/4}$ (resp. $n^{1/9}$) (up to logarithmic in $n$ factors) if the trigonometric series (resp. Legendre polynomial series) is used to construct the test statistic.

\subsection{Multivariate case: $p\geq 2$}
\label{multivariate.intro.sec}

As a canonical problem in multivariate analysis, testing the equality of two multivariate distributions based on the two samples has been extensively studied in the literature that can be dated back to \cite{Weiss_1960}, under the conventional fix $p$ setting. \cite{Friedman_Rafsky_1979} constructed a two-sample test based on the minimal spanning tree formed from the interpoint distances and their test statistic was shown to be asymptotically distribution free under the null; \cite{Schilling_1986} and \cite{Henze_1988} proposed nearest neighbor tests which are based on the number of times that the nearest netghbors come from the same group; \cite{Rosenbaum_2005} proposed an exact distribution-free test based on a matching of the observations into disjoint pairs to minimize the total distance within pairs. Work in the context of nonparametric tests include that of \cite{Hall_Tajvidi_2002}, \cite{Baringhaus_Franz_2004,Baringhaus_Franz_2010} and \cite{Biswas_Ghosh_2014}, among others.

Most aforementioned existing methods are tailored for the case where the dimension $p$ is fixed. Driven by a broad range of contemporary statistical applications, analysis of high-dimensional data is of significant current interest. In the high-dimensional setting, the classical testing procedures may have poor power performance, as evidenced by the numerical investigations in \cite{Biswas_Ghosh_2014}. Several tests for the equality of  two distributions in high dimensions have been proposed. See, for example, \cite{Hall_Tajvidi_2002} and \cite{Biswas_Ghosh_2014}. However, limiting null distributions of the test statistics introduced in \cite{Hall_Tajvidi_2002} and \cite{Biswas_Ghosh_2014} were derived when the dimension $p$ is fixed.

In the present paper, we propose a new test statistic that extends Neyman's smooth test principle to higher dimensions based on the idea of projection pursue. To conduct inference for the test, we employ the multiplier (wild) bootstrap method which is similar in spirit to that used in \cite{Hansen_1996} and \cite{Barrett_Donald_2003}. We refer to Section~\ref{multivariate.sec} for details on methodologies. It can be shown that (Propositions~\ref{limiting.null.supremum.statistic} and \ref{multiplier.bootstrap.theorem}), under mild conditions, the error in size of our multivariate smooth test decays polynomially in sample sizes $(n,m)$. It is noteworthy that we allow the dimension $p$ to grow as a function of $(n,m)$, a type of framework the existing methods do not rigorously address. More importantly, we do not limit the dependency structure among the coordinates in $X$ and $Y$ and no shape constraints of the distribution curves are known as a priori which inhibits a pure parametric approach to the problem.

\subsection{Organization of the paper}
\label{organization.sec}

The rest of the paper is organized as follows. In Section~\ref{method.sec}, we describe the two-sample smooth testing procedure in the univariate case. An extension to the multivariate setting based on projection pursue is introduced in Section~\ref{multivariate.sec}. Section~\ref{theory.section} establishes theoretical properties of the proposed smooth tests in both univariate and multivariate settings. Finite sample performance of the proposed tests is investigated in Section~\ref{numerical.sec} through Monte Carlo experiments. The proofs of the main results are given in Section~\ref{proof.section} and some additional technical arguments are contained in the Appendix.

\medskip
\noindent
{\sc Notation.} For a positive integer $p$, we write $[p]=\{1, 2, \ldots, p\}$ and denote by $|\cdot|_2$ and $|\cdot|_\infty$ the $\ell_2$- and $\ell_\infty$-norm in $\br^p$, respectively, i.e. $|x|_2=( x_1^2 +\cdots+ x_p^2)^{1/2}$ and $|x|_\infty=\max_{k\in [p]}|x_k|$ for $x=(x_1,\ldots, x_p)^{\intercal} \in \br^p$. The unit sphere in $\br^p$ is denoted by $\mathcal{S}^{p-1}=\{ x\in \br^p : |x|_2=1\}$. For two $\br^p$-valued random variables $X$ and $Y$, we write $X =_d Y $ if they have the same probability distribution and denote by $P_X$ the probability measure on $\br^p$ induced by $X$. For two real numbers $a$ and $b$, we use the notation $a\vee b=\max(a,b)$ and $a\wedge b=\min(a,b)$.  For two sequences of positive numbers $a_n$ and $b_n$, we write $a_n \asymp b_n$ if there exist constants $c_1, c_2>0$ such that for all sufficiently large $n$, $c_1\leq a_n/b_n \leq c_2$, we write $a_n =  O(b_n)$ if there is a constant $C>0$ such that for all $n$ large enough, $a_n \leq C b_n$, and we write $a_n \sim b_n$ or $a_n \simeq b_n$ and $a_n = o(b_n)$, respectively, if $\lim_{n \rightarrow \infty} a_n/b_n =1$ and $\lim_{n\rightarrow\ \infty} a_n/b_n=0$. For any two functions $f, g:\br \mapsto \br$, we denote with $f\circ g$ the composite function $f\circ g(x)=f\{g(x)\}$ for $x\in \br$.

For any probability measure $Q$ on a measurable space $(\mathbb{S}, \mathcal{S})$, let $\| \cdot \|_{Q,2}$ be $L_2(Q)$-seminorm defined by $\| f\|_{Q,2}= (Q |f|^2)^{1/2} = (\int |f|^2 \, dQ)^{1/2}$ for $f\in L_2(Q)$. For a class of measurable functions $\mathcal{F}$ equipped with an envelope $F(s) = \sup_{f\in \mathcal{F}} |f(s)|$ for $s\in \mathbb{S}$, let $N(\mathcal{F}, L_2(Q), \varepsilon \| F\|_{Q,2} )$ denote the $\varepsilon$-covering number of the class of functions $\mathcal{F}$ with respect to the $L_2(Q)$-distance for $0<\varepsilon\leq 1$. We say that the class $\mathcal{F}$ is {\it Euclidean} or {\it VC-type} \citep{Nolan_Pollard_1987,van der Vaart_Wellner_1996} if there are constants $A, v>0$ such that $\sup_Q N(\mathcal{F}, L_2(Q), \varepsilon \| F\|_{Q,2} )\leq (A/\varepsilon)^v$ for all $0<\varepsilon\leq 1$,  where the supremum ranges over  over all finitely discrete probability measures on $(\mathbb{S},\mathcal{S})$. When $\mathbb{S}=\br^p$ for $p\geq 1$, we use $\mathcal{S}$ to denote the Borel $\sigma$-algebra unless otherwise stated.

\section{Testing equality of two univariate distributions}
\label{method.sec}

\subsection{Oracle procedure}
\label{oracle.sec}

Without loss of generality, we assume $n \geq m$ and recall that the null hypothesis $H_0:F=G$ is equivalent to $\tilde H_0: V =_d U(0,1)$ for $V=F(Y)$ as in \eqref{equivalent.null}. Following \cite{Bera_Ghosh_Xiao_2013}, we consider the smooth alternatives lying in the family of densities \eqref{alternatives} which is a $d$-parameter exponential family, where $d=d_{n,m}$ is allowed to increase with $n$ and $m$ in order to obtain power against a large array of alternatives. In particular, this family is quadratic mean differentiable at $\theta=0$ and therefore the score vector at $\theta=0$ is given by $m^{-1/2} \big(  \frac{\partial}{\partial \theta_1} \log L_m(\theta) , \ldots,  \frac{\partial}{\partial \theta_d} \log L_m(\theta) \big) |_{\theta=0}$ \citep{Lehmann_Romano_2005}, where $L_m(\theta)=\{ C_d(\theta) \}^m \exp  \big\{ \sm \sum_{k=1}^d \theta_k \psi_k(V_j) \big\}$ is the likelihood function and $V_j=F(Y_j)$, such that $ \frac{\partial}{\partial \theta_k}  \log L_m(\theta) = \sm [ \psi_k(V_j) - E_{\theta} \{  \psi_k(V_j) \} ]$. As $\{\psi_0 \equiv 1, \psi_1,\ldots, \psi_d \}$ forms a set of orthonormal functions, it is easy to see that if $\theta=0$, $E_{\theta} \{  \psi_k(V) \} =0$ and $E_{\theta} \{  \psi_k(V)^2 \} =1$ for every $k\in [d]$.

To provide a more omnibus test against a broader range of alternatives, we allow a large truncation parameter $d$ and for the reduced null hypothesis $H^d_{0}: \theta =0$, it is instructive to consider the following oracle statistic
\be
	  \Psi(d)  = \max_{1\leq k\leq d }  \bigg| \frac{1}{\sqrt{m}}\sm \psi_k(V_j) \bigg|, \label{oracle.test}
\ee
which can be regarded as a {\it smoothed} version of the KS statistic. Throughout, the number of orthogonal directions $d=\mbox{dim}(\theta)$ is chosen such that $d\leq n \wedge m$. Intuitively, this extreme value statistic is appealing when most of the energy (non-zero $\theta_k$) is concentrated on a few dimensions but with unknown locations, meaning that both low- and high-frequency alternations are possible. Now it is a common belief \citep{tony2014two} that maximum-type statistics are powerful against sparse alternatives, which in the current context is the case where the two densities only differ in a small number of orthogonal directions (not necessarily in the first few). To see this, consider a contiguous alternative where there exists some $\ell^*$ such that $\theta_{\ell^*} \neq 0$ with $|\theta_{\ell^*}|$ sufficiently small and $\theta_\ell=0$ for all other $\ell$, then informally we have $C_d^{-1}(\theta) = \int_0^1 \exp \{ \theta_{\ell^*} \psi_{\ell^*}(z)  \} \, dz \simeq  \int_0^1  \{ 1+  \theta_{\ell^*} \psi_{\ell^*}(z)  \} \, dz =1$ and
\begin{align*}
	E_{\theta} \{  \psi_k(V) \}  = C_d(\theta) \int_0^1 \psi_k(z) \exp \{ \theta_{\ell^*} \psi_{\ell^*}(z)  \} \, dz  \simeq  \int_0^1 \psi_{k}(z)  \{ 1+  \theta_{\ell^*} \psi_{\ell^*}(z)  \} \, dz =  \theta_{\ell^*} \, \de_{k \ell^*}  .
\end{align*}
Under a sparse alternative where only a few components of $\theta=(\theta_1,\ldots, \theta_d)^{\intercal}$ are non-zero, the power typically depends on the magnitudes of the signals (non-zero coordinates of $\theta$) and the number of the signals.

\subsection{Data-driven procedure}
\label{data-driven.sec}

For the oracle statistic $\Psi(d)$ in \eqref{oracle.test}, the random variables $V_j=F(Y_j)$ are not directly observed as the distribution function $F$ is {\it unspecified}. Indeed, this is the major difference of the two-sample problem from the classical (one-sample) goodness-of-fit problem. We therefore consider the following data-driven procedure. In the first stage, an estimate $\hat{V}_j$ of $V_j$ is obtained by using the empirical distribution function $F_n$:
\be
	\hat V_j = F_n(Y_j) = \frac{1}{n} \sn I(X_i\leq Y_j).   \label{generated.Z}
\ee
Then the data-driven version of $\Psi(d)$ in \eqref{oracle.test} is defined by
\be
	\hat \Psi  =\hat{\Psi}(d)= \s{\frac{nm}{n+m}}\max_{ 1\leq k\leq d } | \hat \psi_k  | . \label{data-driven.test}
\ee
where $\hat{\psi}_k=m^{-1}\sm \psi_k(\hat V_j) $. In the case $m> n$, we may use $G_m $ instead of $F_n $, leading to an alternative test statistic $ \tilde{\Psi}(d)= \s{nm/(n+m)}\max_{ 1\leq k\leq d }  | n^{-1}\sn \psi_k(G_m(X_i) ) |$.

Typically, large values of $\hat \Psi$ lead to a rejection of the null $H^d_{0}:\theta=0$ and hence of $H_0:F=G$, or equivalently, $\tilde{H}_0$ in \eqref{equivalent.null}. For conducting inference, we need to compute the critical value so that the corresponding test has approximately size $\alpha$. A natural approach is to derive the limiting distribution of the test statistic $\hat{\Psi}(d)$ under the null. Under certain smoothness conditions on $\psi_k$, it can be shown that for every $k\in [d]$,
\begin{align*}
	\hat{\psi}_k = \frac{1}{m} \sm \psi_k(\hat{V}_j) \simeq \frac{1}{m}\sm \psi_k(V_j) - \frac{1}{n} \sn \psi_k(U_i) ,
\end{align*}
where $U_i = G(X_i)$. See, for example, \eqref{dec.1} and \eqref{Hoeffding.dec} in the proof of Proposition~\ref{thm.limiting.dist}. Under $H_0$ and when $d\geq 1$ is fixed, a direct application of the multivariate central limit theorem is that as $n,m\rightarrow \infty$,
\be
	\sqrt{\frac{nm}{n+m}} \big( \hat{\psi}_1, \ldots, \hat{\psi}_d \big)^{\intercal}    \xrightarrow d  \mathbf{G} =_d  N\big( 0,\mathbf{I}_d \big). \label{multivariate.CLT}
\ee
where $\mathbf{I}_d$ is the $d$-dimensional identity matrix. This implies by the continuous mapping theorem that $\hat{\Psi}(d) \xrightarrow d |\mathbf{G}|_\infty$ when $d$ is fixed.

For every $0<\alpha<1$, denote by $z_\alpha$ the $(1-\alpha)$-quantile of the standard normal distribution, i.e. $z_\alpha=\Phi^{-1}(1-\alpha)$. Then, the $(1-\alpha)$-quantile of $|\mathbf{G}|_\infty$ can be expressed as $c_\alpha(d) = z_{1/2-(1-\alpha)^{1/d}/2}=\Phi^{-1}(1/2+(1-\alpha)^{1/d}/2)$. The corresponding asymptotic $\alpha$-level {\it Smooth} test is thus defined as
\be
	\Phi^{{\rm S}}_{\alpha}(d)=I\big\{  \hat \Psi(d)  \geq   c_\alpha(d)  \big\} . \label{alpha.level.test}
\ee
The null hypothesis $H_0$ is rejected if and only if $\Phi^{{\rm S}}_{\alpha}(d)=1$.

To construct test that has better power for alternative densities with large energy at high frequencies, we allow the truncation parameter $d=\mbox{dim}(\theta)$ to increase with sample sizes $n$ and $m$. This setup was previously considered by \cite{Fan_1996} in the context of the Gaussian white noise model, where it was argued that if there is a priori evidence that large $\theta_k$'s are located at small $k$, then it is reasonable to select a relatively small $d$; otherwise the resulting test may suffer from low power in detecting densities containing high-frequency components. However, by letting $d$ to increase with sample sizes we allow for different asymptotics than Neyman's fix $d$ large sample scenario. This type of asymptotics aims to illustrate how the truncation parameter $d$ may affect the quality of the test statistic, and to depict a more accurate picture of the behavior for fixed samples. In the present two-sample context, it will be shown (Proposition~\ref{thm.limiting.dist}) that the distribution of $ \hat \Psi(d)$ can still be consistently estimated by that of $|\mathbf{G}|_\infty$ with the truncation parameter $d$ increasing polynomially in $n$ and $m$, where $\mathbf{G}$ is a $d$-dimensional centered Gaussian random vector with covariance matrix $\mathbf{I}_d$. Consequently, the asymptotic size of the smooth test $\Phi^{{\rm S}}_\alpha(d)$ in \eqref{alpha.level.test} coincides with the nominal size $\alpha$ (Theorem~\ref{asymptotic.size}).

\subsection{Choice of the function basis}
\label{basis.sec}

In this paper, we shall focus the following two sets of orthonormal functions with respect to the Lebesgue measure on $[0,1]$, which are the most commonly used  basis for constructing smooth-type goodness-of-fit tests.

\begin{itemize}
\item[(i)](Legendre Polynomial (LP) series). Neyman's original proposal \citep{Neyman_1937} was to use orthonormal polynomials, now known as the normalized Legendre polynomials. Specifically, $\psi_k$ is chosen to be a polynomial of degree $k$ which is orthogonal to all the ones before it and is normalized to size $1$ as in \eqref{orthonormality}. Setting $\psi_0 \equiv 1$, the next four $\psi_k$'s are explicitly given by: $\psi_1(z)=\sqrt{3}(2z-1)$, $\psi_2(z) =\sqrt{5}(6z^2-6z+1)$, $\psi_3(z) =\sqrt{7}(20z^3-30z^2+12z-1)$ and $\psi_4(z) =3(70z^4-140z^3+90z^2-20z+1)$. In general, the normalized Legendre polynomial of order $k$ can be written as
\be
	\psi_k(z)=\frac{\sqrt{2k+1}}{k !} \frac{d^k}{d z^k} (z^2-z)^k, \quad 0\leq z\leq 1, \quad  k=1, 2, \ldots . \label{Legendre.polynomials}
\ee
See, for example, Lemma~1 in \cite{Bera_Ghosh_Xiao_2013}.

\item[(ii)](Trigonometric series). Another widely used basis of orthonormal functions is a trigonometric series given by
\be
	\psi_k(z) = \sqrt{2} \cos(\pi k z), \quad   0\leq z\leq 1, \quad  k= 1, 2, \ldots .  \label{trigonometric.series}
\ee
This particular choice arises in the construction of the weighted quadratic type test statistics, including  the Cram\'er-von Mises and the Anderson-Darling test statistics as prototypical examples \citep{Eubank_LaRiccia_1992,Lehmann_Romano_2005}. Alternatively, one could use the Fourier series which is also a popular trigonometric series given by  $\{ \cos(2\pi kz), \sin(2\pi kz) : k=1, \ldots, d/2 \}$ for $d$ even.
\end{itemize}

Other commonly used compactly supported orthonormal series include spline series \citep{Boor_1978}, Cohen-Deubechies-Vial wavelet series \citep{Mallat_1999} and local polynomial partition series \citep{cattaneo2013optimal}, among others. As the two-sample test statistics constructed in this paper use orthonormal functions that are at least twice continuously differentiable on $[0,1]$, we restrict attention to the Legendre polynomial series \eqref{Legendre.polynomials} and the trigonometric series \eqref{trigonometric.series} only. Indeed, the idea developed here can be directly applied to construct (one-sample) goodness-of-fit tests in one and higher dimensions without imposing smoothness conditions on the series.

\section{Testing equality of two multivariate distributions}
\label{multivariate.sec}

Evidenced by both theoretical (Section~\ref{univariate.theory.sec}) and numerical (Section~\ref{numerical.sec}) studies, we see that Neyman's smooth test principle leads to convenient and powerful tests for univariate data. However, the presence of multivariate joint distributions makes it difficult, or even unrealistic, to consider a direct multivariate extension of the smooth alternatives given in \eqref{alternatives}. In the case of complete independence where all the components of $X$ and $Y$ are independent, the problem for testing equality of two multivariate distributions is equivalent to that for testing equality of many marginal distributions. Neyman's smooth principle can therefore be employed to each of the $p$ marginals.

In this section, we do not impose assumption that limits the dependence among the coordinates in $X$ and $Y$ and note here that the null hypothesis $H_0:F=G$ is equivalent to $H_0: 	u^{\intercal} X   =_d  u^{\intercal} Y, \, \forall u \in \mathcal{S}^{p-1}$. This observation and the idea of projection pursue now allow to apply Neyman's smooth test principle, yielding a family of univariate smooth tests indexed by $u\in \mathcal{S}^{p-1}$ based on which we shall construct our test that incorporates the correlations among all the one-dimensional projections.

\subsection{Test statistics}

Assume that two independent random samples, $X_1,\ldots , X_n$ from the distribution $ F$ and $Y_1,\ldots, Y_m$ from the distribution $G $ are observed, where the two samples sizes are comparable and $m\leq n$. Along every direction $u\in \mathcal{S}^{p-1}$, let $F^u $ and $G^u $ be the distribution functions of one-dimensional projections $u^{\intercal} X$ and $u^{\intercal} Y$, respectively, and define the corresponding empirical distribution functions by
\be
	F^u_{n}(x)=\frac{1}{n}\sn I(u^{\intercal} X_i \leq x) \ \ \mbox{ and } \ \  G^u_{m}(y) = \frac{1}{m} \sm I(u^{\intercal} Y_j\leq y). \label{edf.t}
\ee
As a natural multivariate extension of the KS test, we consider the following test statistic
\be
	\hat{\Psi}_{{\rm MKS}} = \s{\frac{nm}{n+m}} \sup_{(u,t)\in \mathcal{S}^{p-1}\times \br}  | F^u_n(t)-G^u_m(t) |,   \nn 
\ee
which coincides with the KS test when $p=1$. \cite{Baringhaus_Franz_2004} proposed a multivariate extension of the Cram\'er-von Mises test which is of the form
$$
\hat{\Psi}_{{\rm BF}} = \frac{nm}{n+m}\int_{\mathcal{S}^{p-1}}\int_{-\infty}^{+\infty} \{ F^u_n(t) - G^u_m(t)  \}^2 \, dt \, \vartheta(du),
$$
where $\vartheta$ denotes the Lebesgue measure on $\mathcal{S}^{p-1}$. Despite their popularities in practice, the classical omnibus distribution-based testing procedures suffer from low power in detecting fine features such as sharp and short aberrants as well as global features such as high-frequency alternations \citep{Fan_1996}. Now it is well-known that the foregoing drawbacks can be well repaired via smoothing-based test statistics. This motivates the following multivariate smooth test statistic.

As in Section~\ref{method.sec}, let $\{\psi_0\equiv 1, \psi_1, \ldots, \psi_d\}$ ($d\geq 1$) be a set of orthonormal functions and put $\bpsi=(\psi_1,\ldots, \psi_d)^{\intercal}: \br \mapsto \br^d$. Using the union-intersection principle, the two-sample problem of testing $H_0: F = G$ versus $H_1: F\neq G$ can be expressed a collection of univariate testing problems, by noting that $H_0$ and $H_1$ are equivalent to $\cap_{u\in \mathcal{S}^{p-1}} H_{u,0 }$ and $\cup_{u\in \mathcal{S}^{p-1}} H_{u,1 }$, respectively, where
$$
	H_{u,0} : u^{\intercal} X   =_d u^{\intercal} Y , \qquad H_{u,1}: u^{\intercal} X \neq_d  u^{\intercal} Y.
$$
For every marginal null hypothesis $H_{u,0 }$, we consider a smooth-type test statistic in the same spirit as in Section~\ref{oracle.sec} that
\be
	 \Psi_u(d) = \bigg| \frac{1}{m}\sm  \bpsi ( V^u_{j}) \bigg|_\infty   \ \ \mbox{ with } \,  V^u_{j} = F^u(u^{\intercal} Y_j).  \label{marginal.test.statistic}
\ee
For diagnostic purposes, it is interesting to find the best separating direction, i.e. $	u_{\max} : = \arg\max_{u\in \mathcal{S}^{p-1}}   \Psi_u(d)$, along which the two distributions differ most. For the purpose of conducting inference which is the main objective in this paper, we just plug $u_{\max}$ into \eqref{marginal.test.statistic} to get the oracle test statistic ${\Psi}_{\max}(d) := \Psi_{u_{\max}}(d)$, though it is practically infeasible as the distribution function $F $ is unspecified. The most natural and convenient approach is to replace $ V^u_{j}$ in \eqref{marginal.test.statistic} with $\hat{V}^u_j=F^u_n(u^{\intercal} Y_j)$, leading to the following extreme value statistic for testing $H_0:F=G$,
\be
	 \hat{\Psi}_{\max} = \hat{\Psi}_{\max}(d) = \s{\frac{nm}{n+m}} \sup_{u\in \mathcal{S}^{p-1} }\hat{\Psi}_u(d), \label{multivariate.test.statistic}
\ee
where $\hat{\Psi}_u(d) = |m^{-1}\sm \bpsi(\hat{V}^u_j)|_\infty = \max_{1\leq k\leq d}  |  \hat \psi_{u,k} |$ with $ \hat \psi_{u,k} = m^{-1}\sm \psi_k(\hat{V}^u_j)$. Rejection of the null is thus for large values of $\hat{\Psi}_{\max}$, say $\hat{\Psi}_{\max}  > c_\alpha(d)$, where $c_\alpha(d)$ is a critical value to be determined so that the resulting test has the pre-specified significance level $\alpha \in (0,1)$ asymptotically.

\subsection{Critical values}
\label{multivariate.cv.sec}

Due to the highly complex dependence structure among $\{ \hat{\psi}_{u,k}\}_{(u,k)\in \mathcal{S}^{p-1} \times [d]}$, the limiting (null) distribution of $\hat \Psi_{\max}(d)$ may not exist. In fact, $\hat \Psi_{\max}(d)$ can be regarded as the supremum of an empirical process indexed by the class
$$
	\hat{\mathcal{F}}_{n,m} :=\bigg\{  x \mapsto \psi_k \bigg( \frac{1}{n} \sn I \{ u^{\intercal}(x - X_i) \geq 0 \}  \bigg) : (u, k) \in \mathcal{S}^{p-1} \times [d] \bigg\}
$$
of functions $\br^p \mapsto \br$; that is, $\hat \Psi_{\max}(d)= \s{nm/(n+m)}\sup_{f\in \hat{\mathcal{F}}_{n,m}} | m^{-1} \sm f(Y_j)|$. As we allow the dimension $p$ to grow with sample sizes, the ``complexity'' of $\hat{\mathcal{F}}_{n,m}$ increases with $n, m$ and is thus non-Donsker. Therefore, the extreme value statistic $\hat \Psi_{\max}(d)$, even after proper normalization, may not be weakly convergent as $n,m\rightarrow \infty$. Tailored for such non-Donsker classes of functions that change with the sample size, \cite{chernozhukov2013gaussian} developed Gaussian approximations for certain maximum-type statistics under weak regularity conditions. This motivates us to take a different route by using the multiplier (wild) bootstrap method to compute the critical value $ c_\alpha(d)$ for the statistic $\hat \Psi_{\max}(d)$ so that the resulting test has approximately size $\alpha \in (0,1)$.

Let $\{Z_1,\ldots, Z_{N}\}=\{Y_1,\ldots, Y_m, X_1,\ldots, X_n\}$ denote the pooled sample with a total sample size $N=n+m$. For every $(u,k)\in \mathcal{S}^{p-1} \times [d]$, we shall prove in Section~\ref{proof.multivariate.theorem} that
$$
	\sqrt{\frac{nm}{n+m}} \, \hat{\psi}_{u, k} \simeq  \frac{1}{\s{N}} \sum_{j=1}^N w_j \, \psi_k\circ F^u(u^{\intercal} Z_i),
$$
where $w_j=\s{n/m}$ for $j\in [m]$ and $w_j=-\s{m/n}$ for $j\in m+[n]$. This implies that, under certain regularity conditions, $\hat \Psi_{\max}(d)  \simeq  \sup_{ f \in {\mathcal{F}}^p_{ d}}   |N^{-1/2} \sum_{j=1}^N w_j  f(Z_j) |$, where ${\mathcal{F}}^p_{ d}=\{ x \mapsto \psi_k\circ F^u(u^{\intercal}x) : (u,k)\in \mathcal{S}^{p-1}\times [d]\}$. Furthermore, we shall prove in Proposition~\ref{limiting.null.supremum.statistic} that, under the null hypothesis $H_0:F=G$,
\be
	 \hat \Psi_{\max}(d)     \simeq_d   \,  \| \bG   \|_{ {\mathcal{F}}^p_{ d}} := \sup_{ f \in {\mathcal{F}}^p_{ d}}  | \bG f  |, \label{heuristic.approximation}
\ee
where $\bG $ is a centered Gaussian process indexed by ${ \mathcal{F}}^p_{ d}$ with covariance function
\begin{align}
 &    	E \{ ( \bG    f_{u, k} ) ( \bG   f_{v, \ell} ) \}  \nn \\
 &  = P_X ( f_{u, k}  f_{v,\ell} )  = E  \{ f_{u,k} (X)   f_{v,\ell}(X) \}   =\int_{\br^p}  \psi_k(F^u(u^{\intercal} x))\psi_\ell(F^v(v^{\intercal} x)) \, dF(x),  \label{sigma.ukvl}
\end{align}
where $U^u =F^u(u^{\intercal} X) =_d {\rm Unif}(0,1)$ for $u\in \mathcal{S}^{p-1}$. In particular, $E \{ ( \bG  f_{u, k} ) ( \bG f_{u, \ell} ) \}= \delta_{k \ell}$. The distribution of $ \| \bG \|_{{\mathcal{F}}^p_{ d}}$, however, is unspecified because its covariance function is unknown. Therefore, in practice we need to replace it with a suitable estimator, and then simulate the Gaussian process $\bG$ to compute the critical value $c_\alpha(d)$ numerically, as described~below.

\medskip
\noi
{\sc Multiplier bootstrap}.
\begin{itemize}
\item[(i)] Independent of the observed data $\{X_i\}_{i=1}^n$ and $\{Y_j\}_{j=1}^m$, generate i.i.d. standard normal random variables $e_1,\ldots, e_n$. Then construct the {\it Multiplier Bootstrap} statistic
\be
	\hat{\Psi}_{\max}^{{\rm MB}} = \hat{\Psi}_{\max}^{{\rm MB}}(d) =\sup_{(u,k)\in \mathcal{S}^{p-1} \times [d]} \bigg| \frac{1}{\s{n}}\sn e_i \psi_k(\hat{U}^u_i) \bigg|,  \label{bootstrap.statistic}
\ee
where $\hat{U}^u_i=F^u_n(u^{\intercal} X_i)$ for $F^u_n(\cdot)$ as in \eqref{edf.t}.

\item[(ii)] Calculate the data-driven critical value $\hat c^{{\rm MB}}_\alpha(d)$ which is defined as the conditional $(1-\alpha)$-quantile of $\hat{\Psi}_{\max}^{{\rm MB}}$ given $\{X_i\}_{i=1}^n$; that is,
\be
	\hat c^{{\rm MB}}_\alpha( d) = \inf\big\{ t\in \br :  P_e \big( \hat{\Psi}_{\max}^{{\rm MB}} > t \big)  \leq \alpha \big\} \label{data-driven.critical.value}
\ee
where $P_e$ denotes the probability measure induced by the normal random variables $\{e_i\}_{i=1}^n$ conditional on $\{X_i\}_{i=1}^n$.
\end{itemize}

For every $t\geq 0$, $P_e(\hat{\Psi}_{\max}^{{\rm MB}} \leq t)$ is a random variable depending on $\{X_i\}_{i=1}^n$ and so is $\hat{c}^{{\rm MB}}_\alpha(d)$, which can be computed with arbitrary accuracy via Monte Carlo simulations. Consequently, we propose the following {\it Multivariate Smooth} test
\be
	 {\Phi}^{{\rm MS}}_{\alpha}(d) = I \big\{ \hat{\Psi}_{\max}(d) \geq \hat c^{{\rm MB}}_\alpha( d) \big\} .  \label{multivariate.test}
\ee
The null hypothesis $H_0: F=G$ is rejected if and only if ${\Phi}^{{\rm MS}}_{\alpha}(d) =1$.

\section{Theoretical properties}
\label{theory.section}

Assume that we are given independent samples from the two (univariate and multivariate) distributions. As different sample sizes are allowed, for technical reasons we need to impose assumptions about the way in which samples sizes grow. The following gives the basic assumptions on the sampling process.

\begin{assumption} \label{basic.assumption} \ \
{\rm
\begin{itemize}
\item[(i)] $\{X_1,\ldots, X_n\}$ and $\{Y_1,\ldots, Y_m\}$ are two independent random samples from $X $ and $Y $, with absolute continuous distribution functions $F $ and $G $, respectively;

\item[(ii)] The sample sizes, $n$ and $m$, are comparable in the sense that $c_0 n \leq m\leq n$ for some constant $0<c_0\leq 1$.
\end{itemize}
}
\end{assumption}

Let $\{\psi_0\equiv 1, \psi_1, \ldots, \psi_d\}$ be a sequence of twice differentiable orthonormal functions $[0,1] \mapsto \br$, where $d\geq 1$ is the truncation parameter. Moreover, for $\ell=0,1 , 2$, define
\be
 B_{\ell d} =  \max_{1\leq k\leq d} \| \psi_k^{(\ell)} \|_\infty = \max_{1\leq k\leq d} \max_{z \in [0,1]}  |\psi^{(\ell)}_k(z)| . \label{Bd.def}
\ee
These quantities will play a key role in our analysis. For the particular choice of the function basis as in \eqref{Legendre.polynomials} and \eqref{trigonometric.series}, we specify below the order of $B_{\ell d}$, as a function of $d$, for $\ell=0, 1, 2$.

\begin{itemize}
\item[(i)](Legendre polynomial series). For the normalized Legendre polynomials $\psi_k$, it is known that $B_{0d}=\max_{1\leq k\leq d} \max_{z\in [0,1]} |\psi_k(z)| = \sqrt{2d+1}$. See, e.g. \cite{Sansone_1959}. Moreover, by the Markov inequality \citep{Shadrin_1992}, $\|  \psi'_k \|_\infty \leq k^2 \|  \psi_k \|_\infty$ and $\|  \psi''_k \|_\infty \leq \frac{k^2(k^2-1)}{3} \|  \psi_k \|_\infty$. Together with \eqref{Bd.def}, this implies
\be
	B_{0d} = \sqrt{2d+1} , \quad B_{1d} \leq  \sqrt{3} \, d^{\,5/2}  \quad \mbox{ and } \quad  B_{2d} \leq  3^{-1/2} d^{\,9/2}.  \label{LP.bound}
\ee

\item[(ii)](Trigonometric series). For the trigonometric series $\psi_{k}(z)=\sqrt{2} \cos(\pi kz)$, it is straightforward to see that $\psi'_k(z)=-\sqrt{2}\, \pi k \sin(\pi kz)$ and $\psi''_k(z)=-\sqrt{2}\,\pi^2 k^2 \cos(\pi kz)$. Consequently, we have
\be
	B_{0d} = \sqrt{2} , \quad B_{1d} \leq  \sqrt{2} \,  \pi d  \quad \mbox{ and } \quad  B_{2d} \leq   \sqrt{2}\, \pi^2 d^2.  \label{T.bound}
\ee
\end{itemize}

\subsection{Asymptotic properties of $\Phi^{{\rm S}}_\alpha(d)$}
\label{univariate.theory.sec}

\begin{assumption} \label{regularity.assumption}
{\rm
The truncation parameter $d$ is such that $d\leq m$ and as $n\rightarrow \infty$,
$$
	 (\log n)^{7/6} B_{0d} =o(n^{1/6})  , \quad   ( \log n)^{3/2} B_{1 d} =o(n^{1/2}  ), \quad  (\log n)^{1/2} B_{2d} =o(n^{1/2}) .
$$
}
\end{assumption}

The next theorem establishes the validity of the univariate smooth test $\Phi^{{\rm S}}_\alpha(d)$ in \eqref{alpha.level.test}.

\begin{theorem} \label{asymptotic.size}
Suppose that Assumptions~\ref{basic.assumption} and \ref{regularity.assumption} hold. Then as $n, m \rightarrow \infty$,
\be
	 \sup_{0<\alpha< 1} \big|  P_{ H_{0}} \big\{ \Phi^{{\rm S}}_{ \alpha}(d) =1 \big\} - \alpha \big| \rightarrow 0.	\label{size.consist}
\ee
\end{theorem}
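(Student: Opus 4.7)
The plan is to reduce the test statistic $\hat\Psi(d)$ to the infinity norm of a sum of i.i.d.\ vectors, apply a high-dimensional Gaussian approximation to that sum, and then use anti-concentration to turn the distributional comparison into a uniform-in-$\alpha$ size bound.

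First, linearize. Write $\hat V_j = F_n(Y_j)$ and $V_j = F(Y_j)$. Under $H_0$, $V_j$ and $U_i := G(X_i) = F(X_i)$ are i.i.d.\ $U(0,1)$. For each $k\in[d]$, a second-order Taylor expansion of $\psi_k$ around $V_j$ gives
\[
\psi_k(\hat V_j) - \psi_k(V_j) = \psi_k'(V_j)\{F_n(Y_j)-F(Y_j)\} + \tfrac12 \psi_k''(\tilde V_j)\{F_n(Y_j)-F(Y_j)\}^2.
\]
Averaging over $j$ and applying the standard two-sample Hoeffding decomposition to $m^{-1}\sum_j \psi_k'(V_j)\{F_n(Y_j)-F(Y_j)\}$ shows that the leading term equals $-n^{-1}\sum_i\psi_k(U_i)$ up to a degenerate $U$-statistic remainder, so that
\[
\hat\psi_k \;=\; \frac{1}{m}\sum_{j=1}^m \psi_k(V_j) \;-\; \frac{1}{n}\sum_{i=1}^n \psi_k(U_i) \;+\; R_{n,m,k},
\]
where $R_{n,m,k}$ is controlled by $B_{1d}\cdot\|F_n-F\|_\infty^2$ plus a degenerate $U$-statistic whose variance involves $B_{1d}^2$ and a remainder bounded by $B_{2d}\cdot\|F_n-F\|_\infty^2$. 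Combining the DKW inequality $\|F_n-F\|_\infty = O_\P(n^{-1/2}(\log n)^{1/2})$ with Assumption~\ref{regularity.assumption} yields
\[
\Delta_{n,m} := \sqrt{\tfrac{nm}{n+m}}\,\max_{1\le k\le d}|R_{n,m,k}| = o_\P\{(\log n)^{-1/2}\}.
\]

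Second, approximate the Gaussian maximum. Let $S_k = \sqrt{nm/(n+m)}\{m^{-1}\sum_j\psi_k(V_j)-n^{-1}\sum_i\psi_k(U_i)\}$ and $\mathbf S = (S_1,\ldots,S_d)^\intercal$. Under $H_0$, $\mathbf S$ is a normalized sum of $N=n+m$ independent zero-mean vectors, whose covariance equals $\mathbf I_d$ by the orthonormality \eqref{orthonormality}. Each summand has $\ell_\infty$-norm bounded by $CB_{0d}/\sqrt N$, so the high-dimensional CLT of \cite{chernozhukov2013gaussian} (see their Proposition 2.1 / Corollary 2.1) gives
\[
\sup_{t\ge 0}\bigl|\P(|\mathbf S|_\infty\le t)-\P(|\mathbf G|_\infty\le t)\bigr| \;\le\; C\,\{(\log n)^7 B_{0d}^2/n\}^{1/6} \;\to\; 0,
\]
under Assumption~\ref{regularity.assumption}. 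Thus $|\mathbf S|_\infty$ is close in Kolmogorov distance to $|\mathbf G|_\infty$ with $\mathbf G\sim N(0,\mathbf I_d)$.

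Third, combine and use anti-concentration. By the two displays above, $\hat\Psi(d) = |\mathbf S|_\infty + \tilde\Delta_{n,m}$ where $|\tilde\Delta_{n,m}|\le \Delta_{n,m}$. The Gaussian anti-concentration inequality of \cite{chernozhukov2013gaussian} (Lemma~A.1) gives, for any $\varepsilon>0$,
\[
\sup_{t\in\br}\,\P\bigl(\bigl||\mathbf G|_\infty - t\bigr|\le \varepsilon\bigr)\;\le\; C\,\varepsilon\,\sqrt{\log d}.
\]
Setting $\varepsilon = \Delta_{n,m}$ (a sequence $o_\P((\log d)^{-1/2})$ by Assumption~\ref{regularity.assumption}) and combining with the CLT bound yields
\[
\sup_{t\ge 0}\bigl|\P_{H_0}\{\hat\Psi(d)\le t\}-\P(|\mathbf G|_\infty\le t)\bigr| \longrightarrow 0.
\]
Since $c_\alpha(d)$ is by definition the $(1-\alpha)$-quantile of $|\mathbf G|_\infty$, taking $t=c_\alpha(d)$ and passing to the supremum over $\alpha\in(0,1)$ gives \eqref{size.consist}.

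The main obstacle is tracking the remainder in the linearization step: the Taylor remainder depends on $B_{2d}$, the degenerate $U$-statistic piece depends on $B_{1d}$, and all three bounds $B_{0d},B_{1d},B_{2d}$ must simultaneously be small enough against the $(\log n)^{O(1)}$ factors introduced by the Gaussian anti-concentration step, which is exactly the content of Assumption~\ref{regularity.assumption}. Checking this balance carefully—rather than the high-dimensional CLT itself—is what makes the argument go through.
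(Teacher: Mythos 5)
Your proposal is correct and follows essentially the same route as the paper: a second-order Taylor expansion of $\psi_k$ at $V_j$, a Hoeffding decomposition of the resulting two-sample $U$-statistic whose first projection is $-n^{-1}\sum_i\psi_k(U_i)$, control of the remainders via DKW and the degenerate $U$-statistic tail, the high-dimensional Gaussian approximation of \cite{chernozhukov2013gaussian} for the i.i.d.\ sum, and Gaussian anti-concentration to absorb the remainder and pass to a uniform-in-$\alpha$ statement. The only differences are cosmetic (e.g.\ the exact exponent in the CLT rate and the paper's dilation trick for handling the absolute value inside the maximum), neither of which affects the argument.
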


\begin{remark}
{\rm
In view of \eqref{LP.bound} and \eqref{T.bound}, it follows from Theorem~\ref{asymptotic.size} that the error in size of the smooth test $\Phi^{{\rm S}}_\alpha(d)$ using the trigonometric series \eqref{trigonometric.series} (resp. Legendre polynomials series \eqref{Legendre.polynomials}) tends to zero provided that $d=o\{(n/\log n)^{1/4}\}$ (resp. $d=o\{(n/\log n)^{1/9}\}$) as $n \to \infty$. }
\end{remark}

Next we consider the asymptotic power of $\Phi^{{\rm S}}_{\alpha}(d)$ against local alternatives when $d=d_{n,m}\rightarrow \infty$ as $n, m\rightarrow \infty$. For the following results, let $\bar{n}=2( n^{-1}+ m^{-1})^{-1} $ denote the {\it harmonic mean} of the two sample sizes. Our oracle statistic $\Psi_{{\rm S}}$ given in \eqref{oracle.test} mimics $\s{ \bar{n}/2} \, \max_{1\leq k\leq d} |E_\theta\{\psi_k(V)\} |$,
where $\theta= (\theta_1, \ldots, \theta_d)^{\intercal} \in \br^d$. Consider testing $\tilde{H}_0: \rho \equiv 0$ in \eqref{equivalent.null} against the following local alternatives
\be
	H^d_{1} :  \rho = \rho_{\theta} ,  \, \mbox{ for } \theta \in \Theta := \bigg\{ b=(b_1,\ldots, b_d)^{\intercal} \in \br^d: \max_{1\leq k\leq d }|b_k | = \la \s{\frac{\log d}{\bar{n}}} \bigg\},  \label{local.alternative}
\ee
where $\rho_\theta$ is as in \eqref{alternatives} and $\la >0$ is a separation parameter. It is clear that the difficulty of testing between $\tilde{H}_0$ and $H^d_{1}$ depends on the value of $\la$; that is, the smaller $\la$ is, the harder it is to distinguish between the two hypotheses. The power of the test $\Phi^{{\rm S}}_\alpha(d)$ in \eqref{alpha.level.test} against $H^d_{1}$ is provided by the following theorem.

\begin{theorem} \label{asymptotic.power.1}
Suppose that Assumption~\ref{basic.assumption} holds. The truncation parameter $d=d_{n,m}$ is such that $d=o(n^{1/4})$ if the trigonometric series \eqref{trigonometric.series} is used to construct the test statistic $\hat{\Psi}(d)$ in \eqref{data-driven.test} and $d=o(n^{1/9})$ if the Legendre polynomials series \eqref{Legendre.polynomials} is used. Then, under $H^d_{1}$ with $\la \geq 2 +\varepsilon$ for some $\varepsilon>0$,
\be
	\lim_{n,d\rightarrow \infty} P_{  H^d_{1}}\big\{ \Phi_{\alpha}^{{\rm S}}(d) =1 \big\}  =1 .	\label{power.consist}
\ee
\end{theorem}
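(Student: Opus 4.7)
The plan is to lower-bound $\hat\Psi(d)$ by the single coordinate $k^{\ast}:=\arg\max_{1\le k\le d}|\theta_k|$, show that under $H_1^d$ this coordinate's expectation carries a signal of size $\lambda\sqrt{\log d/\bar n}$, and verify that after the normalization $\sqrt{nm/(n+m)}=\sqrt{\bar n/2}$ this signal exceeds $c_\alpha(d)\sim\sqrt{2\log d}$ by a diverging amount as soon as $\lambda>2$. The starting point is the triangle-inequality decomposition
$$
\hat\Psi(d) \;\ge\; \sqrt{\tfrac{\bar n}{2}}\,|\hat\psi_{k^{\ast}}| \;\ge\; \sqrt{\tfrac{\bar n}{2}}\bigl|E_\theta\{\psi_{k^{\ast}}(V)\}\bigr|-\sqrt{\tfrac{\bar n}{2}}\bigl|\hat\psi_{k^{\ast}}-E_\theta\{\psi_{k^{\ast}}(V)\}\bigr|,
$$
and I would control the two pieces in turn.

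For the \emph{signal} piece, a second-order Taylor expansion of $\exp\{\sum_\ell\theta_\ell\psi_\ell(z)\}$ combined with the orthonormality $\int_0^1\psi_k\psi_\ell\,dz=\delta_{k\ell}$ and the Parseval identity $\int_0^1(\sum_\ell\theta_\ell\psi_\ell)^2dz=\sum_\ell\theta_\ell^2$ gives $C_d(\theta)=1+o(1)$ and $E_\theta\{\psi_k(V)\}=\theta_k+r_k$ with $\max_k|r_k|\lesssim B_{0d}\,d\,(\max_\ell|\theta_\ell|)^2$. Plugging the $B_{0d}$ bounds of \eqref{LP.bound}--\eqref{T.bound} into the hypothesis $d=o(n^{1/4})$ (trigonometric) or $d=o(n^{1/9})$ (Legendre) yields $B_{0d}\,d\,\sqrt{\log d/\bar n}=o(1)$, and therefore $|E_\theta\{\psi_{k^{\ast}}(V)\}|=\lambda\sqrt{\log d/\bar n}(1+o(1))$. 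For the \emph{noise} piece, I would expand $\psi_{k^{\ast}}(\hat V_j)$ around $V_j=F(Y_j)$ by a second-order Taylor formula---the Hoeffding-type linearization underlying the proof of Proposition~\ref{thm.limiting.dist}---which writes $\hat\psi_{k^{\ast}}-E_\theta\{\psi_{k^{\ast}}(V)\}$ as the sum of two empirical averages (one in the $Y_j$'s and one in the $X_i$'s) of bounded centered i.i.d. variables, plus a remainder uniformly controlled by $B_{1d}$ and $B_{2d}$ that is $o_p(\bar n^{-1/2})$ under the same $d$-restrictions. A one-dimensional CLT at the single coordinate $k^{\ast}$ then yields $|\hat\psi_{k^{\ast}}-E_\theta\{\psi_{k^{\ast}}(V)\}|=O_p(\bar n^{-1/2})$, hence $\sqrt{\bar n/2}\,|\hat\psi_{k^{\ast}}-E_\theta\{\psi_{k^{\ast}}(V)\}|=O_p(1)$.

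A standard Mills'-ratio computation on $c_\alpha(d)=\Phi^{-1}\bigl(\tfrac12+\tfrac12(1-\alpha)^{1/d}\bigr)$ finally delivers $c_\alpha(d)=\sqrt{2\log d}\,(1+o(1))$ as $d\to\infty$. Inserting the previous two estimates,
$$
\hat\Psi(d)-c_\alpha(d) \;\ge\; \Bigl(\tfrac{\lambda}{\sqrt 2}-\sqrt 2\Bigr)\sqrt{\log d}\,(1+o(1))-O_p(1),
$$
and the hypothesis $\lambda\ge 2+\varepsilon$ makes the leading coefficient $(\lambda-2)/\sqrt 2\ge\varepsilon/\sqrt 2$ strictly positive, so the bound diverges to $+\infty$ in $P_{H_1^d}$-probability and \eqref{power.consist} follows. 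The principal obstacle will be the noise step: Proposition~\ref{thm.limiting.dist} operates under the null where $V\sim\mathrm{Unif}(0,1)$, whereas under $H_1^d$ the variable $V$ has density $\rho_\theta\not\equiv 1$, so the linearization remainders must be re-derived off the uniform. This is however essentially cosmetic, because the signal computation also shows $\|\rho_\theta-1\|_\infty=o(1)$, so $\rho_\theta$ is uniformly bounded away from zero and infinity and the moment and smoothness bounds on $\psi_k$, $\psi'_k$ and $\psi''_k$ that drive Proposition~\ref{thm.limiting.dist} transfer with only notational changes.
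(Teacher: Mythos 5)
Your proposal is correct and follows essentially the same route as the paper: reduce to the single coordinate $k^{\ast}=\arg\max_k|\theta_k|$, show via a Taylor expansion of the exponential family that $E_\theta\{\psi_{k^\ast}(V)\}=\theta_{k^\ast}(1+o(1))$ (the paper's Lemma~\ref{alternative.prop}), control the linearization remainders $R_{1k^\ast},R_{2k^\ast}$ under the stated growth conditions on $d$, and compare the normalized signal $(\lambda/\sqrt2)\sqrt{\log d}$ against $c_\alpha(d)=\sqrt{2\log d}\,(1+o(1))$ (which the paper bounds by Borell--TIS rather than a Mills-ratio computation, with the same conclusion). Your closing remark about recentering the Hoeffding decomposition off the uniform is exactly what the paper does by redefining $\xi_{jk}$ with $\vartheta_k=E_{H_1^d}\{\psi_k(V)\}$ and $h_{1k}(x)=E_{H_1^d}(\psi_k'(V)[I\{V\ge F(x)\}-V])$.
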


\subsection{Asymptotic properties of $\Phi^{{\rm MS}}_\alpha(d)$}
\label{multivariate.theory.sec}

In this section, we consider the multivariate case where the dimension $p=p_{n,m}$ is allowed to grow with sample sizes, and hence our results hold naturally for the fix dimension scenario. Specifically, we impose the following assumption on the quadruplet $(n,m,p,d)$.

\begin{assumption} \label{regularity.assumption.2}
{\rm
There exist constants $C_0, C_1>0$ and $c_1 \in (0, 1)$ such that
\be
  d\leq \min\{n, m, \exp(C_0 \, p)\} , \quad  \max \big( p^{7} B_{1d}^2  , p B_{2d}^2   \big) \leq C_1 \, n^{1-c_1}. \label{constraint.pd}
\ee
}
\end{assumption}

The next theorem establishes the validity of the multivariate smooth test $\Phi^{{\rm MS}}_\alpha(d)$.

\begin{theorem} \label{multivariate.asymptotic.size}
Suppose that Assumptions~\ref{basic.assumption} and \ref{regularity.assumption.2} hold. Then as $n, m \rightarrow \infty$,
\be
	 \sup_{0<\alpha< 1} \big|  P_{ H_{0}} \big\{ \Phi^{{\rm MS}}_{ \alpha}(d) =1 \big\} - \alpha \big| \rightarrow 0.	\label{size.consist}
\ee
\end{theorem}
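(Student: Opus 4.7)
The plan is to show that the rejection probability $P_{H_0}\{\hat\Psi_{\max}(d) \geq \hat c^{\rm MB}_\alpha(d)\}$ is uniformly close to $\alpha$ by matching the distribution of $\hat\Psi_{\max}(d)$ with the conditional distribution of $\hat\Psi^{\rm MB}_{\max}(d)$ through a common Gaussian target. I rely on the Gaussian approximation already established in Proposition~\ref{limiting.null.supremum.statistic}, which states that under $H_0$, $\hat\Psi_{\max}(d)\simeq_d \|\bG\|_{\mathcal{F}^p_d}$, where $\bG$ is the centered Gaussian process on $\mathcal{F}^p_d$ with covariance \eqref{sigma.ukvl}. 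It thus remains to prove that, on an event of high probability in $\{X_i\}$, the conditional law of $\hat\Psi^{\rm MB}_{\max}(d)$ given $\{X_i\}$ is close in Kolmogorov distance to the law of $\|\bG\|_{\mathcal{F}^p_d}$, and then to translate this into uniform quantile closeness via Gaussian anti-concentration.

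First, I would observe that conditional on $\{X_i\}_{i=1}^n$, the bootstrap statistic $\hat\Psi^{\rm MB}_{\max}(d)$ is exactly the supremum of a centered Gaussian process $\hat\bG$ indexed by $(u,k)\in\mathcal{S}^{p-1}\times[d]$ with covariance
$$
\hat\Sigma\bigl((u,k),(v,\ell)\bigr) = \frac{1}{n}\sn \psi_k(\hat U^u_i)\,\psi_\ell(\hat U^v_i),
$$
since it is linear in the i.i.d.\ standard normal multipliers $\{e_i\}$. The central task is to bound the sup-norm deviation $\Delta_n := \sup|\hat\Sigma - \Sigma|$ at a polynomial-in-$n$ rate. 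I split $\hat\Sigma - \Sigma$ into the empirical-average deviation (where $\hat U^u_i$ is replaced by $U^u_i = F^u(u^\intercal X_i)$) and the plug-in deviation caused by using $F^u_n$ in place of $F^u$. The first component is controlled by a uniform concentration bound over the VC-type class $\{x\mapsto \psi_k(F^u(u^\intercal x))\psi_\ell(F^v(v^\intercal x))\}$, whose envelope is of order $B_{0d}^2$ and whose covering numbers are polynomial thanks to the fact that the class of half-spaces on $\br^p$ has VC dimension $p+1$. The second component, via the mean-value theorem, is bounded by $2 B_{0d} B_{1d}\cdot \sup_{u,t}|F^u_n(t)-F^u(t)|$, and the latter is $O_P(\sqrt{p/n})$ by a uniform Glivenko-Cantelli bound over the half-space VC class; Assumption~\ref{regularity.assumption.2} then guarantees $\Delta_n = O_P(n^{-c})$ for some $c>0$.

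With $\Delta_n$ controlled, I would apply a Gaussian comparison result of Chernozhukov-Chetverikov-Kato type, which yields that the Kolmogorov distance between the laws of $\|\bG\|_{\mathcal{F}^p_d}$ and $\|\hat\bG\|_{\mathcal{F}^p_d} = \hat\Psi^{\rm MB}_{\max}(d)$ is at most $C\,\Delta_n^{1/3}\{1\vee \log(pd)\}^{2/3}$, which tends to zero under \eqref{constraint.pd} and the exponential constraint $d\leq \exp(C_0 p)$. Combining this with Proposition~\ref{limiting.null.supremum.statistic} shows that the Kolmogorov distance between the law of $\hat\Psi_{\max}(d)$ and the conditional law of $\hat\Psi^{\rm MB}_{\max}(d)$ vanishes, and the Gaussian anti-concentration inequality for $\|\bG\|_{\mathcal{F}^p_d}$ translates this into uniform quantile closeness, yielding $\sup_{\alpha\in(0,1)}|P_{H_0}\{\hat\Psi_{\max}(d)\geq \hat c^{\rm MB}_\alpha(d)\}-\alpha| \to 0$.

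The main obstacle is the uniform control of the plug-in error entering the covariance estimate, since $B_{1d}$ grows polynomially in $d$ and $F^u_n - F^u$ must be controlled simultaneously over the uncountable sphere $\mathcal{S}^{p-1}$ with $p$ allowed to diverge. The specific growth rates in Assumption~\ref{regularity.assumption.2}, particularly the factor $p^{7} B_{1d}^2 \leq C_1 n^{1-c_1}$, are calibrated exactly to balance this empirical-process fluctuation against the logarithmic penalty $\log(pd)$ that arises from the Gaussian comparison and anti-concentration steps.
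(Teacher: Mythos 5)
Your overall architecture is the same as the paper's: Theorem~\ref{multivariate.asymptotic.size} is obtained by combining a Gaussian approximation for $\hat{\Psi}_{\max}(d)$ (Proposition~\ref{limiting.null.supremum.statistic}, which you correctly take as given), a bootstrap consistency statement, and Gaussian anti-concentration to convert Kolmogorov-distance closeness into uniform quantile closeness. Where you diverge is in the bootstrap half. The paper (Proposition~\ref{multiplier.bootstrap.theorem}) Taylor-expands $\psi_k(\hat U^u_i)$ around $U^u_i$, controls the resulting degenerate $U$-process by a Nolan--Pollard moment bound over a VC-type class and the remainder by Borell--TIS plus Lemma~\ref{negligibility.EDF}, thereby reducing $\hat\Psi^{\rm MB}_{\max}$ to the multiplier process over the \emph{fixed} class $\mathcal{F}^p_d$ evaluated at the true $F^u$; it then invokes the coupling theorem of \cite{chernozhukov2013anti} (Theorem~A.2), which is tailored to multiplier processes over VC-type classes and internally absorbs the discrepancy between the empirical and population covariances. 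You instead exploit the exact conditional Gaussianity of the bootstrap process and propose a direct sup-norm comparison of the conditional covariance $\hat\Sigma$ with \eqref{sigma.ukvl}, followed by a Gaussian comparison inequality. Your decomposition of $\hat\Sigma-\Sigma$ and the bound $2B_{0d}B_{1d}\sup_{u,t}|F^u_n(t)-F^u(t)|$ for the plug-in part are fine.

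The gap is in the Gaussian comparison step. The inequality you invoke, with error $C\,\Delta_n^{1/3}\{1\vee\log(pd)\}^{2/3}$, is the finite-dimensional maxima version, whereas the index set here is the uncountable $\mathcal{S}^{p-1}\times[d]$. Two problems follow. First, the complexity parameter is miscalibrated: after discretizing the sphere the effective log-cardinality is of order $p+\log d$ (exponentially many directions in $p$), not $\log(pd)$; this is exactly the factor appearing in the paper's anti-concentration bound \eqref{anti-concentration.bound}, and since $p$ is allowed to diverge the distinction matters for verifying that the error vanishes under \eqref{constraint.pd}. Second, and more seriously, passing from the $\epsilon$-net to the full supremum requires controlling the increments of \emph{both} Gaussian processes between net points, i.e.\ bounding $E\{(\bG f_{u,k}-\bG f_{v,k})^2\}=\int \{\psi_k(F^u(u^{\intercal}x))-\psi_k(F^v(v^{\intercal}x))\}^2\,dF(x)$ as a function of $|u-v|$. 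No assumption in the paper gives a quantitative modulus of continuity of $u\mapsto F^u(u^{\intercal}X)$ (no density or anti-concentration condition on the projections $u^{\intercal}X$ is imposed), so this discretization error cannot be closed as stated; only covering numbers of the class are available, which is precisely why the paper routes through the abstract VC-type coupling rather than an explicit covariance comparison. To make your route rigorous you would need either such a continuity assumption or an infinite-dimensional comparison/coupling theorem phrased in terms of uniform entropy -- at which point you are essentially back to the paper's argument.
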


\section{Numerical studies}   \label{numerical.sec}

In this section, we illustrate the finite sample performance of the proposed smooth tests described in Sections~\ref{method.sec} and \ref{multivariate.sec} via Monte Carlo simulations. The univariate and the multivariate cases will be studied separately.

\subsection{Univariate case}

Proposition~\ref{thm.limiting.dist} in Section~\ref{proof.section} shows that the distribution of the test statistic $\hat{\Psi}(d)$ in \eqref{data-driven.test} can be consistently estimated by that of the absolute Gaussian maximum $|\mathbf{G}|_\infty$, where $\mathbf{G} =_d N(0, \mathbf{I}_d) $. To see how close this approximation is, we compare in Figure~\ref{size.figure} the cumulative distribution function of $|\mathbf{G}|_\infty$ and the empirical distributions of $\hat{\Psi}(d)$ using the trigonometric series \eqref{trigonometric.series} and the Legendre polynomial (LP) series \eqref{Legendre.polynomials}, when the data are generated from Student's $t(7)$-distribution, with $n=180, m=150$ and $d=12$. We only present the upper half of the curve since the $(1-\alpha)$ quantile of $|\mathbf{G}|_\infty$ with $\alpha\in (0,1/2)$ is of particular interest. It can be seen from Figure~\ref{size.figure} that the cumulative distribution curves of $|\mathbf{G}|_\infty$ and the trigonometric series based statistic, denoted by T-$\hat{\Psi}(d)$, almost coincide, while there is a slightly noticeable difference between $|\mathbf{G}|_\infty$ and the LP polynomial series based statistic LP-$\hat{\Psi}(d)$. Indeed, this phenomenon can be expected from the theoretical discoveries. See, for example, the rate of convergence in \eqref{c123n} and \eqref{gassian.approxi.1}, where dependence of $\{B_{\ell d} \}_{ \ell=0, 1, 2}$ on $d$ can be found in \eqref{LP.bound} and \eqref{T.bound}.

\begin{figure}[h]
\centering
\caption{ Comparison of the empirical cumulative distributions of LP-$\hat{\Psi}(12)$, T-$\hat{\Psi}(12)$ and the limiting cumulative distribution with $n=180$ and $m=150$. The plot is based on 5000 simulations.}
\label{size.figure}
\includegraphics[trim = 35mm 85mm 35mm 90mm, clip,width=5.2in,height=8cm]{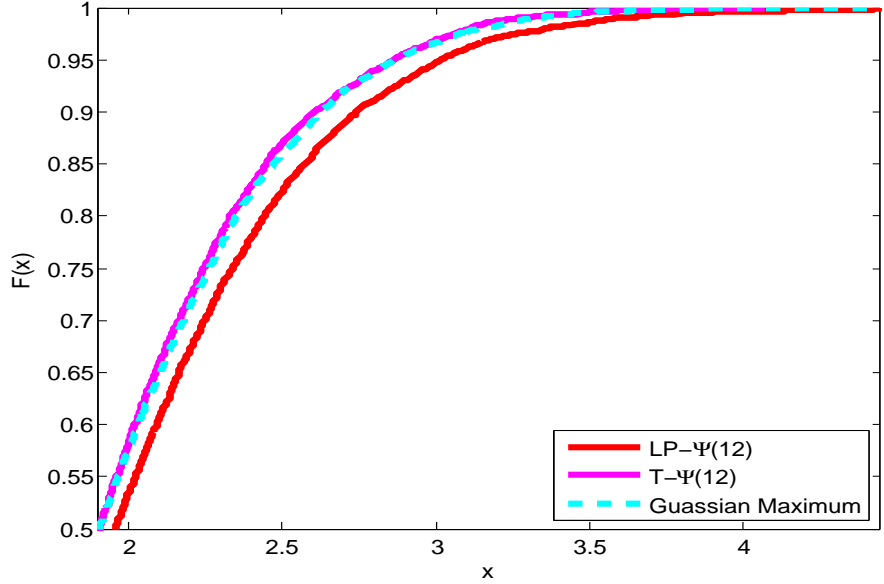}
\end{figure}

Next, we carry out 5000 simulations with nominal significance level $\alpha=0.05$ to calculate the empirical sizes of the proposed smooth test $\Phi_\alpha^{{\rm S}}(d)$. We denote with T-$\Phi_\alpha^{{\rm S}}(d)$ and LP-$\Phi_\alpha^{{\rm S}}(d)$, respectively, the tests based on the trigonometric series \eqref{trigonometric.series} and the LP polynomial series \eqref{Legendre.polynomials}. The sample sizes $(n,m)$ are taken to be (80,60), (120,90), (180,150), and $d$ takes values $4, 8, 12$. We compare the proposed smooth test with the testing procedure proposed by \cite{Bera_Ghosh_Xiao_2013}, the two-sample Kolmogorov-Smirnov test and the two-sample Cram\'er-von Mises test in five examples when the data are generated from Gamma, Logistic, Gaussian, Pareto and Stable distributions. The results are summarized in Table \ref{size.table}, from which we see that among all the five examples considered, the empirical sizes of T-$\Phi^{{\rm S}}_\alpha(d)$ with $d\in \{ 4,8, 12\}$ are close to 0.05. This highlights the robustness of the testing procedure T-$\Phi^{{\rm S}}_\alpha(d)$ with respect to the choice of the truncation parameter $d$. Further, we note that the empirical sizes of LP-$\Phi^{{\rm S}}_\alpha(4)$ are comparable to those of $\Phi_{{\rm BGX}}$, while as $d$ increases, the test LP-$\Phi^{{\rm S}}_\alpha(d)$ suffers from size distortion gradually. In fact, as pointed out by \cite{Neyman_1937} and \cite{Bera_Ghosh_2002}, when the Legendre polynomials series is used to construct the test statistic, the effectiveness of the corresponding test in each direction could be diluted if $d$ is too large. Nevertheless, the test based on the trigonometric series remains to be efficient as $d$ increases and can be very powerful as we shall see later.

\begin{table}[h]
\centering
\caption{\label{size}Comparison of empirical sizes with nominal significance level $\alpha = 0.05$}
\label{size.table}
\footnotesize{\begin{tabular}{llcccccccccc}
\toprule
&~~~&\multicolumn{3}{c}{$\text{T-}{\Phi}_{\alpha}^{\text{S}}(d)$}&&\multicolumn{3}{c}{$\text{LP-}{\Phi}_{\alpha}^{\text{S}}(d)$}&BGX&KS&CVM \\

\cline{3-5}  \cline{7-9}\vspace{-0.3cm}\\
Model&$(n,m)$&$d=4$&$d=8$&$d=12$&&$d=4$&$d=8$&$d=12$&\\
\midrule
Gamma(2,2)&$(80,60)$&0.0504 &0.0500 &0.0490 	&&0.0584 &0.0724 &0.1078 &0.0634 &0.0494 &0.0524 \\

 &$(120,90)$&0.0504 &0.0510 &0.0484 && 0.0542 &0.0654 &0.0830 &0.0590 &0.0438	&0.0434		
\\
 &$(180,150)$&0.0496	&0.0486	&0.0484		&&0.0500	&0.0554	&0.0706	&0.0530	&0.0440	&0.0444\\
\midrule
\midrule
Logistic(0,1)&$(80,60)$&0.0498 	&0.0496 	&0.0482 	&&0.0576 	&0.0748	&0.1038	&0.0618	&0.0456&	0.0494
\\
 &$(120,90)$&0.0504 	&0.0500 	&0.0496 		&&0.0528 	&0.0666 	&0.0860 	&0.0552 	&0.0504	&0.0466
\\
 &$(180,150)$&0.0502	&0.0498	&0.0500		&&0.0508	&0.0570	&0.0696	&0.0574	&0.0438	&0.0424
\\
\midrule
\midrule
N(0,1)&$(80,60)$&0.0504 	&0.0488 	&0.0470 		&&0.0570 	&0.0764 	&0.1060 	&0.0648 	&0.0494	 &0.0504
\\
 &$(120,90)$&0.0502 	&0.0494 	&0.0482 		&&0.0548 	&0.0694 	&0.0850 	&0.0566 	&0.053	&0.0504
\\
 &$(180,150)$&0.0502	&0.0514	&0.0516	&&0.0504	&0.0544	 &0.0616	&0.0542	&0.0446	&0.0500
\\
\midrule
\midrule
 Pareto(0.5,1,1)&$(80,60)$&0.0502 &0.0484 &0.0468 	&&0.0582 &0.0766 	&0.1064 	&0.0640 	&0.0460   &0.0494
\\
 &$(120,90)$&0.0500 	&0.0494 	&0.0494  &&0.0540 	&0.0640 	&0.0824 	&0.0592 	&0.0468	&0.0480
\\
 &$(180,150)$&0.0498	&0.0496	&0.0500		&&0.0530	&0.0586	&0.0724	&0.0542	&0.0436	&0.0456
\\
\midrule
\midrule
 Stable(1.5,0,1,1)&$(80,60)$&0.0480 	&0.0470 	&0.0456 	&&0.0544 	&0.0758 	&0.1088 	&0.0606	&0.0474	&0.0488
\\
 &$(120,90)$&0.0496 	&0.0498 	&0.0494 		&&0.0578 	&0.0692 	&0.0790 	&0.0614 	&0.0492	 &0.0514
\\
 &$(180,150)$&0.0510  &0.0514	&0.0506		&&0.0508	&0.0570	 &0.0690	&0.0608	 &0.0490 &0.0484
\\
\bottomrule
\end{tabular}}
\end{table}\par

The power performance is evaluated through the following five examples. In each example, the result reported is based on 1000 simulations where samples sizes $(n,m)$ are taken to be $(120,90)$ and $(180,150)$. Because of the distortion of empirical sizes of LP-$\Phi_\alpha^{{\rm S}}(d)$, we only compare the power of the trigonometric series based smooth test T-$\Phi_\alpha^{{\rm S}}(d)$ with that of the KS, CVM and BGX tests. The plots of power functions against different families of alternative distributions from Examples \ref{eg1}--\ref{eg5} are given in Figure \ref{Power-compare}.

\begin{Example}\label{eg1}
\begin{align*}
& X: F =\text{uniform}\,(-1,1) \ \ \mbox{versus } \\
  & Y: G =G_\mu \ \ \mbox{ with density } \ \ g_\mu(x) =\frac{1}{2}+2x \frac{\mu-|x|}{\mu^2}  I(|x|<\mu) \quad (0 \le \mu\le 1).
\end{align*}
\end{Example}

\begin{Example}\label{eg2}
\begin{align*}
& X: F =\text{uniform}\,(-1,1) \ \ \mbox{ versus}  \\
& Y: G =G_\sigma \ \ \mbox{ with density }  \ \  g_\sigma(x) =\frac{1}{2}\{1+\sin(2\pi\sigma x)\} \quad (0.5\le \sigma \le 5).
\end{align*}
\end{Example}

\begin{Example}\label{eg3}
\begin{align*}
& X: F  =\text{lognormal}\,(0,1) \ \ \mbox{ with density} \ \  f(x)=(2\pi)^{-1/2}x^{-1}\exp\{-(\log x)^2/2\}\ \ \mbox{ versus } \\
   & Y: G =G_a  \ \ \mbox{ with density} \ \  g_a(x) =f(x)\{1+a\sin(2\pi \log x)\} \ \  (-1\le a \le 1).
\end{align*}
\end{Example}

\begin{Example}\label{eg4}
\begin{align*}
& X: F =\text{uniform}\,(0,1)  \ \  \mbox{ versus } \\
& Y: G =G_c \ \ \mbox{ with density } \ \  g_c(x) =\exp\{ c\sin(5\pi x)\} \ \ (0\le c \le 2).
\end{align*}
\end{Example}

\begin{Example}\label{eg5}
\begin{align*}
& X: F =\text{uniform}\,(0,1)	\ \  \mbox{ versus } \\
& Y: G =G_c \ \ \mbox{ with density } \ \   g_c(x) =1+c\cos(5\pi x) \ \  (0\le c \le 2).
\end{align*}
\end{Example}

The first two examples are, respectively, Examples 5 and 6 in \cite{Fan_1996} which were designed to demonstrate the performance of the adaptive Neyman's test proposed there. In Example~\ref{eg1}, when $\mu=0$, $G_\mu$ coincides with $F$. For this family of alternatives index by $\mu$, the strength of the local feature depends on $\mu$ in the sense that the larger the $\mu$, the stronger the local feature. As expected, the powers of all the tests considered grow with $\mu$ and when sample sizes are large enough, the smooth tests T-$\Phi^{{\rm S}}_\alpha(d)$ uniformly outperform the others. Example~\ref{eg2}, on the other hand, is designed to test the global features with various frequencies. It can be seen from the second row in Figure~\ref{Power-compare} that the test T-$\Phi^{{\rm S}}_\alpha(16)$ has the highest power that approaches to 1 rapidly as $\sigma$ decreases to 0. The third example is from \cite{Heyde_1963}, where $g_a$ is a density and has the same moments as $f_0$ of any order. In this example, all the KS, CVM and BGX tests suffer from very poor power, while surprisingly, the smooth tests based on the trigonometric series remain powerful. The last two examples aim to cover the high-frequency alternations. Again, the proposed tests have the highest powers. In fact, the BGX test was originally constructed to identify deviations in the directions of mean, variance, skewness and kurtosis, and hence it can be relatively less powerful in detecting local features or high-frequency components.


\begin{figure}[!htbp]
\centering
\caption{Empirical powers for Examples~\ref{eg1}--\ref{eg5} based on 1000 replications with $\alpha=0.05$}
\label{Power-compare}
\begin{tabular}{cc}
\includegraphics[trim = 35mm 85mm 35mm 80mm, clip,width=2.8in,height=4.2cm]{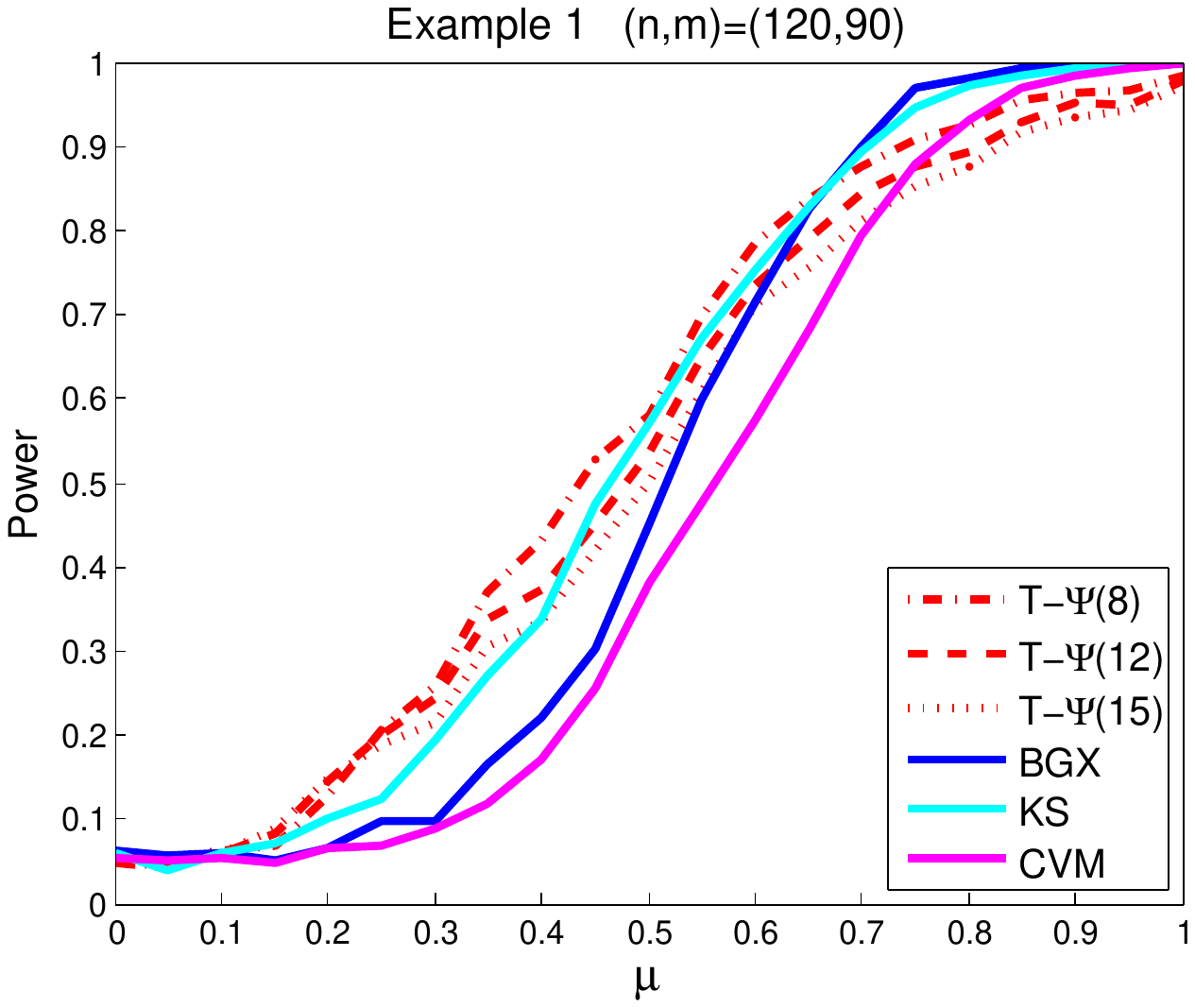}
&\includegraphics[trim = 35mm 85mm 35mm 80mm, clip,width=2.8in,height=4.2cm]{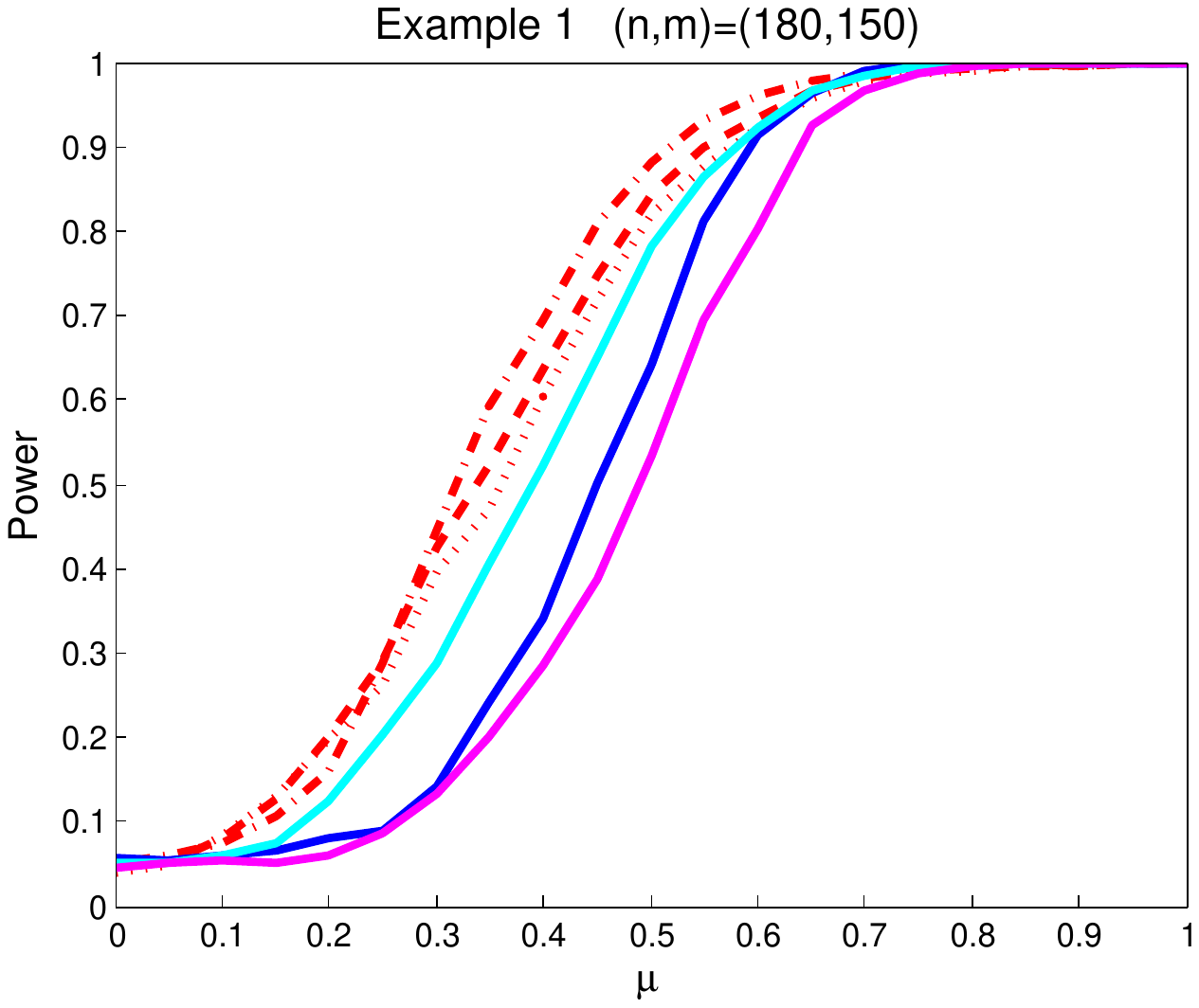}\\
\includegraphics[trim = 35mm 85mm 35mm 80mm, clip,width=2.8in,height=4.2cm]{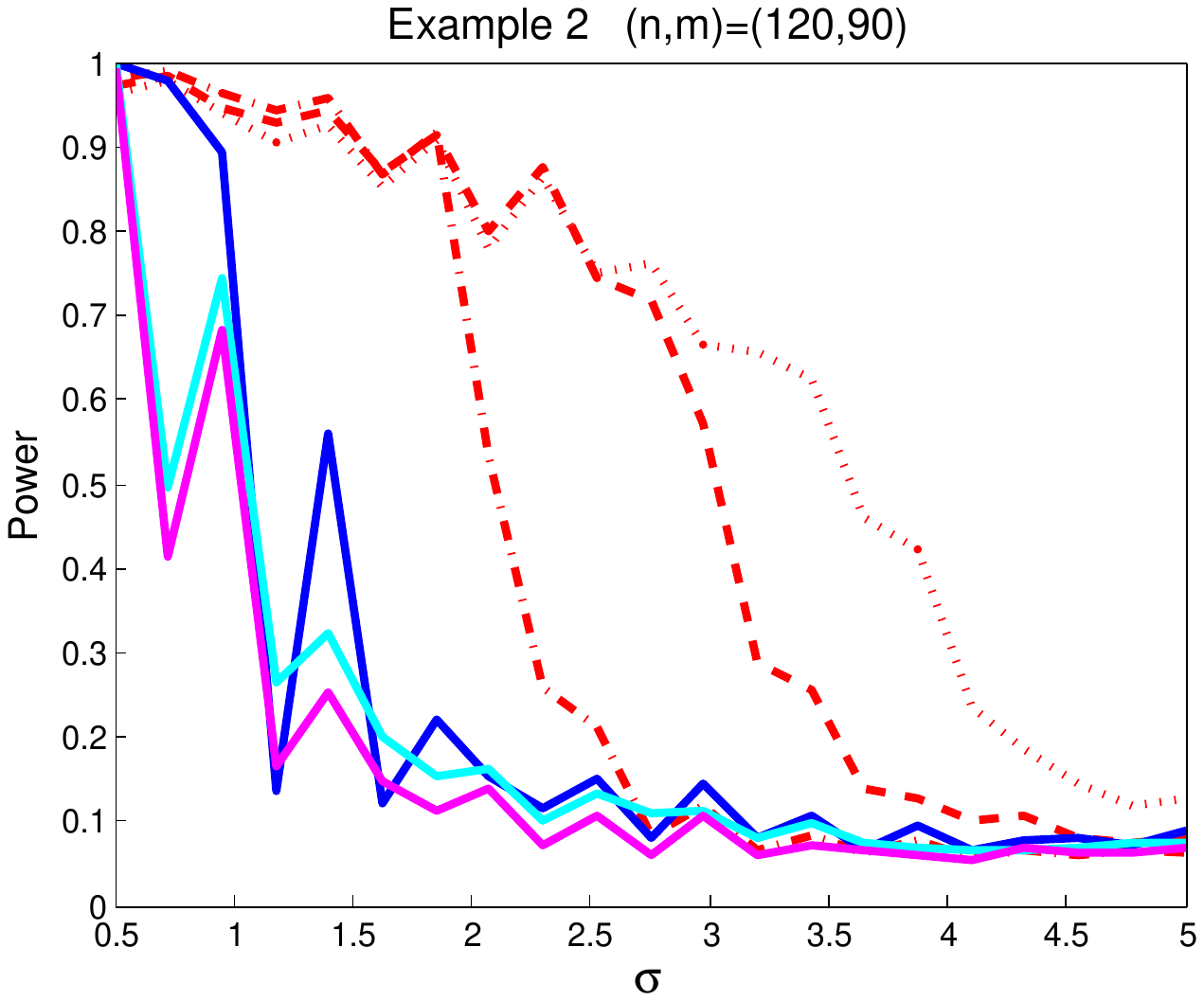}
&\includegraphics[trim = 35mm 85mm 35mm 80mm, clip,width=2.8in,height=4.2cm]{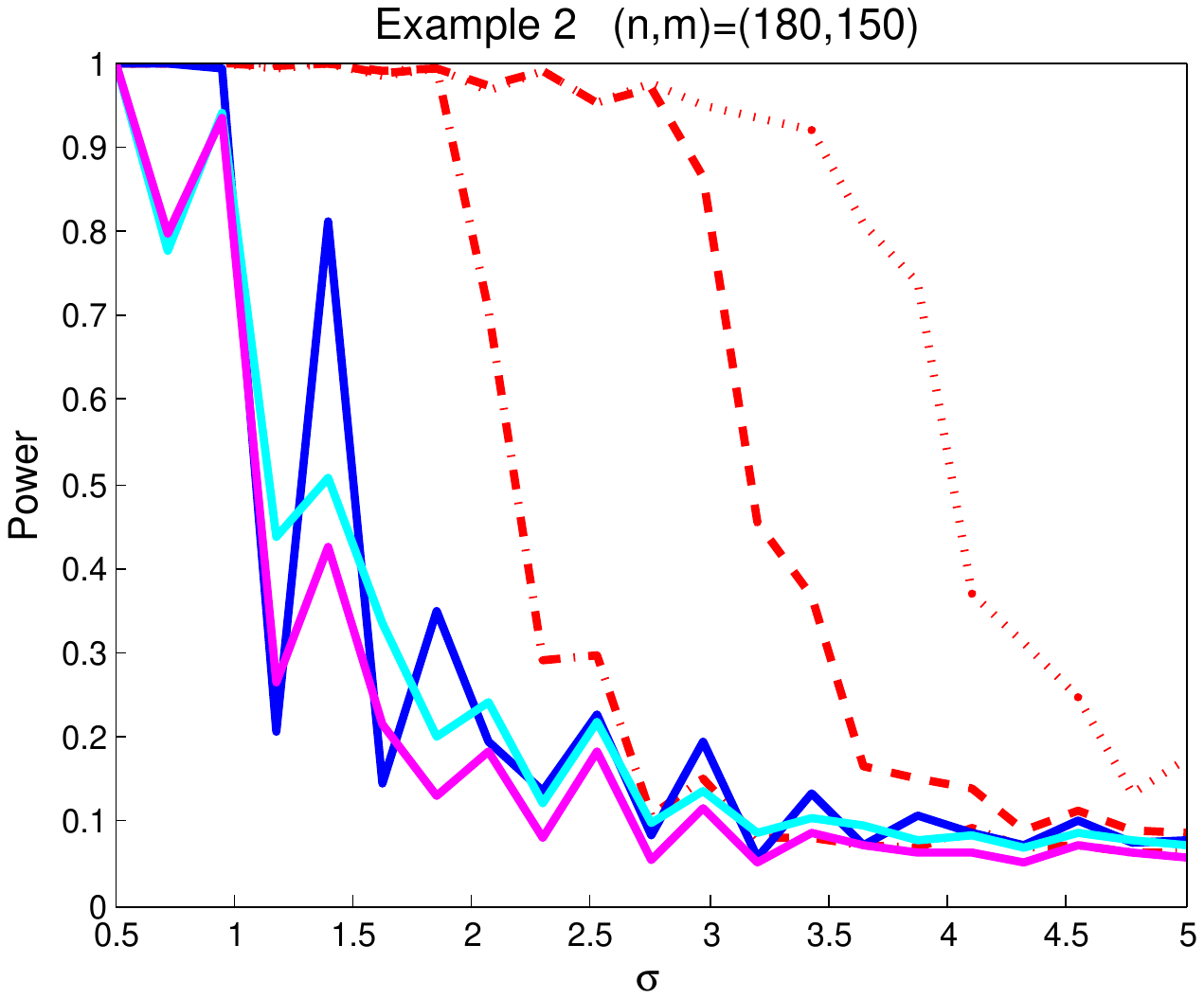}\\
\includegraphics[trim = 35mm 85mm 35mm 80mm, clip,width=2.8in,height=4.2cm]{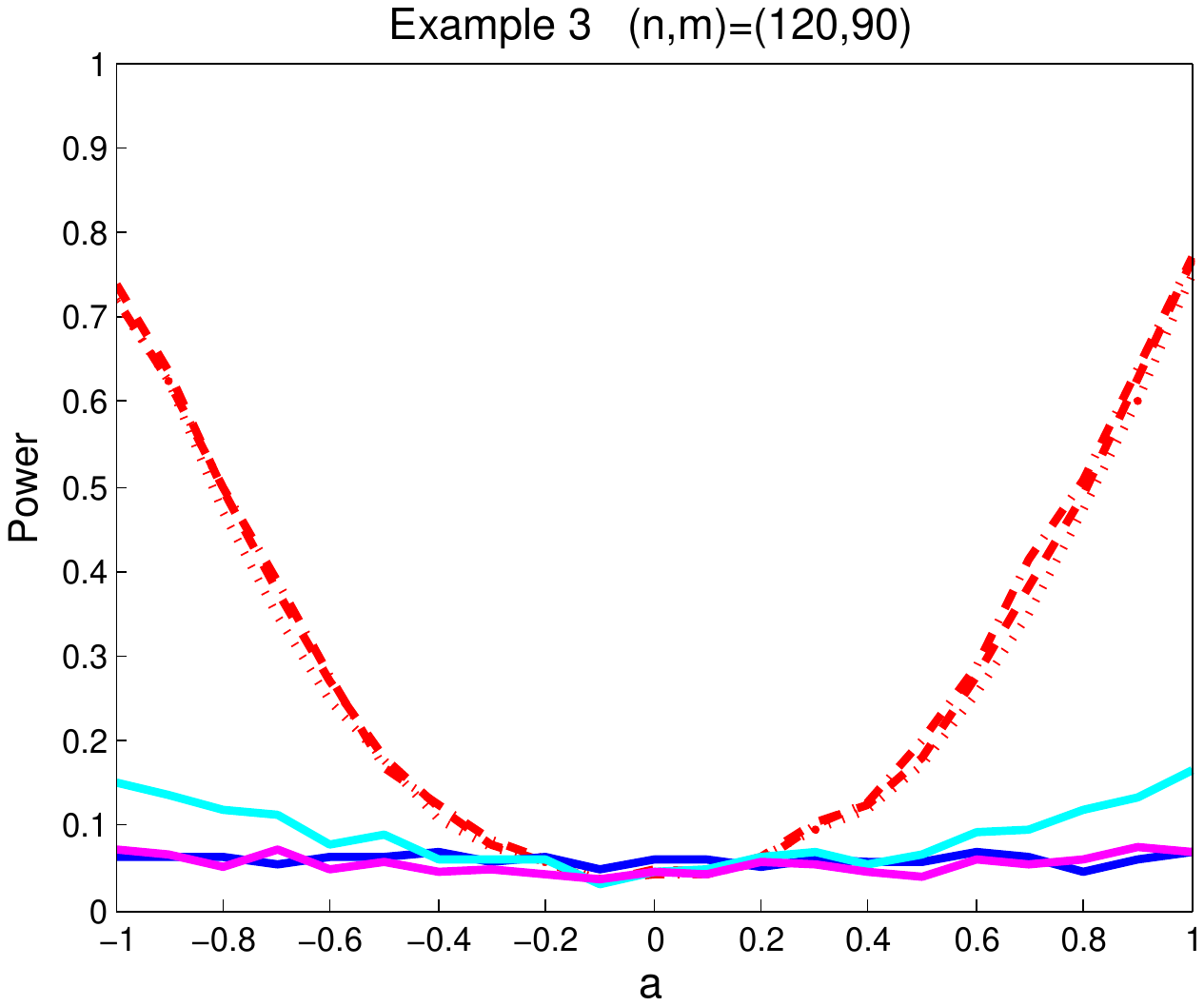}
&\includegraphics[trim = 35mm 85mm 35mm 80mm, clip,width=2.8in,height=4.2cm]{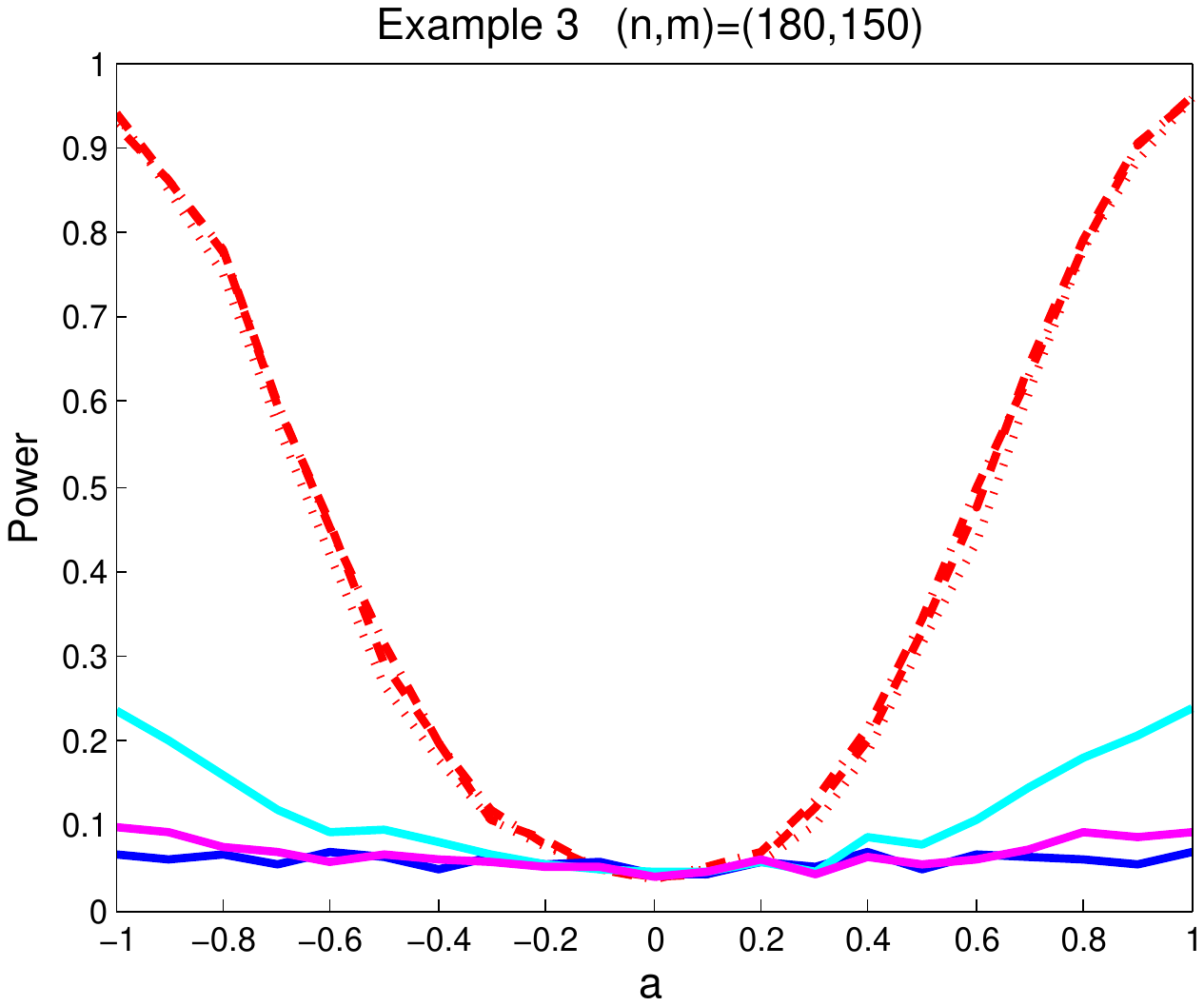}\\
\includegraphics[trim = 35mm 85mm 35mm 80mm, clip,width=2.8in,height=4.2cm]{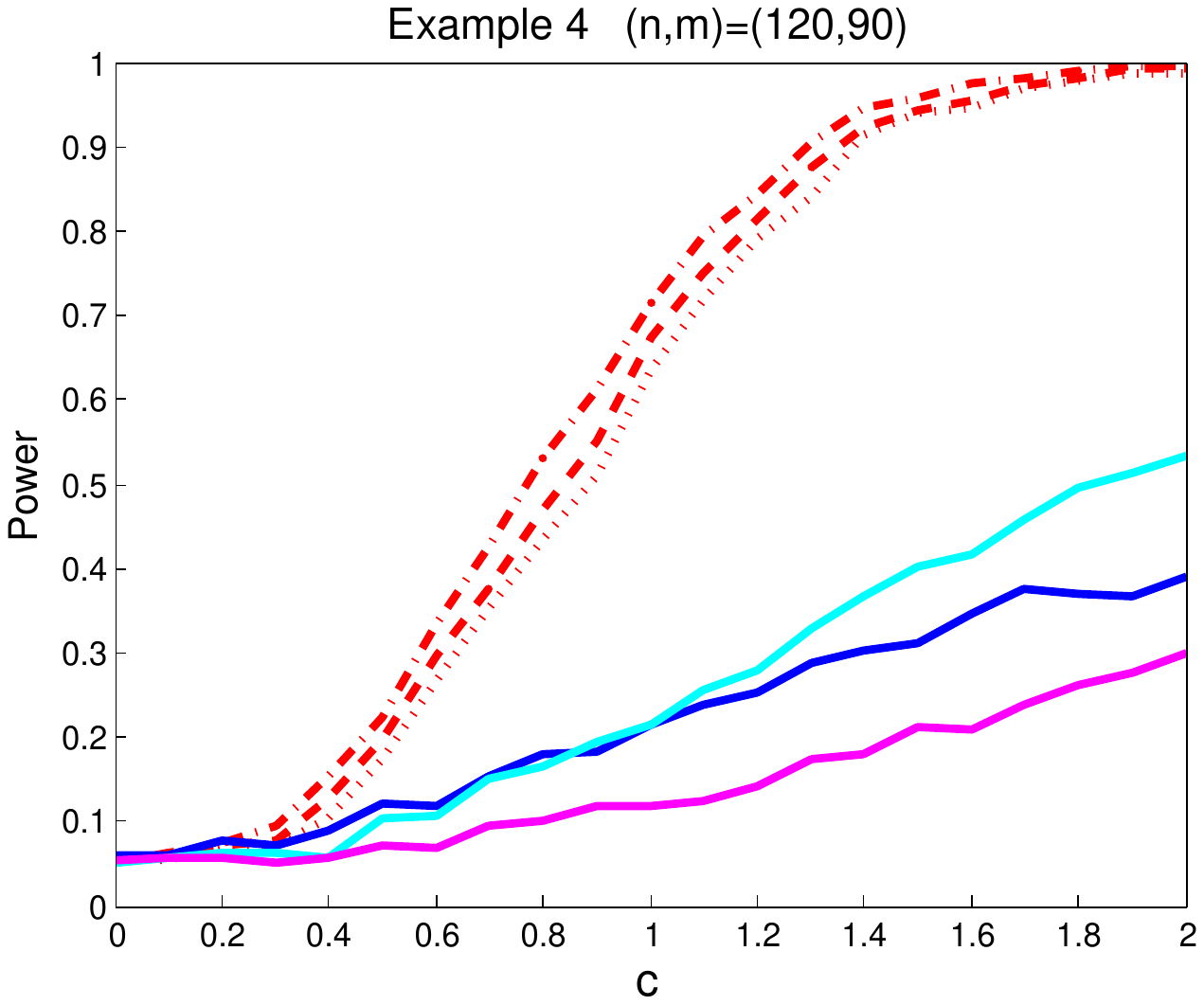}
&\includegraphics[trim = 35mm 85mm 35mm 80mm, clip,width=2.8in,height=4.2cm]{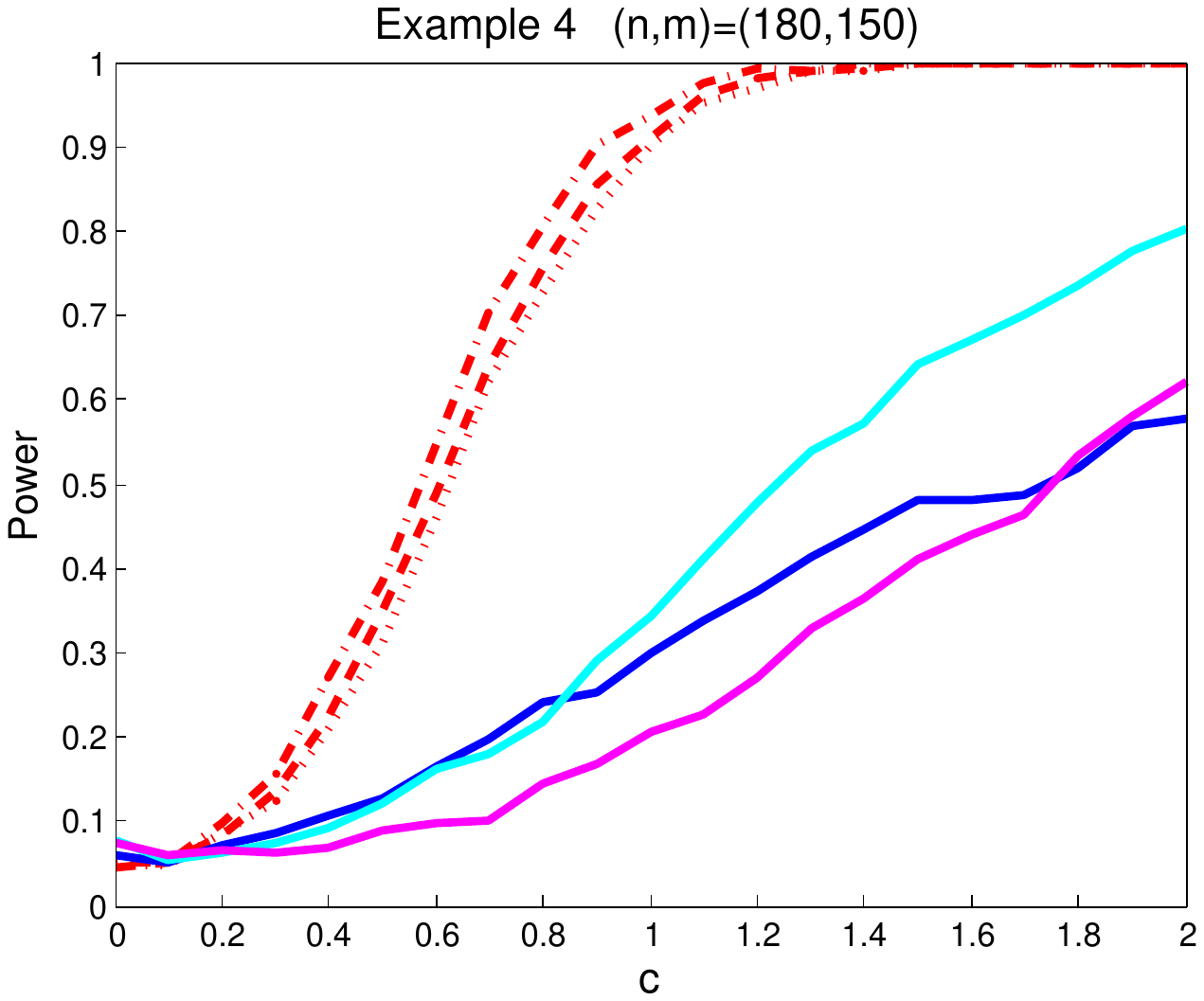}\\
\includegraphics[trim = 35mm 85mm 35mm 80mm, clip,width=2.8in,height=4.2cm]{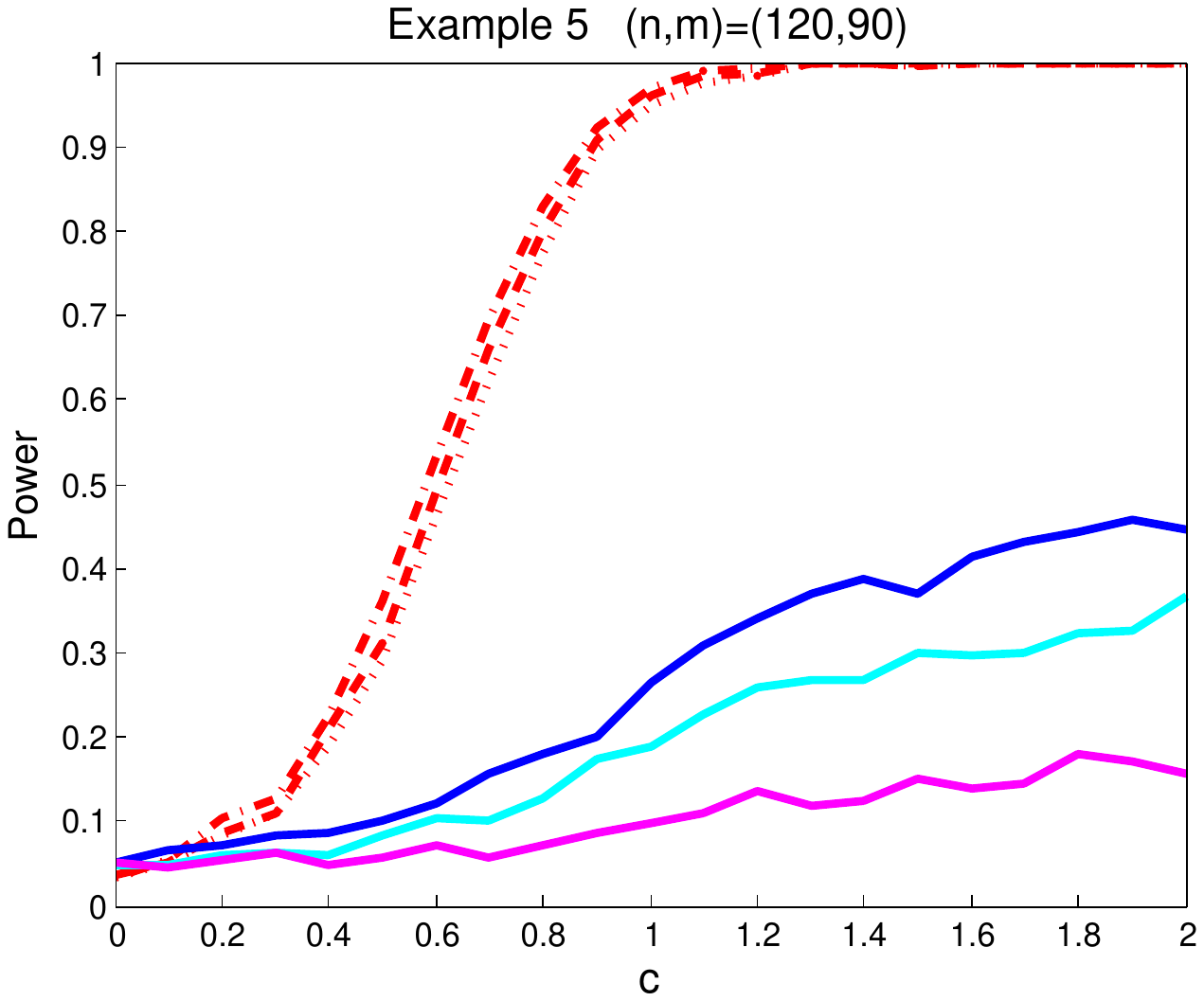}
&\includegraphics[trim = 35mm 85mm 35mm 80mm, clip,width=2.8in,height=4.2cm]{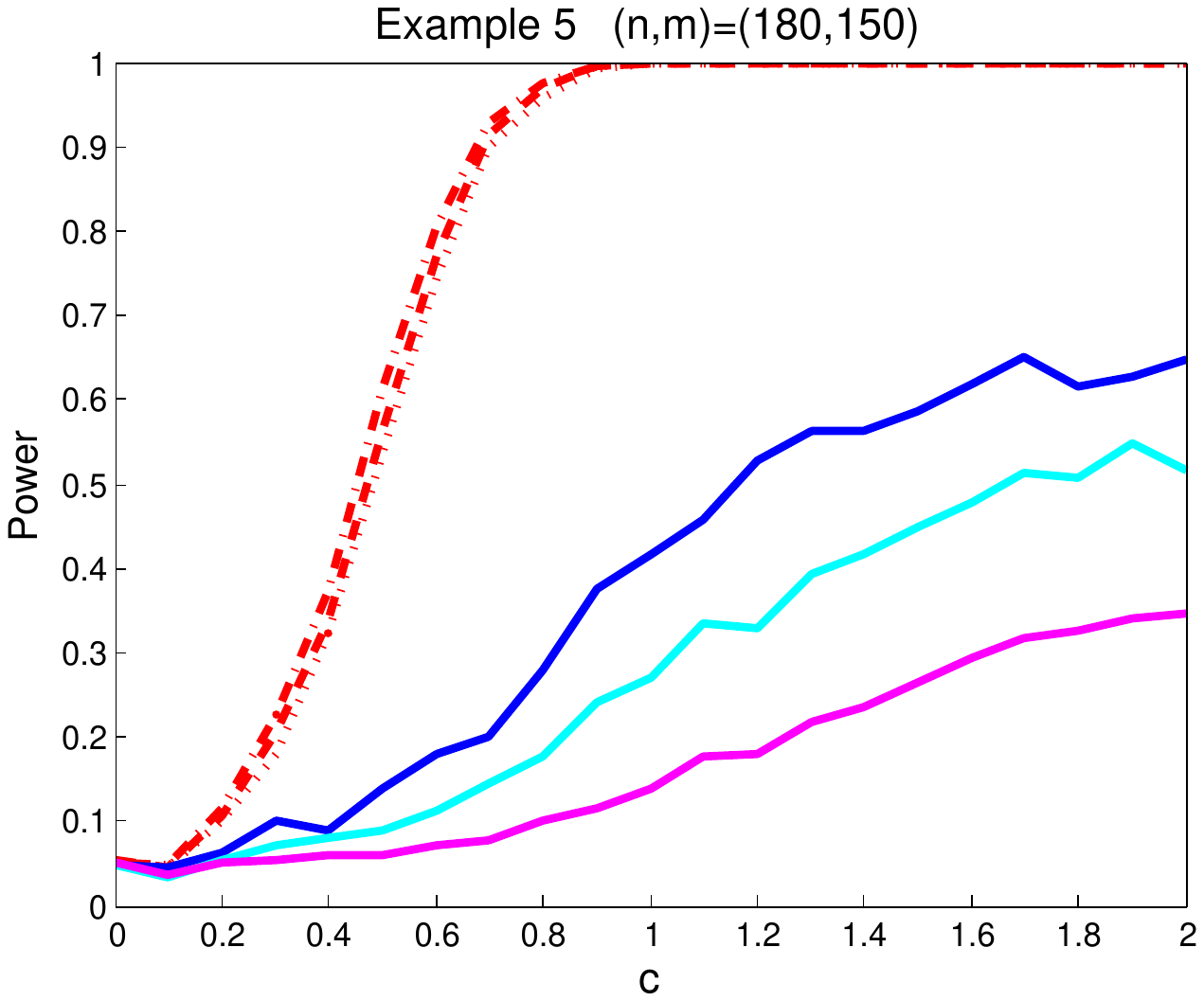}\\
\end{tabular}
\end{figure}

\subsection{Multivariate case}

The computation of the proposed multivariate smooth test and the critical value requires to find optimal directions $\hat{u}_{\max}$ and $\hat{u}_{\max}^{{\rm MB}}$ on the unit sphere $\mathcal{S}^{p-1}$ that maximize non-smooth objective functions (\ref{multivariate.test.statistic}) and (\ref{bootstrap.statistic}), respectively. To solve these optimization problems, we convert the data into spherical coordinates and employ the Nelder-Mead algorithm. As a trade-off between the power and the computational feasibility of the test, we keep the value of $d$ fixed at 4.

Similar to the univariate case, we first carry out 5000 simulations with nominal significance level $\alpha=0.05$ to calculate the empirical sizes of the proposed test T-$\Phi_\alpha^{{\rm MS}}(d)$ with trigonometric series. For each $p \in \{3,5,10\}$, the data are generated from multivariate normal and $t$-distributions with different degrees of freedom ($4$ and $8$) and covariance structures ($I_p$ and $\Sigma$). Sample sizes $(n,m)$ are taken to be (180,160). We summarize the results in Table \ref{multi.size.table}, comparing with the method proposed by \cite{Baringhaus_Franz_2004}, which will be referred as the BF test. From Table~\ref{multi.size.table} we see that when $p=3,5$, both methods have an empirical size fairly close to 0.05; when $p=10$, the empirical size of the proposed smooth test increases since the optimization over the unit sphere becomes more challenging, while the empirical size of the BF test is typically smaller than the nominal level.

\begin{table}[h]
\centering
\caption{\label{size}Empirical size with significance level $\alpha=0.05$. }
\label{multi.size.table}
\begin{tabular}{lcccccccc}
\toprule
&&\multicolumn{2}{c}{Multivariate Normal}&&
\multicolumn{4}{c}{Multivariate t}\\
\cline{3-4}\cline{6-9}\vspace{-0.2cm}\\
&&$N(0,I_p)$ &$N(0,\Sigma)$&&$t_4(0,I_p)$&$t_8(0,I_p)$&$t_4(0,\Sigma)$&$t_8(0,\Sigma)$\\
 \midrule
\multirow{ 2}{*}{$p =3$}&$\text{T-}{\Phi}_{\alpha}^{\text{MS}}(d)$&$0.0446$ &$0.0456$&~&$0.0514$&$0.0442$&$0.0494$ &$0.0458$\\
&BF&$0.0480$ &$0.0494$&~&$0.0504$&$0.0488$&$0.0448$ &$0.0484$\\
\midrule
\multirow{ 2}{*}{$p =5$}&$\text{T-}{\Phi}_{\alpha}^{\text{MS}}(d)$&$0.0496$&$0.0472$&~&$0.0494$&$0.0560$&$0.0450$&$0.0514$\\
&BF&$0.0466$&$0.0472$&&$0.0502$&$0.0458$&$0.0488$&$0.0484$\\
\midrule
\multirow{ 2}{*}{$p =10$}&$\text{T-}{\Phi}_{\alpha}^{\text{MS}}(d)$&$0.0582$&$0.0594$&~&$0.0512$&$0.0516$&$0.0570$&$0.0602$\\
&BF&$0.0422$&$0.0454$&&$0.0364$&$0.0422$&$0.0482$&$0.0438$\\
\bottomrule
\end{tabular}
\end{table}

The power performance of the multivariate smooth test is evaluated through Examples \ref{eg6}--\ref{eg9}. The first two are multivariate versions of Examples \ref{eg1} and \ref{eg4}, which demonstrate, respectively, the alternations with local feature and high frequency. The last two examples are designed to examine a rotation effect in the alternations.  In each one, the power reported is based on 1000 simulations where samples sizes $(n,m)$ are taken to be $(180,160)$. Again, we compare the power of the trigonometric series based smooth test T-$\Phi_\alpha^{{\rm S}}(d)$ with that of the BF test. The power curve are depicted in Figure~\ref{multi.Power.compare}.

\begin{Example}\label{eg6}
\begin{align*}
&X=(X_1,X_2,X_3)^{\intercal}, \ \  X_1,X_2 \overset{iid}{\sim} \text{uniform}\,(-1,1), \ \  X_3=0.3X_1+0.7X_2 \ \ \mbox{versus} \\
&Y=(Y_1, Y_2, Y_3)^{\intercal}, \ \ Y_1, Y_2\overset{iid}{\sim} g_\mu(x) =\frac{1}{2}+2x \frac{\mu-|x|}{\mu^2}  I(|x|<\mu)  \, (0 \le \mu\le 1)\\
&Y_3=0.3Y_1+0.7Y_2.
\end{align*}
\end{Example}

\begin{Example}\label{eg7}
\begin{align*}
&X=(X_1,X_2,X_3)^{\intercal}, \ \ X_1,X_2 \overset{iid}{\sim} \text{uniform}\,(0,1), \ \  X_3=0.3X_1+0.7X_2  \ \  \mbox{versus} \\
&Y=(Y_1, Y_2, Y_3)^{\intercal}, \ \ Y_1, Y_2\overset{iid}{\sim} g_c(x) =\exp\{ c\sin(5\pi x)\} \, (0\le c \le 2) , \, Y_3=0.3Y_1+0.7Y_2.
\end{align*}
\end{Example}

\begin{Example}\label{eg8}
\begin{align*}
& X\sim \mathcal{N}(0, I_5) \ \ \mbox{versus}  \ \  Y=AZ, \  \ Z\sim \mathcal{N}(0, I_5) ,  \\
&  \mbox{ where } A=\left(\begin{array}{cc}
A_0& 0\\
0& I_{3}
\end{array}\right),  \  \   A_0 = \left(\begin{array}{cc}
\sqrt{1-\delta}& \sqrt{\delta}\\
\sqrt{\delta}& \sqrt{1-\delta}
\end{array}\right) \,   (0\le \delta \le 0.5).
\end{align*}
\end{Example}

\begin{Example}\label{eg9}
\begin{align*}
& X\sim t_4(0, I_5) \ \ \mbox{versus}  \  \  Y=AZ, \quad Z\sim t_4(0, I_5) ,   \\
& \mbox{ where }
 A=\left(\begin{array}{cc}
A_0& 0\\
0& I_{3}
\end{array}\right), \  \ A_0 = \left(\begin{array}{cc}
\sqrt{1-\delta}& \sqrt{\delta}\\
\sqrt{\delta}& \sqrt{1-\delta}
\end{array}\right) \,  (0\le \delta \le 0.5).
\end{align*}
\end{Example}

\begin{figure}[!htbp]
\centering
\caption{Empirical powers for Examples~\ref{eg6}--\ref{eg9} based on 1000 replications with $\alpha=0.05$}
\label{multi.Power.compare}
\begin{tabular}{cc}
\includegraphics[trim = 35mm 85mm 40mm 80mm, clip,width=2.8in,height=4.5cm]{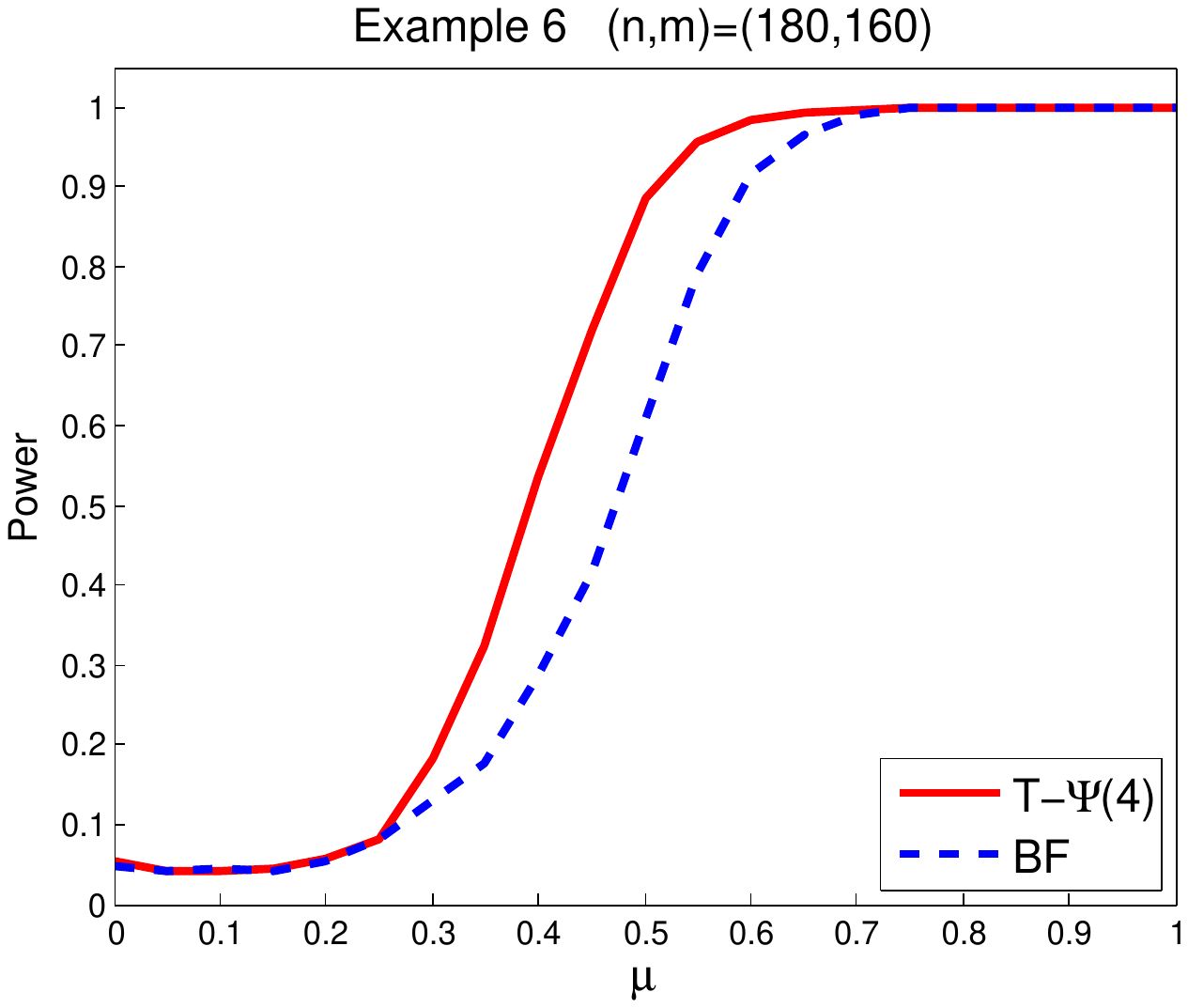}
&\includegraphics[trim = 40mm 85mm 35mm 80mm, clip,width=2.8in,height=4.5cm]{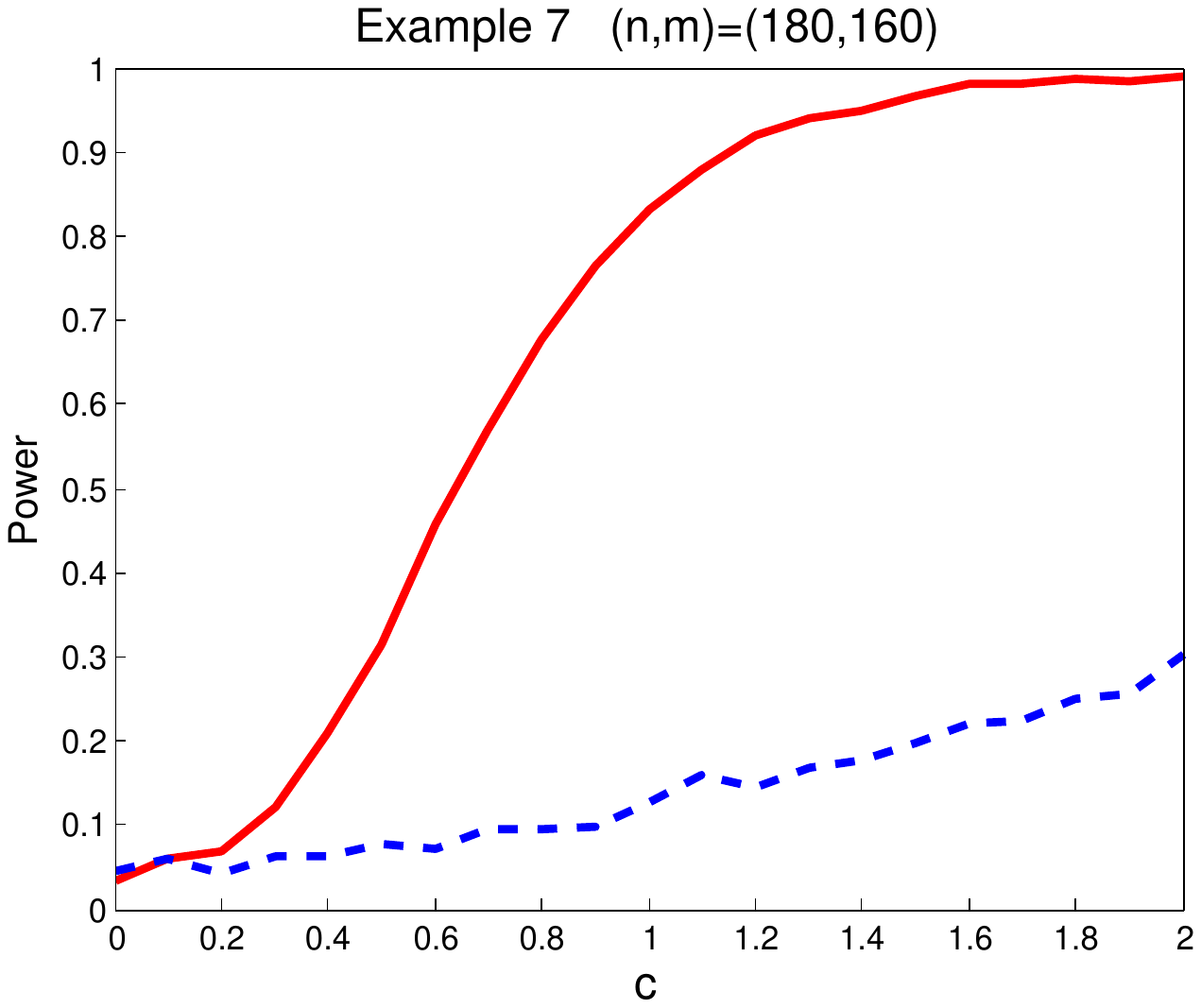}\\
\includegraphics[trim = 35mm 85mm 40mm 80mm, clip,width=2.8in,height=4.5cm]{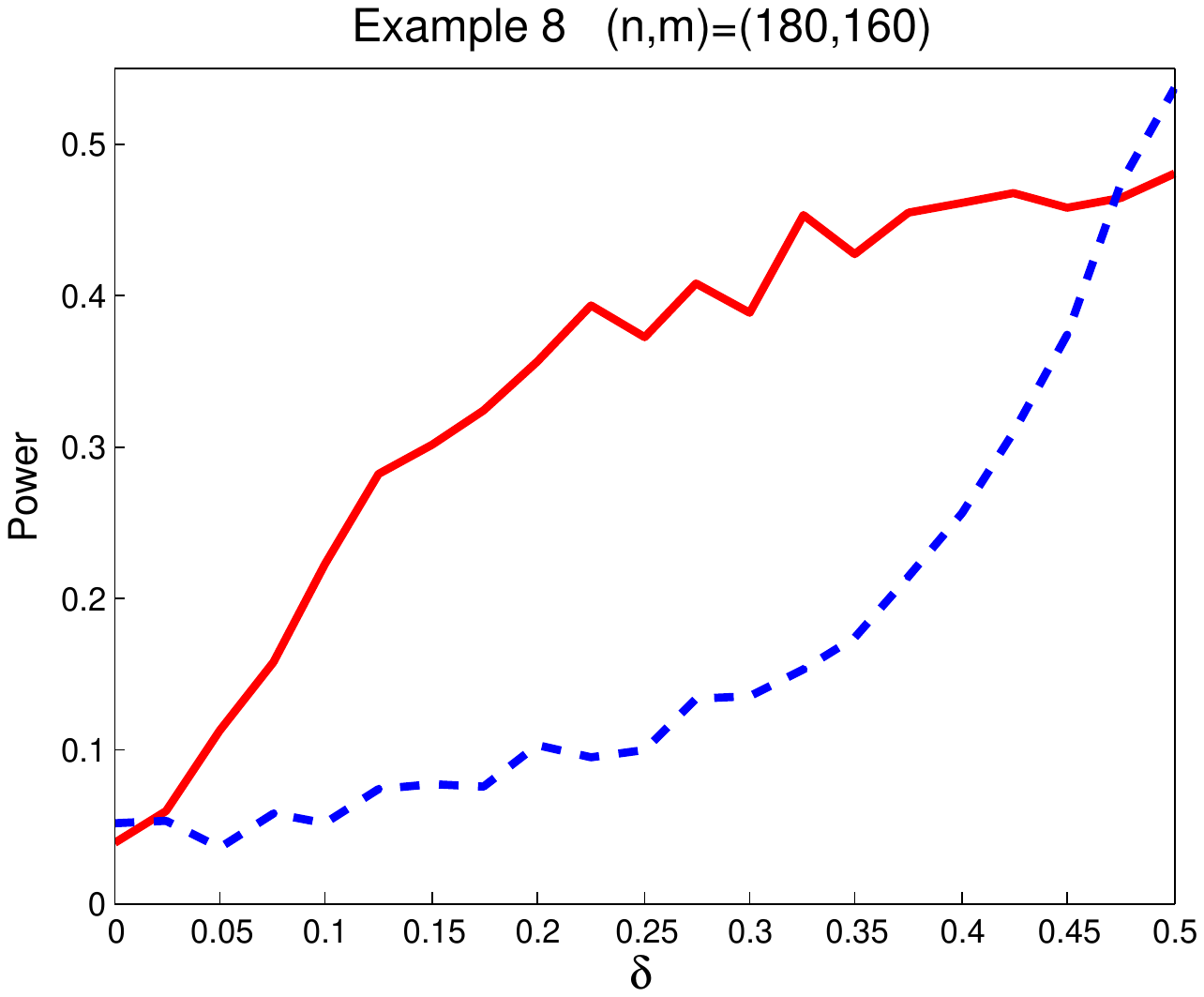}
&\includegraphics[trim = 40mm 85mm 35mm 80mm, clip,width=2.8in,height=4.5cm]{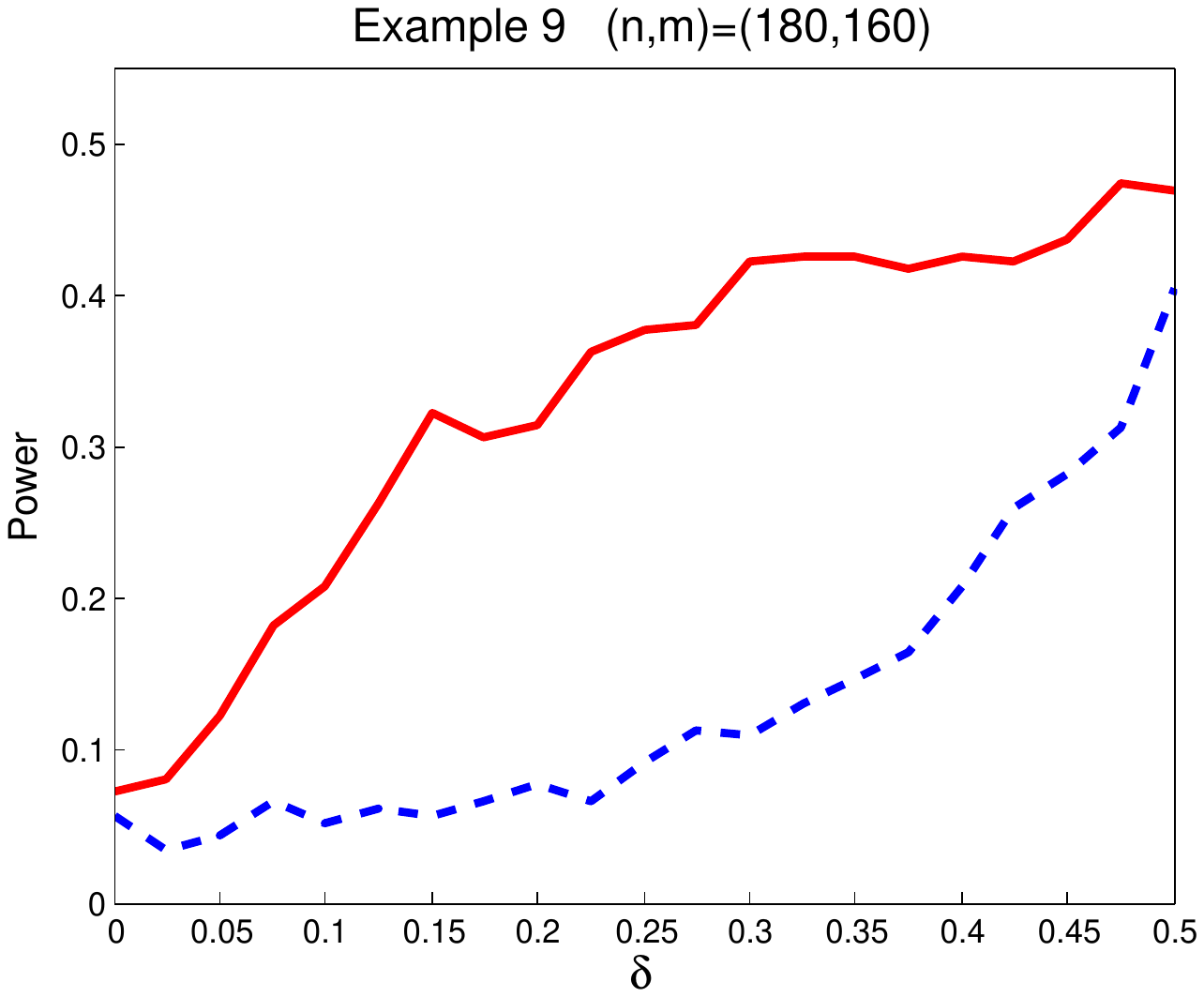}\\
\end{tabular}
\end{figure}

Figure \ref{multi.Power.compare} shows that the proposed smooth test uniformly outperforms the BF test in all the examples in terms of power. Since we are using trigonometric series, the test is powerful especially if the data contains high frequency components (Example \ref{eg7}), which is difficult to be detected by the BF test.

\section{Discussion}

We introduced in this paper a smooth test for the equality of two unknown distributions, which is shown to maintain the pre-specified significance level asymptotically. Moreover, it was shown theoretically and numerically that the test is especially powerful in detecting local features or high-frequency components.


The proposed procedure depends on a user-specific parameter $d$, which is the number of orthogonal directions used to construct the test statistic. Theoretically, the size of $d$ is allowed to grow with $n$ and can be as large as $o(n^c)$ for some $0<c<1$. Since the optimal value of $d$ depends on how far the two unknown distributions deviate from each other, it is not possible to practically define an optimal choice of $d$. As suggested by our numerical studies, $d=10$ is a reasonable choice when the sample sizes are in the order of $10^2$, which leads a good compromise between the computational cost and the performance of the test. Alternatively, a data-driven approach based on a modification of Schwarz's rule was proposed by  \cite{inglot1997data}, that is, $\hat{d} =\argmax_{1\le d \le D(n,m)}\{T(d) - d \log(n+m)\}$ for some $D(n,m)\rightarrow \infty$ as $\min(n,m) \to \infty$, where $T(d)$ is the test statistic using the first $d$ orthonormal functions. This principal can be applied to the proposed testing procedure by setting $D(n,m)$ to be some large value, say $20$. Nevertheless, the optimal choice of $D(n,m)$ remains unclear.


The computation of the multivariate test statistic $\hat{\Psi}_{\max}(d)$ requires solving the optimization problem with an $\ell_2$-norm constraint. To solve this problem when the dimension $p$ is relatively small, we first convert the data into spherical coordinates and then use the Nelder-Mead algorithm. An interesting extension is to combine our method with the smoothing technique as in \cite{H92}. Let $K:\br \mapsto \br$ be a symmetric, bounded density function. For a predetermined small number $h=h_n>0$, $\hat{\psi}_{u,k}$ is approximated by a continuous function $\hat{\psi}_{u,k,h} = m^{-1}\sm  \psi_k(\hat{V}^u_{j,h})$, where
\be
	\hat{V}^u_{j,h}= \frac{1}{n}\sn \mathcal{K} \bigg\{  \frac{ u^{\intercal}( Y_j- X_i)}{h } \bigg\} \ \ \mbox{ with } \ \ \mathcal{K}(t)=\int_{-\infty}^t K(z)\, dz.   \nn 
\ee
As $h  \rightarrow 0$, $\hat{V}^u_{j,h}$ converges to $\hat{V}^u_j$ almost surely, and hence for each $k\in [d]$, $ \sup_{u\in \mathcal{S}^{p-1}}| \hat{\psi}_{k,u,h}|$ is a smoothed version of $ \sup_{u\in \mathcal{S}^{p-1}}| \hat{\psi}_{u,k}|$. The smoothing technique can be similarly applied to the multiplier bootstrap statistic. Consequently, we can employ the gradient descent algorithm to solve the optimization for smooth functions. We leave a thorough comparison of various algorithms for different values of $p$ as an interesting problem for future research.

\section{Proof of the main results}
\label{proof.section}

In this section we prove Theorems~\ref{asymptotic.size}--\ref{multivariate.asymptotic.size}. Proofs of the lemmas and some additional technical arguments are given in the Appendix. Throughout this section, we write $N=n+m$ and use $C$ and $c$ to denote absolute positive constants, which may take different values at each occurrence.  We write $a \lesssim b$ if $a$ is smaller than or equal to $b$ up to an absolute positive constant, and $a \gtrsim b$ if $b\lesssim a$.

\subsection{Proof of Theorem~\ref{asymptotic.size}}

Recall that $\mathbf{G}=(G_1,\ldots, G_d)^{\intercal}$ is a $d$-dimensional standard Gaussian random vector, the distribution of $|\mathbf{G}|_\infty$ is absolute continuous so that $P\{ |\mathbf{G}|_\infty \geq  c_\alpha(d) \}=\alpha$. Therefore, under the assumption that $d\leq n \wedge m$, the conclusion \eqref{size.consist} follows from the following proposition immediately.

\medskip
\begin{proposition} \label{thm.limiting.dist}
Assume that the conditions of Theorem~\ref{asymptotic.size} hold and let
\be
	\gamma_{0n} =\frac{ (\log n)^{7/8}}{n^{1/8}} B_{0d}   , \ \ \gamma_{1n}  =\frac{ (\log d)^{3/2}}{\s{n}}  B_{1d} , \ \ \gamma_{2n}  = \s{\frac{\log d}{n}}  B_{2d} . \label{c123n}
\ee
Then under $H_0:F=G$,
\begin{align}
	   \sup_{t \geq 0}  \big| P\big\{  \hat \Psi(d) \leq t \big\}  - P\big( |\mathbf{G}|_\infty   \leq  t \big) \big| \lesssim    \gamma_{0n}  + \s{\gamma_{1n}} + \s{\gamma_{2n } } . \label{gassian.approxi.1}
\end{align}
\end{proposition}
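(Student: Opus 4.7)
The plan is to (i) linearize $\hat{\psi}_k$ by combining a first-order Taylor expansion in $\hat{V}_j - V_j$ with a Hoeffding-type projection of the resulting bilinear form, (ii) apply the high-dimensional central limit theorem of Chernozhukov--Chetverikov--Kato (CCK) to the leading linear sum, and (iii) absorb the quadratic Taylor term and the degenerate Hoeffding remainder into the Kolmogorov distance via Nazarov's Gaussian anti-concentration inequality, which will produce the $\gamma_{0n}$, $\sqrt{\gamma_{2n}}$, and $\sqrt{\gamma_{1n}}$ components respectively.

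For the linearization I would Taylor expand
\begin{equation*}
\psi_k(\hat{V}_j) = \psi_k(V_j) + \psi_k'(V_j)(\hat{V}_j - V_j) + \tfrac{1}{2}\psi_k''(\xi_{jk})(\hat{V}_j - V_j)^2
\end{equation*}
with $\xi_{jk}$ between $V_j$ and $\hat{V}_j$. Under $H_0$, set $U_i = F(X_i)$ and $V_j = F(Y_j)$ (both uniform), so that $\hat{V}_j - V_j = n^{-1}\sum_i[I(U_i \le V_j) - V_j]$ and the linear cross term becomes the double sum $R_k := (nm)^{-1}\sum_{i,j}\psi_k'(V_j)[I(U_i \le V_j) - V_j]$. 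Integration by parts combined with the orthonormality $\int_0^1 \psi_k = 0$ yields the identity $E_V[\psi_k'(V)(I(u \le V) - V)] = -\psi_k(u)$, which is exactly the Hoeffding projection of $R_k$ onto the $U_i$ coordinate. Hence
\begin{equation*}
\hat{\psi}_k = \frac{1}{m}\sum_j \psi_k(V_j) - \frac{1}{n}\sum_i \psi_k(U_i) + D_k + Q_k,
\end{equation*}
with $D_k$ a centred degenerate double sum and $Q_k$ the averaged quadratic Taylor remainder. Setting $T_k := \sqrt{nm/N}[m^{-1}\sum_j \psi_k(V_j) - n^{-1}\sum_i \psi_k(U_i)]$, the vector $T = (T_1,\ldots,T_d)^\intercal$ is a normalized sum of $N$ independent centred summands whose coordinates are bounded by $B_{0d}$, and orthonormality makes $\mathrm{Cov}(T) = \mathbf{I}_d$ exactly. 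The CCK max-coordinate Berry--Esseen bound then gives $\sup_t|P(|T|_\infty \le t) - P(|\mathbf{G}|_\infty \le t)| \lesssim \gamma_{0n}$.

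To pass from $|T|_\infty$ to $\hat{\Psi}(d) = |T|_\infty + \sqrt{nm/N}\,O(\max_k(|D_k|+|Q_k|))$, I would use the standard device: for any $\varepsilon > 0$, the Kolmogorov distance inflates by at most $P(\max_k\sqrt{N}|D_k + Q_k| > \varepsilon) + C\varepsilon\sqrt{\log d}$ (the second term is Nazarov's anti-concentration inequality applied to $|\mathbf{G}|_\infty$). For $Q_k$, the pointwise bound $|Q_k| \le \tfrac{1}{2}B_{2d}\,m^{-1}\sum_j(\hat{V}_j - V_j)^2$ together with $E(\hat{V}_j - V_j)^2 \lesssim 1/n$ yields $E\sqrt{N}\max_k|Q_k| \lesssim B_{2d}/\sqrt{n}$; balancing $\varepsilon$ against Markov then delivers the $\sqrt{\gamma_{2n}}$ contribution. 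For the degenerate sum $D_k$, Hoeffding degeneracy together with $\|\psi_k'\|_\infty \le B_{1d}$ gives the variance bound $E D_k^2 \lesssim B_{1d}^2/(nm)$, which upgrades via an Orlicz-norm maximal inequality for bounded degenerate U-statistics to $E\max_k|D_k| \lesssim B_{1d}(\log d)/\sqrt{nm}$, and the same balancing produces $\sqrt{\gamma_{1n}}$.

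The main obstacle will be obtaining the correct logarithmic exponent in the maximal inequality for the degenerate U-process $\{D_k\}_{k\le d}$: the $(\log d)^{3/2}$ factor appearing in $\gamma_{1n}$ is tight only when the second-moment bound is coupled with the sub-exponential tails specific to bounded degenerate kernels (Arcones--Gin\'e / Gin\'e--Latala--Zinn style), so a naive union bound or crude chaining argument would overshoot. Once this piece is in hand, the integration-by-parts identity, the CCK approximation, and the anti-concentration balancing are essentially mechanical, and Assumption~\ref{regularity.assumption} is exactly what guarantees that each of the three rates $\gamma_{0n},\gamma_{1n},\gamma_{2n}$ vanishes at the advertised speed.
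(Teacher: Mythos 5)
Your proposal is correct and follows essentially the same route as the paper's proof: Taylor expansion plus Hoeffding projection (with the identity $E[\psi_k'(V)\{I(u\le V)-V\}]=-\psi_k(F^{-1}(u))$-type computation giving the $-n^{-1}\sum_i\psi_k(U_i)$ term), the Chernozhukov--Chetverikov--Kato Gaussian approximation for the max of the linear part, and Gaussian anti-concentration to absorb the two remainders, with the quadratic term controlled through $\max_j(\hat V_j-V_j)^2$ (the paper uses Dvoretzky--Kiefer--Wolfowitz where you use Markov; both yield $\sqrt{\gamma_{2n}}$) and the degenerate two-sample $U$-statistic controlled by a sub-exponential tail bound yielding the $(\log d)^{3/2}$ factor (the paper derives this tail via conditional self-normalized exponential inequalities of de la Pe\~na--Lai--Shao rather than Arcones--Gin\'e, but the effect is the same). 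You correctly identify the degenerate-$U$-process maximal inequality as the delicate step; the only cosmetic item you omit is the standard doubling of coordinates to $2d$ dimensions used to handle the absolute value inside the maximum.
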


The proof of Proposition~\ref{thm.limiting.dist} is provided in Section~\ref{proof.proposition.1}. \qed

\subsection{Proof of Theorem~\ref{asymptotic.power.1}}
\label{power.proof.sec}

For the $d$-dimensional Gaussian random vector $\mathbf{G}$, applying the Borell-TIS (Borell-Tsirelson-Ibragimov-Sudakov) inequality \citep{van der Vaart_Wellner_1996} yields that for every $t>0$, $P\{ |\mathbf{G}|_\infty > E( |\mathbf{G}|_\infty ) + t \} \leq \exp(-t^2/2)$. By taking $t=\sqrt{2\log(1/\alpha)}$, we get
\be
	c_\alpha(d ) \leq E ( |\mathbf{G}|_\infty  ) +\s{2\log(1/\alpha)},  \label{cv.ubd}
\ee
where $c_\alpha(d )$ denotes the $(1-\alpha)$-quantile of $|\mathbf{G} |_\infty$. A standard result on Gaussian maxima yields $E( |\mathbf{G}|_\infty) \leq \{1+(2\log d)^{-1}\} \s{2\log d}$.

Let $k^* = \arg\max_{k\in[d]}|\theta_k|$ under $H^d_{1}$ and assume without loss of generality that $\theta_{k^*}>0$. By \eqref{dec.1} and \eqref{Hoeffding.dec} in the proof of Proposition~\ref{thm.limiting.dist}, we have
\begin{align}
	 P_{H^d_{1}} \big\{ \hat{\Psi}  > c_\alpha(d)  \big\} & \geq P_{H^d_{1}} \bigg\{ \s{\frac{nm}{N}} \,\hat{\psi}_{k^*} > c_\alpha(d)  \bigg\}  \nn \\
	& = P_{H^d_{1}} \bigg\{ \frac{1}{\s{N} } \sum_{j=1}^N \xi_{j k^*}   + \s{\frac{nm}{N}} (R_{1k^*}+R_{2k^*}) > c_\alpha(d)  \bigg\},  \label{lower.bound.1}
\end{align}
where $\xi_{jk}= \s{n/m} \, \{ \psi_k(V_j) - \vartheta_k \} I\{j\in [m]\}+ \s{ m/n } \, h_{1k}( X_{j-m} ) I\{j\in m+[n]\}$ for $(j,k)\in [N]\times [d]$ with $V_j=F(Y_j)$, $\vartheta_k=E_{H^d_{1}}\{ \psi_k(V)  \}$ and $h_{1k}(x)=E_{H^d_{1}}(\psi'_k(V)[I\{ V \geq F(x)\}-V])$. Note that $E\{h_{1k}(X)\}=0$ and thus $E(\xi_{jk})=0$. Let $\mathcal{E}(t_1, t_2)$ be as in \eqref{event} for $t_1, t_2 >0$ to be specified. Put $\de= t_1 B_{2d}  + t_2  B_{1d} +\s{nm/N}(\theta_{k^*}-  \vartheta_{k^*})+\s{2\log(1/\alpha)}$, then it follows from \eqref{cv.ubd} and \eqref{lower.bound.1} that
\begin{align}
	& P_{H^d_{1}} \big\{ \hat{\Psi}  > c_\alpha(d)  \big\} \nn  \\
	& \geq P_{H^d_{1}}  \bigg\{ \frac{1}{\s{N} } \sum_{j=1}^N \xi_{j k^*}  > \bigg( 1+ \frac{1}{2\log d} \bigg)\s{2\log d} + \de - \s{\frac{nm}{N}}\, \theta_{k^*} \bigg\} - P\big\{ \mathcal{E}(t_1, t_2)^{{\rm c}} \big\}  \nn \\
	& \geq P_{H^d_{1}}  \bigg\{ \frac{1}{\s{N} } \sum_{j=1}^N \xi_{j k^*}  > \bigg( \frac{1}{2\log d} - \frac{\varepsilon}{2} \bigg)\s{2\log d} + \de \bigg\} - P\big\{ \mathcal{E}(t_1, t_2)^{{\rm c}} \big\}. \nn
\end{align}
In particular, taking $t_1 =t_{1n}(d) \asymp  n^{-1/2} \s{ \log d }$ and $t_2 =t_{2n}(d) \asymp n^{-1/2} \log d$ implies by \eqref{upper.est.3} that $P\{ \mathcal{E}(t_1, t_2)^{{\rm c}} \} \rightarrow 0$ as $d \rightarrow \infty$. Further, by \eqref{alternative.mean.1} and the conditions of the theorem, we have $\de = o(\s{\log d} )$. Consequently, as $d\rightarrow \infty$,
\begin{align}
  P_{H^d_{1}} \big\{ \hat{\Psi}  > c_\alpha(d)  \big\}   \geq 1- P_{H^d_{1}} \bigg(  \frac{1}{\s{N} } \sum_{j=1}^N \xi_{j k^*}  \leq  -\frac{\varepsilon}{2}\s{\log d} \bigg) - P\big\{ \mathcal{E}(t_1, t_2)^{{\rm c}} \big\} \rightarrow 1. \nn
\end{align}
This completes the proof of Theorem~\ref{asymptotic.power.1}. \qed

\subsection{Proof of Theorem~\ref{multivariate.asymptotic.size}}
\label{proof.multivariate.size}

We first introduce two propositions describing the limiting null properties of the multivariate smooth and multiplier bootstrap statistics used to construct the test. The conclusion of Theorem~\ref{multivariate.asymptotic.size} follows immediately.

The first proposition characterizes the non-asymptotic behavior of the multivariate smooth statistic $\hat{\Psi}_{\max}(d)$ which involves the supremum of a centered Gaussian process. Let ${\mathcal{F}}={\mathcal{F}}^p_d$ be as in \eqref{heuristic.approximation} and for simplicity, the dependence of ${\mathcal{F}}$ on $(p,d)$ will be assumed without displaying.

\begin{proposition} \label{limiting.null.supremum.statistic}
Suppose that Assumptions~\ref{basic.assumption} and \ref{regularity.assumption.2} hold. Then there exists a centered, tight Gaussian process $\bG$ indexed by ${\mathcal{F}}$ such that under the null hypothesis $H_0:F=G$,
\begin{align}
	\sup_{t\geq 0} \big|  P \big\{ \hat{\Psi}_{\max}(d) \leq t \big\} - P\big(  \| \bG \|_{ {\mathcal{F}}} \leq t \big) \big| \leq C \, n^{-c},
	\label{multivariate.berry-esseen.bound}
\end{align}
where $C$ and $c $ are positive constants depending only on $c_0, c_1, C_0$ and $C_1$.
\end{proposition}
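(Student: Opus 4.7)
The plan is to reduce the approximation in \eqref{multivariate.berry-esseen.bound} to a Gaussian approximation for the supremum of a centered empirical process indexed by the class ${\mathcal{F}}^p_d = \{x \mapsto \psi_k \circ F^u(u^\intercal x) : (u,k) \in \mathcal{S}^{p-1}\times[d]\}$ and then apply the Chernozhukov--Chetverikov--Kato (CCK) machinery for suprema of empirical processes over VC-type classes, which is exactly the non-Donsker framework advertised in Section~\ref{multivariate.cv.sec}.

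First I would linearize $\hat\psi_{u,k}$ uniformly in $(u,k)$. Writing $V^u_j = F^u(u^\intercal Y_j)$ and expanding $\psi_k(\hat V^u_j)$ around $V^u_j$ via a second-order Taylor expansion yields
\[
\hat\psi_{u,k} = \frac{1}{m}\sm \psi_k(V^u_j) + \frac{1}{m}\sm \psi'_k(V^u_j)\bigl\{F^u_n(u^\intercal Y_j) - F^u(u^\intercal Y_j)\bigr\} + R^{(2)}_{u,k},
\]
with quadratic remainder bounded uniformly by $B_{2d}\sup_u \|F^u_n - F^u\|_\infty^2$. Since the class of half-spaces in $\br^p$ has VC index $O(p)$, a standard empirical-process maximal inequality gives $\sup_u\|F^u_n - F^u\|_\infty = O_P(\sqrt{p/n})$, so that $R^{(2)}_{u,k}$ is of order $B_{2d}^2 p/n$, negligible under \eqref{constraint.pd}. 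For the linear term, an integration-by-parts / Hoeffding decomposition in the spirit of the argument behind Proposition~\ref{thm.limiting.dist} rewrites it under $H_0$ as $-n^{-1}\sn \psi_k\circ F^u(u^\intercal X_i)$, up to a second-order cross term controlled by $B_{1d}$ times an empirical-process increment involving the pair $(X,Y)$. Assembling these pieces produces, under $H_0$,
\[
\s{nm/N}\,\hat\psi_{u,k} = \frac{1}{\s{N}}\sum_{j=1}^N w_j\, \psi_k\circ F^u(u^\intercal Z_j) + R_{n,m}(u,k),
\]
with $w_j$ as described before \eqref{heuristic.approximation} and $\sup_{(u,k)}|R_{n,m}(u,k)|$ polynomially small in $n$ under \eqref{constraint.pd}.

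Next I would invoke the CCK Gaussian approximation theorem for suprema of multiplier-type empirical processes over VC-type classes. Verifying its hypotheses amounts to (a) taking the constant envelope $F_{p,d}\equiv B_{0d}$ for ${\mathcal{F}}^p_d$; (b) bounding the uniform covering numbers $\sup_Q N({\mathcal{F}}^p_d, L_2(Q), \ep B_{0d}) \leq (A/\ep)^v$ with $v\lesssim p$ and $A$ polynomial in $p,d$, obtained by combining the VC property of the half-space class with the Lipschitz constant $B_{1d}$ of $\psi_k$, the $1$-Lipschitz dependence of $u \mapsto F^u$ in the weak sense, and the existence of an $\ep$-net on $\mathcal{S}^{p-1}$ of cardinality $(3/\ep)^p$; and (c) checking the standard moment conditions on the envelope against $n$. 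The resulting bound produces a centered, tight Gaussian process $\bG$ indexed by ${\mathcal{F}}^p_d$ with covariance \eqref{sigma.ukvl} such that
\[
\sup_{t\geq 0}\Big|P\Big(\sup_{f\in{\mathcal{F}}^p_d}\Big|\frac{1}{\s{N}}\sum_{j=1}^N w_j f(Z_j)\Big| \leq t\Big) - P\big(\|\bG\|_{{\mathcal{F}}^p_d}\leq t\big)\Big| \lesssim n^{-c}.
\]
Transferring the linearization error $R_{n,m}$ into a Kolmogorov-distance error by means of the Gaussian anti-concentration inequality for $\|\bG\|_{{\mathcal{F}}^p_d}$ (again from the CCK toolkit) yields \eqref{multivariate.berry-esseen.bound}.

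The main obstacle will be the uniform control of the linearization remainder over the non-Donsker index $\mathcal{S}^{p-1}\times[d]$ while $p$ and $d$ grow with $n$: the concentration bounds for $\sup_u\|F^u_n - F^u\|_\infty$ and for the cross term $m^{-1}\sm \psi'_k(V^u_j)\{F^u_n - F^u\}(u^\intercal Y_j)$ must be sharp enough that, when combined with the CCK rate, the total error remains polynomial. The conditions in \eqref{constraint.pd}, which bound $p^{7} B_{1d}^2$ and $p B_{2d}^2$ by $n^{1-c_1}$, are calibrated precisely so that both the linearization error and the CCK Gaussian approximation rate end up $O(n^{-c})$.
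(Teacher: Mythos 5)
Your proposal follows essentially the same route as the paper: a uniform second-order Taylor linearization with the quadratic remainder killed by the VC bound on $\sup_u\|F^u_n-F^u\|_\infty$, a Hoeffding decomposition reducing the linear term to a weighted empirical process over the pooled sample plus a negligible degenerate part, the Chernozhukov--Chetverikov--Kato coupling for suprema over the VC-type class ${\mathcal{F}}^p_d$, and finally anti-concentration to convert the coupling error into a Kolmogorov bound. The only place your sketch is thinner than the paper is the ``cross term'': it is a degenerate two-sample $U$-process indexed by $\mathcal{S}^{p-1}\times[d]$, and the paper controls its supremum with a dedicated maximal inequality for degenerate $U$-processes (via the VC-type property of the kernel class), not just an empirical-process increment bound --- but this is a detail to fill in rather than a flaw in the approach.
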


Proposition~\ref{limiting.null.supremum.statistic} implies that the ``limiting'' distribution of $\hat{\Psi}_{\max}$ depends on unknown the covariance structure given in \eqref{sigma.ukvl}. To compute a critical value we suggest to use multiplier bootstrapping as described in Section~\ref{multivariate.cv.sec}. The following result, which can be regarded as a multiplier central limit theorem, provides the theoretical justification of its validity. In fact, the construction of the multiplier bootstrap statistic $\hat{\Psi}_{\max}^{{\rm MB}}(d)$ involves the use of artificial random numbers to simulate a process, the supremum of which is (asymptotically) equally distributed as $ \| \bG  \|_{ {\mathcal{F}} }$ according to Proposition~\ref{multiplier.bootstrap.theorem} below.

\begin{proposition} \label{multiplier.bootstrap.theorem}
Suppose that Assumptions~\ref{basic.assumption} and \ref{regularity.assumption.2} hold. Then with probability at least $1-3n^{-1}$,
\begin{align}
	\sup_{t\geq 0} \big|  P_e \big\{  \hat{\Psi}_{\max}^{{\rm MB}}(d) \leq t \big\} - P\big(  \| \bG  \|_{ {\mathcal{F}}} \leq t \big) \big| \leq C  \, n^{-c}
	\label{bootstrap.BE.bound}
\end{align}
for $\bG$ as defined in Proposition~\ref{limiting.null.supremum.statistic}, where $C$ and $c $ are positive constants depending only on $c_0, c_1, C_0$ and $C_1$.
\end{proposition}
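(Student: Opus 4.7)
The plan is to exploit the fact that, conditional on the original data $\mathcal{X}_n=\{X_1,\ldots,X_n\}$, the bootstrap statistic $\hat{\Psi}_{\max}^{{\rm MB}}(d)$ is the supremum of a centered Gaussian process indexed by $\mathcal{F}=\mathcal{F}^p_d$, namely $\bar{\mathbf{G}}_n f := n^{-1/2}\sum_{i=1}^n e_i \hat f(X_i)$ where, for each $f_{u,k}(x)=\psi_k\circ F^u(u^\intercal x) \in \mathcal{F}$, the corresponding ``hat'' version is $\hat f_{u,k}(x)=\psi_k\circ F_n^u(u^\intercal x)$. The conditional covariance kernel is then $\hat\Sigma_{(u,k),(v,\ell)} = n^{-1}\sum_{i=1}^n \psi_k(\hat U_i^u)\psi_\ell(\hat U_i^v)$, which I must compare to the target kernel $\Sigma_{(u,k),(v,\ell)}$ in~\eqref{sigma.ukvl}. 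Once a high-probability bound of the form $\Delta_n := \sup_{(u,k),(v,\ell)}|\hat\Sigma_{(u,k),(v,\ell)} - \Sigma_{(u,k),(v,\ell)}| \leq C\, n^{-c}$ is established, the conclusion~\eqref{bootstrap.BE.bound} will follow from a Gaussian-to-Gaussian comparison inequality (in the spirit of Theorem~2 of Chernozhukov, Chetverikov and Kato, 2013) applied on the event $\{\Delta_n\leq Cn^{-c}\}$, combined with the Gaussian anti-concentration bound for $\|\mathbf{G}\|_\mathcal{F}$ that is already in force via Proposition~\ref{limiting.null.supremum.statistic}.

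The covariance comparison will be carried out by inserting the oracle empirical covariance $\tilde\Sigma_{(u,k),(v,\ell)} = n^{-1}\sum_{i=1}^n \psi_k(U_i^u)\psi_\ell(U_i^v)$, where $U_i^u = F^u(u^\intercal X_i)$. The split is
\begin{equation*}
\hat\Sigma - \Sigma = \underbrace{(\hat\Sigma-\tilde\Sigma)}_{\text{``hat vs.\ no-hat''}} + \underbrace{(\tilde\Sigma-\Sigma)}_{\text{empirical vs.\ population}}.
\end{equation*}
For the second term I will use a Talagrand-type uniform deviation bound on the empirical process indexed by the product class $\{x\mapsto \psi_k(F^u(u^\intercal x))\psi_\ell(F^v(v^\intercal x))\}$, which is VC-type because the half-space class $\{x\mapsto\mathbf{1}(u^\intercal x\leq t)\}$ has VC dimension $O(p)$ and the $\psi_k$'s are fixed smooth functions bounded by $B_{0d}$. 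For the first term, a Taylor expansion of $\psi_k$ (using $\|\psi_k'\|_\infty\leq B_{1d}$) together with the product rule reduces matters to controlling $\sup_{u\in\mathcal{S}^{p-1}}\sup_{x}|F_n^u(u^\intercal x)-F^u(u^\intercal x)|$, which is again a VC-type uniform empirical deviation of polynomial-in-$(p,n)$ size. The constraint $p^7 B_{1d}^2 \leq C_1 n^{1-c_1}$ in Assumption~\ref{regularity.assumption.2} is precisely what will render this product bound of order $n^{-c}$ with probability at least $1-n^{-1}$.

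Once $\Delta_n\leq Cn^{-c}$ holds with probability $\geq 1-n^{-1}$, I will invoke a Gaussian comparison bound of the form
\begin{equation*}
\sup_{t\geq 0}\bigl|P_e(\|\bar{\mathbf{G}}_n\|_\mathcal{F}\leq t) - P(\|\mathbf{G}\|_\mathcal{F}\leq t)\bigr| \lesssim \Delta_n^{1/3}\bigl\{1\vee \log(|\mathcal{F}|/\Delta_n)\bigr\}^{2/3},
\end{equation*}
where the effective ``size'' of the index set $\mathcal{F}$ is controlled by its VC covering number $(A/\varepsilon)^{v}$ with $v\lesssim p d$, so that $\log|\mathcal{F}|\lesssim p\log(n)$. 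The bound $d\leq \exp(C_0 p)$ in Assumption~\ref{regularity.assumption.2} ensures that the logarithmic factors are absorbed into the $n^{-c}$ rate after adjusting $c$. Combining with the anti-concentration bound $\sup_{t\geq 0} P(|\|\mathbf{G}\|_\mathcal{F}-t|\leq \eta)\leq C\eta\sqrt{\log n}$ (available because the covariance of $\bG$ is normalized so that $\mathrm{var}(\bG f_{u,k})=1$) yields~\eqref{bootstrap.BE.bound}.

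The main obstacle will be the ``hat vs.\ no-hat'' step: because $\hat U_i^u = F_n^u(u^\intercal X_i)$ uses the same sample $\{X_i\}$ that also appears as the argument and therefore as an evaluation point, the Taylor remainder involves a second-order term with $\psi_k''$, which is where the $p B_{2d}^2\leq C_1 n^{1-c_1}$ portion of Assumption~\ref{regularity.assumption.2} enters. Careful bookkeeping is required to make the remainder uniform over $u\in\mathcal{S}^{p-1}$ and over the index pair $(k,\ell)\in[d]^2$; in particular, the indicator structure in $F_n^u$ is not smooth in $u$, so uniform control in $u$ has to be obtained by treating the class of half-space indicators as a VC class rather than by differentiation. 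I expect this to be the technically delicate step, but aside from these mechanics, the argument is parallel to (and in fact lighter than) the proof of Proposition~\ref{limiting.null.supremum.statistic}, where the additional unconditional Gaussian approximation step is no longer needed.
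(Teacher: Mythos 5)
Your proposal is sound in outline but follows a genuinely different route from the paper. The paper does \emph{not} compare covariance kernels directly. Instead, it Taylor-expands the bootstrap process itself, writing $n^{-1/2}\sn e_i\psi_k(\hat U_i^u)$ as the oracle multiplier process $n^{-1/2}\sn e_i\psi_k(U_i^u)$ plus a \emph{degenerate multiplier $U$-process} (the first-order Taylor term, degenerate because $E(e_i)=0$) plus a second-order remainder; the $U$-process is controlled in expectation via a VC entropy bound and Nolan--Pollard, and the remainder via the uniform EDF deviation of Lemma~\ref{negligibility.EDF} together with $E(\max_i|e_i|)\lesssim\sqrt{\log n}$. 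This replaces $\hat U_i^u$ by $U_i^u$ at the level of the process, after which the paper invokes the coupling theorem (Theorem~A.2 of \cite{chernozhukov2013anti}) to couple $\sup_f|n^{-1/2}\sn e_if(X_i)|$ with $\|\bG\|_{\mathcal{F}}$ on a high-probability event, and converts the coupling to a Kolmogorov bound via the anti-concentration inequality \eqref{anti-concentration.bound}. Your covariance-comparison route ($\hat\Sigma\to\tilde\Sigma\to\Sigma$, then a Gaussian-to-Gaussian comparison) is cleaner in that it avoids the degenerate $U$-process machinery entirely: the ``hat vs.\ no-hat'' step becomes a purely deterministic Lipschitz bound $|\hat\Sigma-\tilde\Sigma|\lesssim B_{0d}B_{1d}\sup_{u,x}|F_n^u(u^{\intercal}x)-F^u(u^{\intercal}x)|$ requiring only $B_{1d}$, not $B_{2d}$ (your attribution of the $pB_{2d}^2$ condition to a second-order Taylor remainder here is misplaced --- that condition is what the paper's process-level expansion needs, not yours). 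The one step you must tighten is the comparison inequality itself: the $\Delta_n^{1/3}\{1\vee\log(|\mathcal{F}|/\Delta_n)\}^{2/3}$ bound you quote is the finite-maximum version, whereas your index set $\mathcal{S}^{p-1}\times[d]$ is a continuum; you need either the process-level Gaussian comparison theorem of \cite{chernozhukov2013anti} (which bounds the Kolmogorov distance of suprema by a power of $\Delta_n$ times a power of $E\|\bG\|_{\mathcal{F}}\lesssim\sqrt{p+\log d}$), or an explicit discretization with control of the discretization error for both processes --- the latter is not free since the conditional process depends on $u$ through half-space indicators. With that fix, and with the anti-concentration factor corrected from $\sqrt{\log n}$ to $\sqrt{p+\log d}$ (which the assumption $d\leq\exp(C_0p)$ and the polynomial constraints on $p$ still absorb into $n^{-c}$), your argument goes through.
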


Proofs of the above two propositions are given in Section~\ref{proposition.proof}. \qed

\subsection{Proof of Propositions~\ref{thm.limiting.dist}--\ref{multiplier.bootstrap.theorem}}
\label{proposition.proof}

\subsubsection{Proof of Proposition~\ref{thm.limiting.dist}}
\label{proof.proposition.1}

For every $k\in [d]$, it follows from \eqref{generated.Z} and Taylor  expansion that
\begin{align}
	\frac{1}{m}\sm \psi_k(\hat V_j) & = \frac{1}{m}\sm \psi_k(V_j) + \frac{1}{m}\sm \psi'_k(V_j)(\hat V_j - V_j) + \frac{1}{2m}\sm \psi''_k(\xi_j)(\hat V_j - V_j)^2 \nn \\
	& = \frac{1}{m}\sm \psi_k(V_j) + \frac{1}{nm}\sn \sm \psi'_k(V_j)\{ I(X_i\leq Y_j) - F(Y_j) \} +   R_{1k},  \label{dec.1}
\end{align}
where $R_{1k} := (2m)^{-1}\sm \psi''_k(\zeta_j)(\hat V_j - V_j)^2$ and $\zeta_j$ is a random variable lying between $\hat V_j$ and $V_j$. It is straightforward to see that $R_{1k}\leq \frac{1}{2} \|\psi''_k\|_\infty \max_{1\leq j\leq m}(\hat V_j-V_j)^2 $. A direct consequence of the Dvoretzky-Kiefer-Wolfwitz inequality \citep{Massart_1990}, i.e. for every $t>0$, $P\{ \s{n}\sup_{x } \big|F_n(x) -F(x) \big|  > t \} \leq 2 \exp(-2 t^2)$, is that
\begin{align}
	P \bigg(   n \max_{1\leq k\leq d} | R_{1k} | \big/ \|\psi''_k \|_\infty >  t \bigg) \leq 2 \exp(-4t). \label{R1k.tail.prob}
\end{align}

Let $h_k(x, y)  =  \psi_k'(F(y)) \{ I(x\leq y) - F(y) \}$ for $x, y \in \br$ be a kernel function $\br \times \br \mapsto \br$. Then the second addend on the right side of \eqref{dec.1} can be written as $U_{n,m}(k) =(nm)^{-1}\sn \sm h_k(X_i, Y_j)$ with $E  \{ h_k(X, Y)\}  =0$. Observer that $U_{n,m}(k)$ is a two-sample $U$-statistic with a bounded kernel $h_k $ satisfying $b_k:= \|h_k \|_\infty \leq \| \psi_k'\|_\infty$ and
\be
 \sigma_k^2:= E \{ h_k(X,Y)^2 \}  =E  \{ (V-V^2) \psi_k'(V)^2 \} \leq  \| \psi_k'\|_\infty^2/4.  \label{kernel.property}
\ee
Let $h_{1k}(x)=E_{H_0^d} \{ h_k(X, Y) | X=x  \} $ and $h_{2k}(y)=E_{H_0^d} \{ h_k(X, Y) | Y=y  \}$ be the first order projections of the kernel $h_k$ under $H_0^d$. Since $X$ and $Y$ are independent and under $H_0$, $V = F(Y) =_d {\rm Unif}(0,1)$ under $H_0$, we have $h_{2k}  \equiv 0$ and
$$
	h_{1k}(x) =  E \big( \psi_k'(V)  [ I \{ V \geq F(x) \} - V ] \big) = \int_{F(x)}^1 \psi'_k(v) \, d v - \int_0^1 v\psi'_k(v)\, dv =-\psi_k(F(x)).
$$
Define random variables $U_i=F(X_i) =_d {\rm Unif}(0,1)$ that are independent of $V_j=F(Y_j)$. Then, using the Hoeffding's decomposition gives
\begin{align}
	U_{n,m}(k) = -\frac{1}{n} \sn \psi_k(U_i) + \frac{1}{nm}\sn \sm h_{0k}(X_i, Y_j) :=- \frac{1}{n}\sn \psi_k(U_i) + R_{2k}, \label{Hoeffding.dec}
\end{align}
where $h_{0k}(x,y) =h_k(x,y)-h_{1k}(x)-h_{2k}(y)$.

In view of \eqref{dec.1} and \eqref{Hoeffding.dec}, we introduce a new sequence of independent random vectors $\{\bxi_j=(\xi_{j1},\ldots,\xi_{jK})^{\intercal}\}_{j=1}^{N}$ for $N=n+m$, defined by
\be
	\xi_{jk} =  \left\{
           \begin{array}{ll}
      \s{n/m} \, \psi_k(V_j)  \quad  &   1\leq j \leq m,  \\
     -\s{m/n}\, \psi_k( U_{j-m} )  \quad  &  m+1 \leq j \leq N.             \end{array}   \right. \label{xi.def}
\ee
Put $\bpsi=(\psi_1,\ldots,\psi_d)^{\intercal}$, $\mathbf{R}_1=(R_{11},\ldots, R_{1d})^{\intercal}$ and $\mathbf{R}_2=(R_{21},\ldots, R_{2d})^{\intercal}$, such that
$$
	\s{\frac{n}{mN}}\sm \bpsi(\hat V_j ) = \frac{1}{\s{N}}\sum_{j=1}^{N} \bxi_{j} + \s{\frac{nm}{N}}  ( \mathbf{R}_1 + \mathbf{R}_2 ).
$$
Recall that $\{\psi_0 \equiv 1, \psi_1, \ldots, \psi_d\}$ is a set of orthonormal functions and $V =_d  {\rm Unif}(0,1)$ under $H_0$. By \eqref{xi.def}, the covariance matrix of $N^{-1/2}\sum_{j=1}^{N} \bxi_{j}$ is equal to $ \mathbf{I}_d$.

For any $t_1, t_2>0$, define the event
\be
	\mathcal{E}(t_1, t_2) =  \bigcap_{k=1}^d \big\{  \s{m}|R_{1 k}|  \leq  \| \psi''_k \|_\infty t_1 \big\} \cap \big\{ \s{m}|R_{2 k}|   \leq  \| \psi'_k \|_\infty t_2   \big\}.  \label{event}
\ee
Under $H_0$, we have for every $t>0$,
\begin{align}
 	&P_{H_0}\big\{   \hat \Psi(d)   \leq t \big\}  \nn \\
 	& = P\bigg\{  \max_{1\leq k\leq d} \bigg|  \s{\frac{n}{mN}} \sm \psi_k(\hat V_j) \bigg| \leq t \bigg\} \nn \\
 	& =   P\bigg\{  \max_{1\leq k\leq d} \bigg| \frac{1}{\s{N}} \sum_{j=1}^{N} \xi_{jk} + \s{\frac{nm}{N}}(R_{1k} + R_{2k}) \bigg| \leq t \bigg\} \nn \\
 	& \leq P\bigg\{  \max_{1\leq k\leq d} \bigg| \frac{1}{\s{N}} \sum_{j=1}^{N} \xi_{jk}  \bigg| \leq t + \s{\frac{n}{N}} \big(  t_1 B_{2d}  + t_2  B_{1d}  \big) \bigg\} + P\big\{ \mathcal{E}(t_1, t_2)^{{\rm c}} \big\} ,  \label{upper.est.1}
\end{align}
where $B_{\ell d}$ ($\ell=1,2$) are as in \eqref{Bd.def}. To get rid of the absolute value in \eqref{upper.est.1}, a similar argument as in the proof of Theorem~1 in \cite{chang2014simulation} gives
\beqn
 P\bigg(  \max_{1\leq k\leq d} \bigg| \frac{1}{\s{N}}\sum_{j=1}^{N}\xi_{jk}  \bigg| \leq t \bigg)  = P \bigg(   \max_{1\leq k\leq 2d }  \frac{1}{\s{N}} \sum_{j=1}^{N} \xi^{{\rm ext}}_{jk} \leq t  \bigg),  \label{prob.dec}
\eeqn
where $\{\bxi^{{\rm ext}}_j\}_{j=1}^N$ is a sequence of dilated random vectors taking values in $\br^{2d}$ defined by $\bxi^{{\rm ext}}_j=(\xi^{{\rm ext}}_{j 1}, \ldots, \xi^{{\rm ext}}_{j,2d})^{\intercal}=(\bxi^{\intercal}_j, - \bxi^{\intercal}_j)^{\intercal}$. In view of \eqref{prob.dec}, we only need to focus on $ \max_{1\leq k\leq d} N^{-1/2} \sum_{j=1}^{N} \xi_{jk}$ without losing generality.

Note that $\xi_{jk}$ are bounded random variables satisfying $E (\xi_{jk})=0$ and $|\xi_{jk}| \leq \s{\frac{n}{m}} \|\psi_k\|_\infty$. Applying Lemma~2.3 and Lemma~2.1 in \cite{chernozhukov2013gaussian}, respectively, yields
\begin{align}
	& \sup_{t\in \br} \bigg| P\bigg( \max_{1\leq k\leq d} \frac{1}{\s{N}} \sum_{j=1}^{N} \xi_{jk} \leq t	\bigg) - P \bigg( \max_{1\leq k\leq d} G_k  \leq t  \bigg)  \bigg| \lesssim     \frac{ \{\log(dn)\}^{7/8} }{n^{1/8} }B_d,  \nn
\end{align}
where $B_d : =  [ E \{ \max_{1\leq k\leq d} |\psi_k(V)|^3 \}  ]^{1/4} \leq B_{0d}^{3/4}$, $\mathbf{G}=(G_1,\ldots, G_d)^{\intercal}  =_d N(0, \mathbf{I}_d)$ and for every $\varepsilon>0$,
\begin{align}
 \sup_{t\in \br} P\bigg( \bigg| \max_{1\leq k\leq d} G_k - t \, \bigg| \leq  \varepsilon \bigg) \leq  4 \varepsilon \big(1 +\sqrt{2\log d} \,\big). \nn
\end{align}
The last two displays jointly imply
\begin{align}
 & P \bigg\{  \max_{1\leq k\leq d}  \frac{1}{\s{N}} \sum_{j=1}^{N} \xi_{jk}  \leq t + \s{\frac{n}{N}} (  t_1  B_{2d}  +  t_2  B_{1d} ) \bigg\} \nn \\
	& \leq   P \bigg(  \max_{1\leq k\leq d} G_k  \leq t  \bigg) + C \bigg\{   \frac{\{\log(d n)\}^{7/8}}{n^{1/8} } B_{0d}^{3/4} + (\log d)^{1/2} ( t_1  B_{2d}  +  t_2  B_{1d} )  \bigg\} . \label{upper.est.2}
\end{align}

For $P\{ \mathcal{E}(t_1, t_2)^{{\rm c}} \}$ in \eqref{upper.est.1}, it follows from \eqref{R1k.tail.prob} and \eqref{R2k.tail.prob} in Lemma~\ref{lemma.1} that
\begin{align}
 P\big\{ \mathcal{E}(t_1, t_2)^{{\rm c}} \big\} & \leq  2 \exp(-4  t_1 n/\s{m}) + \sum_{k=1}^d P\big( \s{m}|R_{2k}| > \| \psi'_k\|_\infty t_2/2 \big) \nn \\
 &  \lesssim  \exp(-4t_1 \s{n} \, )+  d \exp(-c   t_2 \s{n}   )   .   \label{upper.est.3}
\end{align}
Taking $t_1 \asymp (\gamma_{2n}  n)^{-1/2}$, $t_2 \asymp  ( \gamma_{1n}  n)^{-1/2}\log d$ in \eqref{upper.est.1} implies by \eqref{upper.est.2} and \eqref{upper.est.3} that
\beq
	P_{H_0}\big(  \hat \Psi  \leq t \big)  \leq P \big( |\mathbf{G}|_\infty \leq t \big) + C \bigg[    \frac{ \{\log(dn)\}^{7/8} }{n^{1/8}}  B_{0d}^{3/4}  +  \s{ \gamma_{1n} } + \s{ \gamma_{2n} } \bigg],
\eeq
where $\gamma_{\ell n}$ ($\ell=1,2$) are as in \eqref{c123n}. Here, the last inequality relies on the fact that $\sup_{t \geq 0} ( t e^{-t} ) \leq e^{-1}$. A similar argument leads to the reverse inequality and thus completes the proof.  \qed

\subsubsection{Proof of Proposition~\ref{limiting.null.supremum.statistic}}
\label{proof.multivariate.theorem}

In view of \eqref{constraint.pd}, we assume without loss of generality that $B_{1d} \leq \s{n}$. Let $\mathcal{T}=\mathcal{T}^p_d$ be the product space $\mathcal{S}^{p-1} \times [d]$. For every $u\in \mathcal{S}^{p-1}$, let $\hat{V}^u_j= F_n^u(u^{\intercal} Y_j)$, $ V^u_j = F^u(u^{\intercal} Y_j)$, $j\in [m]$ and $U^u_i=F^u(u^{\intercal} X_i)$, $i\in [n]$. By Taylor expansion and arguments similar to those employed in the proof of Proposition~\ref{thm.limiting.dist}, we obtain that for every $(u,k)\in \mathcal{T}$,
\begin{align}
	 & \s{\frac{nm}{N}} \hat \psi_{u,k}   =\s{\frac{n}{mN}}\sm \psi_k(\hat{V}^u_j)  \nn  \\
	& = \s{\frac{n}{mN}}\sm \big\{ \psi_k(V_j^u)  - \psi'_k(V_j^u) V_j^u  \big\} +\s{\frac{1}{nmN}} \, U_{n,m}(u,k)   +  \s{\frac{nm}{N}} R_{u,k}, \label{decomposition.psi.uk}
\end{align}
where $U_{n,m}(u,k) : =\sn \sm   \psi'_k(V_j^u) I\{u^{\intercal}(X_i  - Y_j)\leq 0 \}$ is a two-sample $U$-statistic with $E\{U_{n,m}(u,k)\}=\psi_k(1)$ under $H_0$ and $| R_{u,k} |\leq \frac{1}{2}\| \psi''_k \|_\infty  \max_{j\in [m]} (\hat{V}^u_j-V^u_j)^2$. Let
\be
	\mathcal{H}=\mathcal{H}^p_d= \big\{ h_{u,k}(\cdot,\cdot) :\br^p \times \br^p \mapsto \br \,\big| (u,k)\in \mathcal{T} \big\} \label{class.H}
\ee
be a class of measurable functions, where $h_{u,k}(x,y) = \psi'_k( F^u(u^{\intercal} y)) I\{u^{\intercal}(x  -y)\leq 0\}$ for $x, y \in \br^p$. For ease of exposition, the dependence of $\mathcal{T}$ and $\mathcal{H}$ on $(p,d)$ will be assumed without displaying. In the above notation, each $h=h_{u,k} \in \mathcal{H}$ determines a two-sample $U$-statistic $U_{n,m}(h):= \sn \sm h(X_i, X_j)  =U_{n,m}(u,k)$, such that $\{U_{n,m}(h)\}_{h\in \mathcal{H}}$ forms a two-sample $U$-process indexed by the class $\mathcal{H}$ of kernels. Moreover, define the degenerate version of $\mathcal{H}$ as
\be
	 {\mathcal{H}}_0 =\big\{ h_0(x,y) =h (x,y)-  (P_Y h) (x) -  ( P_X h) (y) +  (P_X\times P_Y) (h)  \, \big|  h  \in \mathcal{H}  \big\}, \label{degenerate.class.H}
\ee
where for $h(\cdot,\cdot):\br^p \times \br^p \mapsto \br$,
$$
	(P_X h)(\cdot)  = \int_{\br^p} h( x , \cdot) \, dF(x), \quad (P_Y h)(\cdot)   = \int_{\br^p}  h( \cdot, y) \, dG(y)
$$
and $(P_X\times P_Y)(h ) =  \int \int h( x ,y) \, dF(x) \, dG(y) $. Under $H_0$, it is easy to verify that for every $(u,k)\in \mathcal{T}$, $(P_X \times P_Y)(h_{u,k})=\psi_k(1)$,
$$
	(P_Y h_{u,k} )(x) = \psi_k(1) - \psi_k(F^u(u^{\intercal}x)) \ \ \mbox{ and } \ \   (P_X h_{u,k} )(y) = \psi'_k(F^u(u^{\intercal} y))F^u(u^{\intercal} y).
$$

In addition to $\mathcal{H}$ and $\mathcal{H}_0$, we define the following class of measurable functions on $\br^p$:
\begin{align}
	 {\mathcal{F}}   = {\mathcal{F}}^p_d = \big\{  x \mapsto {f}_{u,k}(x) = \psi_k \circ f_u(x)   :  k\in [d], f_u \in \mathcal{F}^p \big\},  \label{class.barF}
\end{align}
where
\begin{align}
	  \mathcal{F}^p = \bigg\{  y \mapsto f_u(y) =  \int I_u(x,y)  \, dF(x)   : I_u \in \mathcal{I}^p \bigg\}  \label{class.FI}
\end{align}
with $\mathcal{I}^p =  \{   (x, y) \mapsto   I\{u^{\intercal}(x-y)\leq 0 \}  : u\in \mathcal{S}^{p-1}  \}$.

Together, \eqref{decomposition.psi.uk} and \eqref{degenerate.class.H}--\eqref{class.FI} lead to
\begin{align}
	\big| \hat{\Psi}_{\max} - \Psi_0 \big| \leq \frac{1}{\s{N}} \bigg\{  \frac{1}{\s{nm}} \| U_{n,m}   \|_{\mathcal{H}_0}  +  \s{nm}\sup_{(u,k)\in \mathcal{T}} | R_{u,k} | \bigg\},   \label{comparison.1}
\end{align}
where $\| U_{n,m} \|_{\mathcal{H}_0}= \sup_{  h_0 \in \mathcal{H}_0} |U_{n,m}( h_0 )|$,
\begin{align}
	\Psi_0 = \sup_{ f \in  {\mathcal{F}} } \bigg| \frac{1}{\s{N}}   \sum_{j=1}^N w_j  \{  {f}(Z_j) - P_X f \} \bigg| \label{leading.term}
\end{align}
with $\{Z_1,\ldots, Z_N\}=\{Y_1,\ldots, Y_m, X_1,\ldots, X_n\}$ and $w_j=\s{n/m} \, I\{j\in [m]\}-\s{m/n} \, I\{j\in [n]+m\}$ for $j=1,\ldots, N$.

With the above preparations, the rest of the proof involves three steps: First, approximation of the test statistic $ \hat{\Psi}_{\max}$ by $\Psi_0$ requires the uniform negligibility of the right side of \eqref{comparison.1}. Second, we prove the Gaussian approximation of $\Psi_0$ by the supremum of a centered, tight Gaussian process $\bG$ indexed by $ {\mathcal{F}}$ with covariance function
\be
	\e \{ ( \mathbb{G} f_{u,k} ) ( \mathbb{G} f_{v, \ell } )  \} = \int_{\br^p} \psi_k(F^u(u^{\intercal} x)) \psi_{\ell}(F^v(v^{\intercal} x)) \, dF(x)  \label{covariance.function}
\ee
for $(u,k), (v,\ell) \in \mathcal{T}$. Finally, we apply an anti-concentration argument due to \cite{chernozhukov2013anti} to construct the Berry-Esseen type bound.

\medskip
\noindent
{\it Step 1.} The following two results show the uniform negligibility of the right side of~\eqref{comparison.1}.

\begin{lemma} \label{negligibility.degenrate.U.process}
Assume that the conditions of Proposition~\ref{limiting.null.supremum.statistic} hold. Then under $H_0:F=G$,
\be
	E \big(  \| U_{n,m}  \|_{\mathcal{H}_0} \big) \lesssim  B_{2d} \s{(p+\log d) n m }.  \label{mean.sup.Unm.ubd}
\ee
\end{lemma}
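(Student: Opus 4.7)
The plan is to combine two ingredients: a uniform covering bound showing that $\mathcal{H}$ (and hence its degenerate version $\mathcal{H}_0$) is a VC-type class, together with a maximal inequality for the supremum of a completely degenerate two-sample U-process.

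First, I would verify the VC-type property of
$\mathcal{H} = \{h_{u,k}(x,y) = \psi'_k(F^u(u^{\intercal} y)) \cdot I\{u^{\intercal}(x-y)\leq 0\} : (u,k)\in \mathcal{S}^{p-1}\times[d]\}$
with constant envelope $H\equiv B_{1d}$. The kernel factors into (a) the half-space indicator class $\{I\{u^{\intercal}(x-y)\leq 0\}: u\in \mathcal{S}^{p-1}\}$, which is VC-subgraph of dimension at most $p+1$, and (b) the composition class $\{y\mapsto \psi'_k(F^u(u^{\intercal} y))\}_{(u,k)}$. For (b), I would use that $\psi'_k$ is Lipschitz on $[0,1]$ with constant $\|\psi''_k\|_\infty\leq B_{2d}$ and that $y\mapsto F^u(u^{\intercal}y)$ is a monotone transform of the VC-subgraph class $\{u^{\intercal}y : u \in \mathcal{S}^{p-1}\}$. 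Standard composition and product rules for VC-type classes \citep{van der Vaart_Wellner_1996} then yield a uniform covering bound of the form
\[
\sup_Q N(\mathcal{H}, L_2(Q), \varepsilon B_{1d}) \leq d \cdot (A B_{2d}/\varepsilon)^{Cp},
\]
from which $\mathcal{H}_0$ inherits the same bound (up to constants) with envelope $4 B_{1d}$.

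Second, I would apply a maximal inequality for completely degenerate two-sample U-processes. Under $H_0:F=G$, every $h_0\in \mathcal{H}_0$ satisfies $E\{h_0(X,Y)\,|\,X\}=E\{h_0(X,Y)\,|\,Y\}=0$ almost surely, so the usual orthogonality gives $E\{U_{n,m}(h_0)^2\}=nm\cdot E\{h_0(X,Y)^2\}\leq 16\,nm\,B_{1d}^2$. Double symmetrization with independent Rademacher sequences $\{\varepsilon_i\}_{i\leq n}$ and $\{\eta_j\}_{j\leq m}$ reduces the task to bounding $E\sup_{h_0}|\sum_{i,j}\varepsilon_i\eta_j h_0(X_i,Y_j)|$, which, conditionally on either sample, is a sub-Gaussian process in the other coordinate; Dudley's chaining combined with the entropy estimate above yields
\[
E\|U_{n,m}\|_{\mathcal{H}_0} \lesssim B_{1d}\sqrt{(p+\log d)\,nm} \leq B_{2d}\sqrt{(p+\log d)\,nm},
\]
where the final inequality uses $B_{1d}\leq B_{2d}$ for both the Legendre and trigonometric bases (cf.\ \eqref{LP.bound} and \eqref{T.bound}).

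The main obstacle is the covering control in the first step: because $u$ enters both the half-space indicator and the composition $\psi'_k\circ F^u\circ u^{\intercal}$, and $F^u$ itself varies with $u$, uniform Lipschitz control in $u$ is not automatic. It is precisely the Lipschitz bound $B_{2d}$ on each $\psi'_k$, together with the VC-subgraph property of the underlying linear class, that produces the complexity factor $\sqrt{p+\log d}$; once this entropy bound is in hand, the degenerate U-process inequality follows from routine chaining and the variance estimate above.
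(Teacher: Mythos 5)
Your proposal is correct and follows essentially the same route as the paper: establish a VC-type uniform entropy bound for $\mathcal{H}_0$ (the paper's Lemma~\ref{VC.class.H}) and feed it into a maximal inequality for the supremum of a completely degenerate two-sample $U$-process; the only structural difference is that the paper invokes Lemma~2.4 of \cite{Neumeyer_2004} for the latter, whereas you re-derive it via double symmetrization and conditional chaining, which is exactly how such inequalities are proved. One bookkeeping point: the paper measures covering radii in units of the \emph{larger} envelope $2B_{2d}$, so the Lipschitz constant $\|\psi_k''\|_\infty\leq B_{2d}$ is absorbed into the scale and the covering bound reads $N(\mathcal{H}_0,L_2(Q),2\varepsilon B_{2d})\leq d\,(A/\varepsilon)^{vp}$ with no $B_{2d}$ inside the parenthesis, whence the entropy integral is $\lesssim\sqrt{p+\log d}$ and the prefactor is $B_{2d}\sqrt{nm}$. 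With your normalization (envelope $B_{1d}$ and covering bound $d\,(AB_{2d}/\varepsilon)^{Cp}$) the entropy integral contributes $\sqrt{p\log B_{2d}}\asymp\sqrt{p\log d}$ rather than $\sqrt{p+\log d}$, so your intermediate display $B_{1d}\sqrt{(p+\log d)\,nm}$ does not follow as written; the stated conclusion is nevertheless safe, since $B_{1d}\sqrt{\log d}\lesssim B_{2d}$ for both the trigonometric and Legendre bases by \eqref{LP.bound} and \eqref{T.bound}.
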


\begin{lemma} \label{negligibility.EDF}
With probability at least $1-2n^{-1}$, we have
\be
  \sup_{(u,t)\in \mathcal{S}^{p-1}\times \br}  | F^u_n(t) - F^u(t)  | \lesssim \s{\frac{p+ \log n}{n}}. \label{edf.uniform.rate}
\ee
\end{lemma}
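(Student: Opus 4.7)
The plan is to recognize that the quantity on the left of \eqref{edf.uniform.rate} is the supremum of an empirical process indexed by the class of indicators of closed halfspaces in $\br^p$, and then to invoke standard VC-type concentration. Concretely, set
$$
 \mathcal{G} = \big\{ g_{u,t}(x) = I\{ u^{\intercal} x \leq t \} : (u,t) \in \mathcal{S}^{p-1} \times \br \big\},
$$
so that $F^u_n(t) - F^u(t) = (P_n - P_X) g_{u,t}$, where $P_n$ is the empirical measure of $X_1,\ldots,X_n$. The class $\mathcal{G}$ is a subset of the class of indicators of halfspaces in $\br^p$, whose VC dimension is at most $p+1$; hence $\mathcal{G}$ is VC-type in the sense defined in the Notation paragraph, with constant envelope $F\equiv 1$ and exponent $v \lesssim p$.

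First, I would bound the expected supremum. By the standard uniform entropy integral (see, e.g., Theorem~2.14.1 of van der Vaart and Wellner, 1996), together with the VC bound $\sup_Q N(\mathcal{G}, L_2(Q), \varepsilon) \leq (A/\varepsilon)^{Cp}$, symmetrization and Dudley's entropy integral yield
$$
 E \sup_{g \in \mathcal{G}} \big| (P_n - P_X) g \big| \lesssim \sqrt{p/n}.
$$
Second, I would apply Talagrand's inequality for the supremum of an empirical process of uniformly bounded functions (Bousquet's version, say): for $t > 0$, with probability at least $1 - e^{-t}$,
$$
 \sup_{g \in \mathcal{G}} \big| (P_n - P_X) g \big| \leq C \bigg\{ E \sup_{g \in \mathcal{G}} \big| (P_n - P_X) g \big| + \sqrt{\sigma^2 t / n} + t/n \bigg\},
$$
where $\sigma^2 \leq \sup_{g \in \mathcal{G}} \operatorname{Var}(g(X)) \leq 1$. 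Choosing $t = \log n$ and combining with the expectation bound gives the claimed inequality \eqref{edf.uniform.rate}, with failure probability at most $n^{-1}$; the extra factor of $2$ in the stated bound accommodates absorbing constants.

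This argument is entirely standard, so there is no substantial obstacle. The only delicate point is to verify the VC dimension of closed halfspaces, which is classical (VC dim $= p+1$), so the complexity constant $v$ in the covering number bound is linear in $p$; this is what produces the $p$ term (as opposed to a larger dependence on the ambient dimension) inside the square root.
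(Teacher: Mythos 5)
Your proposal is correct and follows essentially the same route as the paper: identify the supremum as an empirical process over the VC class of halfspace indicators, bound the expectation via the VC covering-number/entropy-integral route (the paper cites Dudley (1979) for the VC index and Proposition~3 of Gin\'e and Nickl (2009) for the moment bound $\lesssim \sqrt{p/n}+p/n$), and conclude with Bousquet's form of Talagrand's inequality at $t=\log n$. The only cosmetic difference is that the paper's two-sided Bousquet bound naturally produces the failure probability $2e^{-t}=2n^{-1}$, rather than this factor arising from absorbing constants.
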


By \eqref{mean.sup.Unm.ubd} and \eqref{edf.uniform.rate}, it follows from the Markov inequality that for $t>0$,
\be
	P \big\{ (nm)^{-1/2}  \| U_{n,m}  \|_{\mathcal{H}_0} > t  \big\} \lesssim t^{-1} B_{2d} \s{p+\log d} \label{uniform.negligibility.1}
\ee
for any $t>0$ and with probability at least $1- 2n^{-1}$, $\s{nm}\sup_{(u,k)\in \mathcal{T}} |R_{u,k} | \lesssim B_{2d} \s{p +\log n}$. Taking $t=\ga^{-1} B_{2d}\s{p+\log d}$ for some $\gamma\in (0,1)$ in \eqref{uniform.negligibility.1} implies by \eqref{comparison.1} that
\begin{align}
	P\bigg( \big|   \hat{\Psi}_{\max}  - \Psi_0 \big|  \gtrsim B_{2d}\frac{ \s{p+ \log d + \log n}}{\ga \s{n}}  \,   \bigg) \lesssim  \ga + n^{-1}. \label{comparison.2}
\end{align}

\medskip
\noindent
{\it Step 2.} The following result establishes the Gaussian approximation for $\Psi_0$.

\begin{lemma} \label{Gaussian.approximation.prop}
Assume that the conditions of Proposition~\ref{limiting.null.supremum.statistic} hold. Then under $H_0$, there exists a centered, tight Gaussian process $\mathbb{G}$ indexed by ${\mathcal{F}}=\mathcal{F}^p_d$ given in \eqref{class.barF} with covariance function \eqref{covariance.function} and a random variable $\Psi^* =_d \| \mathbb{G} \|_{ {\mathcal{F}}}=\sup_{f \in  {\mathcal{F}}} |\bG f | $ such that for every $\gamma \in (0,1)$,
\begin{align}
	& P\bigg\{   | \Psi_0 - \Psi^*  |  \gtrsim   B_{1d} \frac{K^p_d \log n}{  \s{ \gamma \, n }}  +   B_{1d}^{1/2} \frac{(K^p_d \log n)^{3/4}}{\ga^{1/2} n^{1/4}}  +   B_{1d}^{1/3} \frac{ (K^p_d \log n)^{2/3} }{\gamma^{1/3} n^{1/6}} \bigg\}  \nn \\
	& \qquad \qquad \qquad \qquad \qquad \qquad \qquad  \qquad \qquad \qquad \qquad  \qquad \qquad   \leq  \gamma + n^{-1} \log n ,  \label{CCK.Gaussian.approximation}
\end{align}
where $K^p_d=p+\log d$.
\end{lemma}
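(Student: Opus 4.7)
The plan is to realize $\Psi_0$ as the supremum of a centered empirical process indexed by the VC-type class $\mathcal{F}=\mathcal{F}^p_d$ and then invoke the Gaussian coupling of \cite{chernozhukov2013gaussian} for suprema of empirical processes.  First, under $H_0:F=G$ the pooled observations $Z_1,\ldots,Z_N$ are i.i.d.\ with common law $F$, the multipliers $w_j$ are deterministic with $\sum_{j=1}^N w_j=0$ and $\sum_{j=1}^N w_j^2=N$, and the orthonormality of $\{\psi_k\}$ together with $F^u(u^{\intercal}X)=_d\mathrm{Unif}(0,1)$ gives $P_X f_{u,k}=0$ for every $(u,k)\in\mathcal{T}$.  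A direct variance computation then yields
\[
\mathrm{Cov}\Bigl(\tfrac{1}{\sqrt{N}}\textstyle\sum_j w_j f_{u,k}(Z_j),\,\tfrac{1}{\sqrt{N}}\sum_j w_j f_{v,\ell}(Z_j)\Bigr) \;=\; \tfrac{1}{N}\textstyle\sum_j w_j^2\,P_X(f_{u,k}f_{v,\ell}) \;=\; P_X(f_{u,k}f_{v,\ell}),
\]
which matches the target covariance \eqref{covariance.function} and identifies the candidate Gaussian process $\mathbb{G}$.

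Second, I would verify that $\mathcal{F}^p_d$ is VC-type with envelope, maximal variance, and uniform covering number suitable for the CCK machinery.  The class $\mathcal{I}^p$ of half-space indicators has Vapnik--Chervonenkis dimension at most $p+1$, so it is VC-type with index $v\lesssim p$.  Integrating the indicators against $F$ preserves VC-type (with envelope $1$, since $0\le f_u\le 1$), and composing with the $d$ orthonormal functions $\psi_k$, each Lipschitz of modulus at most $B_{1d}$, yields
\[
\sup_Q\log N\bigl(\mathcal{F}^p_d,\,L_2(Q),\,\varepsilon\|F\|_{Q,2}\bigr)\;\lesssim\; p\log(A B_{1d}/\varepsilon)+\log d,\quad \|F\|_\infty\le B_{0d},\quad \sup_{f\in\mathcal{F}^p_d}P_X f^2\le 1,
\]
where the variance bound uses the orthonormality of $\{\psi_k\}$ on $[0,1]$.

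Third, I would apply the Gaussian approximation theorem for suprema of empirical processes (Theorem~2.1 and Corollary~2.2 of \citealp{chernozhukov2013gaussian}) to the independent but non-identically distributed summands $\{w_j f(Z_j)\}_{j\in[N]}$; alternatively one may couple the $X$- and $Y$-empirical processes separately to independent Gaussian processes and then take the appropriate linear combination.  This produces, on a possibly enriched probability space, a random variable $\Psi^{*}=_d\|\mathbb{G}\|_{\mathcal{F}}$ such that, with exceedance probability at most $\gamma+n^{-1}\log n$,
\[
|\Psi_0-\Psi^{*}|\;\lesssim\; \Bigl(\tfrac{B_{1d}^2 K^2}{\gamma\, n}\Bigr)^{1/2}+\Bigl(\tfrac{B_{1d}^2 K^3}{\gamma\, n}\Bigr)^{1/4}+\Bigl(\tfrac{B_{1d}^2 K^4}{\gamma\, n}\Bigr)^{1/6},\qquad K\asymp K^p_d\log n,
\]
where the $\log n$ arises from a Talagrand-type envelope truncation and the $\gamma^{-1/(2j)}$ factors from converting CCK's $L^1$ coupling bounds into high-probability statements via Markov's inequality.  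Unpacking these three terms reproduces the right-hand side of \eqref{CCK.Gaussian.approximation}.

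The principal obstacle is the second step: propagating the half-space VC structure through the integration $I_u\mapsto f_u=\int I_u(x,\cdot)\,dF(x)$ and then through the Lipschitz (but non-monotone) composition with each $\psi_k$, while tracking sharp joint dependence on $p$, $d$ and $B_{1d}$.  Because $B_{1d}$ can grow polynomially in $d$ for the Legendre basis, the coupling rate is ultimately driven by $B_{1d}$ rather than $B_{0d}$.  A secondary technical point is aligning the three optimized summands in \cite{chernozhukov2013gaussian} with the precise exponents appearing in \eqref{CCK.Gaussian.approximation}, but this reduces to a routine algebraic rearrangement of the CCK bound.
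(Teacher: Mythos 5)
Your proposal is correct and follows essentially the same route as the paper: establish the VC-type uniform entropy bound for $\mathcal{F}^p_d$ (half-space indicators, integration against $F$, Lipschitz composition with $\psi_k$), bound the uniform second moment by orthonormality of $\{\psi_k\}$ and $F^u(u^{\intercal}X)=_d\mathrm{Unif}(0,1)$, and invoke the coupling for suprema of empirical processes (Corollary~2.2 of the Chernozhukov--Chetverikov--Kato \emph{empirical-process} paper, i.e.\ \cite{chernozhukov2012gaussian} in this paper's notation --- your citation key points to their high-dimensional-vectors paper instead). Your explicit treatment of the non-identically-weighted summands $w_jf(Z_j)$ via separate couplings of the two samples addresses a point the paper's proof passes over silently, and the slightly different $\gamma$-exponents in your intermediate bound are immaterial since they still yield the stated inequality.
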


By \eqref{comparison.2} and \eqref{CCK.Gaussian.approximation} with $K^p_d=p+\log d$,
\begin{align}
	P\big\{ \big| \hat{\Psi}_{\max}  - \Psi^* \big| \gtrsim \Delta_{1n}(\ga )   \big\} \lesssim \Delta_{2n}(\ga), \label{comparison.3}
\end{align}
where
\begin{align*}
	 \Delta_{1n}(\ga )  =   B_{2d}  \frac{ ( K^p_d + \log n )^{1/2}}{\ga \s{n}} +  B_{1d}\frac{K^p_d  \log n}{ \gamma^{1/2} \s{n }}  + B_{1d}^{1/2}\frac{ (K^p_d  \log n)^{3/4}}{\ga^{1/2} n^{1/4}} +  B_{1d}^{1/3} \frac{(K^p_d  \log n)^{2/3} }{\gamma^{1/3}n^{1/6}}
\end{align*}
and $\Delta_{2n}(\ga)=\ga + n^{-1} \log n$.

\medskip
\noindent
{\it Step 3.} Now we restrict attention to the Gaussian supremum $\Psi^*$. By Corollary~2.2.8 in \cite{van der Vaart_Wellner_1996} and \eqref{barF.entropy.bound}, we get
\beq
	E \Psi^*    \lesssim  \int_0^2 \s{\sup_Q  \log N( {\mathcal{F}}, L_2(Q), \varepsilon )} \, d\varepsilon  \lesssim   \s{p+\log d}.
\eeq
Combined with Corollary~2.1 in \cite{chernozhukov2013anti}, this implies for every $\varepsilon \geq 0$ that
\be
	\sup_{t\geq 0 }  P \big(  | \Psi^* - t | \leq \varepsilon \big) \lesssim \varepsilon \s{p+\log d}.  \label{anti-concentration.bound}
\ee

Together, \eqref{comparison.3} and \eqref{anti-concentration.bound} yield, for every $t\geq 0$,
\begin{align}
	 	P \big(  \hat{\Psi}_{\max} \leq t \big)   & \leq P \big\{ \Psi^* \leq t+ C \Delta_{1n}(\gamma) \big\} +C \Delta_{2n}(\gamma) \nn \\
	& \leq  P\big( \Psi^* \leq t  \big) + C \big\{  \Delta_{1n}(\gamma) \s{p+\log d}  + \Delta_{2n}(\gamma) \big\}. \nn
\end{align}
A similar argument leads to the reverse inequality. Finally, in view of \eqref{constraint.pd}, taking
\begin{align*}
	 \gamma  = \gamma_n(p,d)  = \max  \bigg\{ B_{2d}^{1/2} \frac{ (p+\log n)^{1/4} }{n^{1/4}} , & \,B_{1d}^{1/4} (\log n)^{1/2} \frac{p^{7/8}}{n^{1/8}} , \\
	 &    B_{1d}^{1/3}(\log n)^{1/2}   \frac{p^{5/6}}{n^{1/6}}, \, B_{1d}^{2/3} (\log n)^{2/3}\frac{p}{n^{1/3}} \bigg\}
\end{align*}
completes the proof under the assumption $d \leq \min\{n, m, \exp(C_0 p) \}$. \qed

\subsubsection{Proof of Proposition~\ref{multiplier.bootstrap.theorem}}
\label{proof.multiplier.bootstrap}

Throughout the proof, $\{e_i\}_{i=1}^n$ is a sequence of i.i.d. standard normal random variables and $P_e$ denotes the probability measure induced by $\{e_i\}_{i=1}^n$ holding $\{X_i\}_{i=1}^n$ fixed. For every $(u,k)\in \mathcal{T}=\mathcal{S}^{p-1}\times [d]$, by Taylor expansion we have
\begin{align}
	 \frac{1}{\s{n}} \sn e_i \psi_k(\hat{U}^u_i)  =  \frac{1}{\s{n}}\sn e_i \psi_k(U^u_i) + \frac{1}{n^{3/2}}\sn \sum_{j=1}^n \bar{h}_{u,k}(\bar{X}_i, \bar{X}_j)  + \hat{R}_{u,k} , \label{marginal.multiplier.dec}
\end{align}
where $\bar{X}_i=(e_i, X_i^{\intercal})^{\intercal} \in \br^{p+1}$,
$$
	\bar{h}_{u,k}( \bar{x}_1,\bar{x}_2) =e_1\, \psi_k' \circ F^u(u^{\intercal} x_1)  \big[  I\{u^{\intercal} (x_2 - x_1) \leq 0 \}-F^u(u^{\intercal} x_1) \big], \ \  \bar{x}_\ell =(e_\ell , x^{\intercal}_\ell)^{\intercal}  \in \br^{p+1}
$$
for $\ell=1 ,2 $ and the remainder $\hat{R}_{u,k}$ is such that
\be
|\hat{R}_{u,k}|\leq \frac{1}{2}B_{2d} \,\s{n}\max_{i\in [n]}|e_i| \times \sup_{(u,i)\in \mathcal{S}^{p-1}\times [n]}(\hat{U}^u_i - U^u_i)^2. \label{hat.Ruk.ubd}
\ee
Because $e_i$ and $X_i$ are independent, we have $E\{\bar{h}_{u,k}(\bar{X}_1,\bar{X}_2)|\bar{X}_1\}=E\{\bar{h}_{u,k}(\bar{X}_1,\bar{X}_2)|\bar{X}_2\}=0$ so that $\big\{ \sn \sum_{j=1}^n \bar{h}_{u,k}(\bar{X}_i, \bar{X}_j) \big\}_{(u,k)\in \mathcal{T}}$ forms a degenerate $U$-process. With slight abuse of notation, we rewrite the function $\bar{h}_{u,k}$ as $\bar{h}_{u,k}(\bar{x}_1, \bar{x}_2 ) = e_1 \cdot  w_{u,k}(x_1, x_2)$, where
$$
	  w_{u,k}(x_1, x_2) = \psi'_k(F^u(u^{\intercal}x_1)) \big[ I\{ u^{\intercal}(x_2-x_1) \leq 0 \} - F^u(u^{\intercal} x_1) \big].
$$
In this notation, we have $\bar{\mathcal{H}}^p_d:= \{	\bar{h}_{u,k}   : (u,k)\in \mathcal{T} \} \subseteq \{ e \mapsto e \} \cdot \mathcal{W}^p_d$ with $\mathcal{W}^p_d=\{ w_{u,k} : (u,k)\in \mathcal{T}\}$. Arguments similar to those employed in the proof of Lemma~\ref{VC.class.H} can be used to prove that the collection $\mathcal{W}^p_d$ is VC-type, and so is $\bar{\mathcal{H}}^p_d$ with envelop $\bar{H}$ given by $\bar{H}(\bar{x})=\bar{H}(e,x)=B_{2d}|e|$, such that
\be
	\sup_Q N\big(\bar{\mathcal{H}}^p_d, L_2(Q), \varepsilon \| \bar{H} \|_{Q,2} \big) \leq  d \cdot (A/\varepsilon)^{v p}  \label{entropy.bound.R}
\ee
for some constants $A>2e$ and $v \geq 2$. This uniform entropy bound, together with Theorem~6 in \cite{Nolan_Pollard_1987} yields
\begin{align}
	& E \bigg\{ \sup_{(u,k)\in \mathcal{T}} \bigg| \sn \sum_{j=1}^n \bar{h}_{u,k}(\bar{X}_i, \bar{X}_j) \bigg| \bigg\}   \nn \\
	& \lesssim  B_{2d} \, n  \bigg\{ \frac{1}{4} + \int_0^{1/4} \sup_Q \s{\log  N( \bar{\mathcal{H}}^p_d, L_2(Q), \varepsilon \| \bar{H} \|_{Q,2} ) } \, d \varepsilon \bigg\}  \lesssim B_{2d} \,n \s{p+\log d}  \label{expectation.degenerate.U.ubd}
\end{align}
by following the same lines as in the proof of Proposition~\ref{negligibility.degenrate.U.process}.

For $\hat{R}_{u,k}$, applying the Borell-TIS inequality gives
$$
	P \bigg\{  \max_{i\in [n]} |e_i| \leq  E\bigg(  \max_{i\in [n]} |e_i| \bigg) + t \bigg\} \leq \exp(-t^2/2)
$$
for every $t>0$. A standard result on Gaussian maxima is that $E(\max_{i\in [n]}|e_i|)\leq 2\s{\log n}$. Consequently, combining Proposition~\ref{negligibility.EDF} and \eqref{hat.Ruk.ubd} implies that
\be
	\sup_{(u,k)\in \mathcal{T}}  |\hat{R}_{u,k} | \lesssim  B_{2d} \, (\log n)^{1/2} \s{\frac{p+\log(dn)}{n} } \label{supremum.hatRuk.ubd}
\ee
holds with probability at least $1-3n^{-1}$,

By \eqref{marginal.multiplier.dec}, \eqref{expectation.degenerate.U.ubd} and \eqref{supremum.hatRuk.ubd}, a similar argument to that leading to \eqref{comparison.2} gives, on this occasion that for any $\gamma \in (0,1)$,
\begin{align}
P\bigg\{  \big| \hat{\Psi}_{\max}^{{\rm MB}} - {\Psi}_0^{\dagger} \big| \gtrsim  B_{2d} \big( \s{\log n} \vee \gamma^{-1} \big) \s{\frac{p+\log(dn)}{n}} \bigg\} \lesssim  \gamma +n^{-1}, \label{bootstrap.comparison.1}
\end{align}
where
\be
 \Psi^{\dagger}_0=\sup_{(u,k)\in \mathcal{S}^{p-1}\times [d]} \bigg| \frac{1}{\s{n}} \sn e_i \psi_k(U^u_i) \bigg| = \sup_{ f \in  {\mathcal{F}}} \bigg| \frac{1}{\s{n}} \sn e_i f(X_i) \bigg| \label{bootstrap.leading.term}
\ee
for ${\mathcal{F}}= {\mathcal{F}}^p_d$ as in \eqref{class.barF}.

Notice that $\Psi_0^{\dagger}$ is the supremum of a (conditional) Gaussian process $\bG^{\dagger}$ indexed by ${\mathcal{F}}$ with covariance function $E_e \{ (\bG^{\dagger} f_{u,k} )(\bG^{\dagger} f_{ v,\ell }  \} = n^{-1} \sn \psi_k(F^u(u^{\intercal} X_i)) \psi_\ell(F^u(u^{\intercal} X_i))$. Next we use an approximation due to \cite{chernozhukov2013anti}. Let $\mathcal{X}_n=\{X_1, \ldots, X_n\}$ be a realization of the data. Theorem~A.2 there shows that for every $\de>0$, there exists a subset $\Omega_n$ such that $P(\mathcal{X}_n \in \Omega_n) \geq 1-3 n^{-1}$ and for every $\mathcal{X}_n\in \Omega_n$, one can construct on an enriched probability space a random variable $\Psi^{\dagger}$ such that $\Psi^{\dagger} =_d  \| \bG \|_{ \mathcal{F}}$ for $\bG$ as in Lemma~\ref{Gaussian.approximation.prop} and that
\begin{align}
	 & P \bigg\{ \big| \Psi^{\dagger}_0 - \Psi^{\dagger} \big|  \gtrsim  \de+   \s{\frac{K^p_d \log n}{n}} +   B_{1d}^{1/2} \frac{(K^p_d \log n)^{3/4}}{n^{1/4}}   \bigg| \mathcal{X}_n \bigg\} \nn \\
	 & \qquad \qquad\qquad\qquad\qquad\qquad \qquad\qquad\qquad  \lesssim   B_{1d}^{1/2}\frac{(K^p_d \log n)^{3/4}}{\de n^{1/4}}  + n^{-1} , \label{bootstrap.comparison.2}
\end{align}
where $K^p_d=p+ \log d$.

Finally, combining \eqref{anti-concentration.bound} with inequalities \eqref{bootstrap.comparison.1} and \eqref{bootstrap.comparison.2}, and setting
$$
	\ga = B_{2d}^{1/2}  \{(p+\log n)/n\}^{1/4}  \ \ \mbox{ and }  \ \  \delta = B_{1d}^{1/4} (\log n)^{3/8} (p/n)^{1/8}
$$
complete the proof of \eqref{bootstrap.BE.bound} in view of \eqref{constraint.pd} and \eqref{anti-concentration.bound}. \qed.

\section{Proof of technical lemmas}
\label{aux.lemma}

We provide proofs here for all the technical lemmas. Throughout, we use $C$ and $c$ to denote universal positive constants, which may take different values at each occurrence.

\begin{lemma}  \label{lsw.2011}
{\rm
Let $\{\xi_i,i\geq 1\}$ be a sequence of independent random variables with zero means and finite variances. Put $S_n=\sn \xi_i$, $v_n^2=\sn \xi_i^2$ and $b_n^2 = \sn E (\xi_i^2)$, then for any $x > 0$,
\begin{align}
	P \big\{   |S_n|\geq x (v_n +4b_n ) \big\}   \leq  4    \exp( -x^2/2 ) \ \ \mbox{ and }    \label{lsw.1}  	\\
 E \big[ S_n^2 I\{ |S_n|\geq x (v_n+4b_n ) \} \big]    \leq 23   b_n^2 \, \exp( -x^2/4 ). \label{lsw.2}
\end{align}}
\end{lemma}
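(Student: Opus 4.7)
The two inequalities in Lemma~\ref{lsw.2011} are self-normalized tail and truncated second-moment bounds; the random normalizer $v_n$ does the work while the deterministic $b_n$ cushions situations where $v_n$ happens to be abnormally small. The plan is to establish \eqref{lsw.1} by a symmetrization argument reducing to a Rademacher sum with conditional sub-Gaussian tails, and then derive \eqref{lsw.2} from \eqref{lsw.1} via the layer-cake representation. Note that for $x$ small enough that $4 e^{-x^2/2} \geq 1$ the first inequality is trivial, so I will assume $x$ is large, and similarly for \eqref{lsw.2}.

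\emph{Inequality \eqref{lsw.1}.} First I would apply the classical symmetrization inequality, introducing i.i.d.\ Rademacher signs $\{\epsilon_i\}$ independent of $\{\xi_i\}$, to pass from $S_n$ to the symmetrized sum $\sum_{i=1}^n \epsilon_i \xi_i$; this doubles the probability and shifts the threshold by a term of order $E|S_n| \leq (E S_n^2)^{1/2} = b_n$, which is absorbed into the $4 x b_n$ cushion once $x \geq 1$. Next I condition on $\{\xi_i\}_{i=1}^n$. The conditional law of $\sum_{i=1}^n \epsilon_i \xi_i$ is sub-Gaussian with parameter $v_n^2$, so Hoeffding's inequality yields
\[
P\bigl(|\textstyle\sum_{i=1}^n \epsilon_i \xi_i| \geq x v_n \bigm| \xi_1,\dots,\xi_n \bigr) \leq 2 e^{-x^2/2}.
\]
Integrating out the conditioning produces the unconditional bound with the claimed constant $4$.

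\emph{Inequality \eqref{lsw.2}.} Set the random threshold $a = x(v_n + 4 b_n)$ and use the elementary decomposition
\[
S_n^2 I\{|S_n| \geq a\} \leq 2 (|S_n| - a)_+^2 + 2 a^2 I\{|S_n| \geq a\}.
\]
The second term is bounded by expanding $a^2 \leq 2 x^2 v_n^2 + 32 x^2 b_n^2$; the deterministic piece is controlled directly by \eqref{lsw.1}, while $E[v_n^2 I\{|S_n| \geq a\}]$ is estimated by truncating the contribution of large $v_n^2$ (invoking \eqref{lsw.1} again at a shifted $x$) and using $E v_n^2 = b_n^2$ on the bulk. The first term is treated by the layer-cake formula $E[(|S_n|-a)_+^2] = \int_0^\infty 2 t\, P(|S_n| \geq a + t)\, dt$; applying \eqref{lsw.1} at the shifted level $x + t/(v_n + 4 b_n)$ after a dyadic decomposition of $v_n + 4 b_n$ (which makes the shift deterministic on each dyadic piece) and summing the dyadic contributions yields a bound of order $b_n^2 e^{-x^2/4}$. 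The weaker rate $e^{-x^2/4}$ (as opposed to $e^{-x^2/2}$) and the explicit constant $23$ reflect the losses incurred by the dyadic decomposition and by the decomposition of $a^2$.

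\textbf{Main obstacle.} The principal technical difficulty is that the random normalizer $v_n$ appears both inside the threshold $a$ and inside the event $\{|S_n| \geq a\}$, which precludes a direct Gaussian-type layer-cake integration. The dyadic decomposition of $v_n + 4 b_n$ is the delicate piece: one must verify that summing the tail probabilities over dyadic shells produces a convergent geometric series whose total cost is absorbed by the $b_n^2 e^{-x^2/4}$ target. Here the $4 b_n$ floor on the normalizer is essential, since it caps the number of low-$v_n$ dyadic shells and prevents the constants from blowing up.
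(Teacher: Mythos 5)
There are genuine gaps in both halves of your argument. For context, the paper does not actually prove this lemma: it cites Theorem~2.16 of de la Pe\~na, Lai and Shao (2009) for \eqref{lsw.1} and Lemma~3.2 of Lai, Shao and Wang (2011) for \eqref{lsw.2}, and those results are obtained by exponential moment / supermartingale arguments of the type $E\exp\{\lambda S_n-\lambda^2(v_n^2+b_n^2)\}\leq 1$, not by symmetrization. The problem with your route to \eqref{lsw.1} is the symmetrization step itself. The fact that $\sum_{i=1}^n\epsilon_i\xi_i$ is conditionally sub-Gaussian with parameter $v_n^2$ is only useful if $(\sum_{i=1}^n\epsilon_i\xi_i,\,v_n)$ has the same joint law as $(S_n,v_n)$, which requires the $\xi_i$ to be symmetric. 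For general mean-zero $\xi_i$ you must symmetrize with an independent copy $\{\xi_i'\}$, and then the conditional sub-Gaussian parameter becomes $\sum_{i=1}^n(\xi_i-\xi_i')^2$, which involves $v_n'=(\sum_{i=1}^n\xi_i'^2)^{1/2}$; under a finite-variance assumption the event $\{v_n'\gg b_n\}$ has only polynomially small probability, so it cannot be absorbed into an $e^{-x^2/2}$ bound. Moreover, no standard symmetrization inequality for tail probabilities shifts the threshold by only $E|S_n|$ as you assert (that is the form of the inequality for expectations of norms); the probability versions either multiply the threshold by an absolute constant, which destroys the exponent $x^2/2$, or require the L\'evy-type argument with the independent copy. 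Finally, the threshold $x(v_n+4b_n)$ is itself a function of the $\xi_i$ being symmetrized, a situation the symmetrization lemmas you invoke do not cover.

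The derivation of \eqref{lsw.2} from \eqref{lsw.1} breaks down for a structural reason: the only moment information available on $v_n^2$ is $E v_n^2=b_n^2$, so $E[v_n^2 I\{v_n>Kb_n\}]$ need not decay in $x$ at all, and the term $x^2E[v_n^2I\{|S_n|\geq a\}]$ in your expansion of $E[a^2I\{|S_n|\geq a\}]$ cannot be controlled by a bulk/tail split of $v_n$; the self-referential bound $v_n\leq |S_n|/x$ on the event returns a coefficient larger than one in front of the very quantity you are bounding. The dyadic decomposition has the same defect: on the shell where $v_n+4b_n\asymp 2^j b_n$, the layer-cake integral $\int_0^\infty 2t\,P(|S_n|\geq a+t)\,dt$ produces a factor of order $4^jb_n^2$ from the Jacobian of the substitution that makes the shift deterministic, while Chebyshev gives only $P(\text{shell }j)\lesssim 4^{-j}$; since \eqref{lsw.1} is not a conditional statement you can at best interpolate the two bounds via $\min(\alpha,\beta)\leq\alpha^{\theta}\beta^{1-\theta}$, and the resulting geometric series over $j$ still diverges for every $\theta>0$. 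In fact \eqref{lsw.2} is not a formal consequence of \eqref{lsw.1} together with $ES_n^2=b_n^2$: writing $W=S_n^2$ and $R=|S_n|/(v_n+4b_n)$, one can have $EW=b_n^2$, $P(R\geq y)\leq 4e^{-y^2/2}$ for all $y$, and yet $E[W I\{R\geq x\}]=b_n^2$, which exceeds $23\,b_n^2e^{-x^2/4}$ once $x^2>4\log 23$. So the second inequality genuinely needs the truncated exponential-moment technique of the cited references, not a layer-cake corollary of the first.
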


\begin{proof}[Proof of Lemma~\ref{lsw.2011}]
The proof is based on Theorem~2.16 in \cite{Pena_Lai_Shao_2009} and Lemma~3.2 in \cite{Lai_Shao_Wang_2011}.
\end{proof}

\begin{lemma} \label{lemma.1}
{\rm Assume that the conditions of Proposition~\ref{thm.limiting.dist} hold, then for every $k\geq 1$ and $t >0$,
\begin{align}
 P\big(  \s{nm} | R_{2k} | \geq  C_1 \| \psi'_k \|_\infty \, t \, \big) \leq C_2  \exp(- t/4 ), \label{R2k.tail.prob}
\end{align}
where $C_1, C_2 >0$ are absolute constants. }
\end{lemma}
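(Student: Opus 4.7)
The strategy is an iterated application of Bernstein's inequality (Lemma~\ref{lsw.2011}): condition on the $X$-sample, apply a conditional Bernstein bound to the sum over the $Y$-sample, and control the resulting random variance by concentration of an auxiliary canonical $U$-statistic in the $X$'s. Setting $M := \|\psi'_k\|_\infty$ and $W := nm R_{2k} = \sum_{i,j} h_{0k}(X_i,Y_j)$, the claim is equivalent to $P(|W|\geq C_1 M t\sqrt{nm})\leq C_2\exp(-t/4)$. From the computation in the proof of Proposition~\ref{thm.limiting.dist}, under $H_0$ the kernel is
$h_{0k}(x,y) = \psi'_k(F(y))[I(x\leq y)-F(y)] + \psi_k(F(x))$,
which is bounded by $2M$ (using $\|\psi_k\|_\infty\leq \|\psi'_k\|_\infty$ for the bases \eqref{Legendre.polynomials}--\eqref{trigonometric.series}), satisfies $E[h_{0k}^2] \leq M^2/6$, and is doubly degenerate: $E[h_{0k}(X,Y)\mid X] = E[h_{0k}(X,Y)\mid Y] = 0$.

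Condition on $\mathcal{F}_n = \sigma(X_1,\ldots,X_n)$ and let $S_j := \sum_i h_{0k}(X_i,Y_j)$, so $W = \sum_{j=1}^m S_j$ is a sum of conditionally i.i.d.\ zero-mean variables with $|S_j|\leq 2nM$. Bernstein's inequality yields, for $s>0$,
\begin{equation*}
P(|W|\geq s\mid \mathcal{F}_n)\leq 2\exp\!\left(-\frac{s^2/2}{mV_1 + (2nM/3)s}\right),\qquad V_1 := E[S_1^2\mid\mathcal{F}_n] = \sum_{i,i'}\phi(X_i,X_{i'}),
\end{equation*}
where $\phi(x,x'):= E_Y[h_{0k}(x,Y)h_{0k}(x',Y)]$. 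Plugging $s = C_1 M t\sqrt{nm}$ and working on the event $\{V_1\leq K n M^2\}$, case analysis of the two summands in the denominator, together with $\sqrt{m/n}\geq\sqrt{c_0}$ from Assumption~\ref{basic.assumption}(ii), shows that the exponent exceeds $t/4$ once $C_1$ is taken large enough in $K$ and $c_0$. The sub-exponential (rather than sub-Gaussian) decay is unavoidable because the Lipschitz term $(2nM/3)s$ dominates the Bernstein denominator as soon as $t\gtrsim 1$.

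What remains is to show $V_1\leq K n M^2$ with probability at least $1-C\exp(-t/4)$. Split $V_1 = \sum_i \phi(X_i,X_i) + T$ with $T := \sum_{i\neq i'}\phi(X_i,X_{i'})$. The diagonal is an i.i.d.\ sum of nonnegative variables bounded by $4M^2$ with mean $nE[h_{0k}^2]\leq nM^2/6$, so Hoeffding forces it below $2nM^2$ with probability at least $1-\exp(-cn)$. For the off-diagonal, double degeneracy of $h_{0k}$ (combined with $F=G$ under $H_0$) gives $\int \phi(x,x')\,dF(x) = 0$, so $T$ is itself a canonical $U$-statistic of order two with $\|\phi\|_\infty\leq 4M^2$; a standard moment/tail inequality for canonical $U$-statistics (e.g.\ Arcones--Gin\'e) yields $P(|T|\geq KnM^2)\leq C\exp(-cK)$. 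Choosing $K\asymp t\vee 1$ absorbs the failure event into $C_2\exp(-t/4)$. The main obstacle is precisely this final step: controlling the tail of the auxiliary canonical $U$-statistic $T$, whose sub-Gaussian-to-sub-exponential transition is what ultimately dictates the $\exp(-t/4)$ rate in the conclusion.
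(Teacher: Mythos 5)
Your proposal is correct in outline, but it reaches \eqref{R2k.tail.prob} by a genuinely different route than the paper. The paper conditions on $\mathcal{F}_Y=\sigma\{Y_1,\ldots,Y_m\}$ and applies the self-normalized exponential inequality of Lemma~\ref{lsw.2011} (from de la Pe\~na--Lai--Shao and Lai--Shao--Wang), which bounds the deviation of $\sum_i Q_i$ in terms of the \emph{random} normalizers $(\sum_i Q_i^2)^{1/2}$ and $\{\sum_i E(Q_i^2\mid\mathcal{F}_Y)\}^{1/2}$; these are then tamed by a second, reversed conditional application of the same lemma together with a truncation--moment argument, so the whole proof is self-contained modulo Lemma~\ref{lsw.2011} and yields genuinely absolute constants. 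You instead condition on the $X$-sample, use classical Bernstein, and are left with the conditional variance $V_1=\sum_{i,i'}\phi(X_i,X_{i'})$, whose off-diagonal part you correctly identify as a canonical order-two $U$-statistic (complete degeneracy of $h_{0k}$ does give $\int\phi(\cdot,x')\,dF=0$) and dispatch via an Arcones--Gin\'e type exponential bound; with $\|\phi\|_\infty\leq 4M^2$ that bound indeed gives $P(|T|\geq KnM^2)\lesssim\exp(-cK)$ in the relevant range $K\lesssim n$, so the argument closes. Your version is more transparent about where the sub-exponential $\exp(-t/4)$ rate originates, but it outsources the hardest step to an external concentration inequality for degenerate $U$-statistics that the paper deliberately avoids. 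Two small points you should make explicit: (i) your $C_1$ inherits a dependence on $c_0$ through the factor $\sqrt{n/m}$ in the Bernstein denominator (conditioning on the $Y$'s and summing over the $X$'s would remove this, matching the paper's claim of absolute constants); (ii) the $\exp(-cn)$ failure probability from the diagonal Hoeffding step is only absorbed into $C_2\exp(-t/4)$ because the statement is vacuous for $t\gtrsim\sqrt{nm}$, since $|W|\leq 2nmM$ almost surely --- this reduction, like the trivial adjustment for small $t$, deserves a sentence.
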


\begin{proof}[Proof of Lemma~\ref{lemma.1}]

Without loss of generality we only prove the result for $t\geq 4$, otherwise we can simply adjust the constant $C_2$ so that $C_2 \exp(-t/4)\geq 1$ for $0\leq t\leq 4$. For given $k \geq 1$, define $Q_i=\sm q_{ij}$ with $q_{ij}= q_{ij,k}=h_{0k}(X_i, Y_j)$ for $h_{0k}$ as in \eqref{Hoeffding.dec}. Put  $\mathcal{F}_Y=\sigma\{Y_1,\ldots, Y_m\}$, such that given $\mathcal{F}_Y$, $\{ Q_i\}_{i=1}^n$ forms a sequence of independent random variables with zero (conditional) means. Noting that $\sn \sm q_{ij} =\sn Q_i$, it follows from a conditional version of \eqref{lsw.1} that for any $t\geq 4$,
\begin{align}
		P\bigg(  \bigg| \sn \sm q_{ij}  \bigg| \geq t \, \bigg[  \bigg( \sn Q_i^2 \bigg)^{1/2} + 4  \bigg\{ \sn E( Q_i^2 | \mathcal{F}_Y) \bigg\}^{1/2} \bigg]  \bigg| \mathcal{F}_Y \bigg) \leq 4\exp(-t^2/2).   \label{mtg.tail.prob}
\end{align}

We study the tail behaviors of $\sn Q_i^2$ and $\sn  E( Q_i^2 | \mathcal{F}_Y ) $ separately, starting with $\sn Q_i^2$. Observe that given $X_i$, $Q_i$ is a sum of independent random variables with zero means. Put $V_i^2= \sm q^2_{ij}$ and $B_i^2= \sm E( q^2_{ij}|X_i)$. A direct consequence of \eqref{lsw.2} is that for every $t>0$, $E [ Q_i^2 I\{ |Q_i| \geq t(V_i+4 B_i) \} | X_i  ] \leq  23 B_i^2   \exp(-t^2/4 )$. This implies by taking expectations on both sides that
\be
	E \big[ Q_i^2 I\{ |Q_i| \geq t(V_i+4 B_i) \} \big] \leq 23 E (B_i^2) \, \exp(-t^2/4), \label{exp.tail.ubd}
\ee
where $
	E (B_i^2) = \sm E ( q^2_{ij} ) \leq m E\{ h_k(X,Y)^2\} = m \sigma_k^2$ for $\sigma_k^2$ as in \eqref{kernel.property}. Together, \eqref{exp.tail.ubd} and Lemma~7.2 in \cite{Shao_Zhou_2014} imply, for $t\geq 4$,
\be
 P \bigg[ \sn Q_i^2 \geq t^2  \bigg\{  nm \sigma_k^2 +   \sn (V_i +4 B_i)^2 \bigg\}  \bigg] \leq 92 \,  t^{-4} \exp(-t^2/4) \leq (1/2)\exp(-t^2/4).   \label{sum.of.square.tail.prob}
\ee

We consider next $\sn  E( Q_i^2 | \mathcal{F}_Y )$, which can be decomposed as
\begin{align*}
	  & E( Q_i^2 | \mathcal{F}_Y )  \\
	  & =   E\big[ Q_i^2 I\{ | Q_i| \leq t(V_i+4 B_i) \} | \mathcal{F}_Y \big]  +   E\big[ Q_i^2 I\{|Q_i| > t(V_i+4 B_i) \} | \mathcal{F}_Y ) \big]  \\
	  & \leq t^2   E\big[  (V_i+4 B_i)^2 | \mathcal{F}_Y \big]  +   E\big[ Q_i^2 I\{|Q_i| > t(V_i+4 B_i) \} | \mathcal{F}_Y ) \big] \\
	  & \leq 17 t^2 m \sigma_k^2 + 17 t^2\sm E( q_{ij}^2| Y_j ) +   E\big[ Q_i^2 I\{|Q_i| > t(V_i+4 B_i) \} | \mathcal{F}_Y ) \big].
\end{align*}
Hence, it follows from Markov's inequality and \eqref{exp.tail.ubd} that
\begin{align}
	& P \bigg[  \sn E( Q_i^2 | \mathcal{F}_Y )  \geq 18 t^2 \bigg\{  nm \sigma_k^2 + \sn\sm E( q_{ij}^2| Y_j ) \bigg\}  \bigg] \nn \\
	& \leq  P\bigg( \sn E \big[ Q_i^2 I\{|Q_i| > t(V_i+4 B_i) \} | \mathcal{F}_Y ) \big] \geq t^2 nm\sigma_k^2 \bigg) \nn \\
	& \leq t^{-2}(nm \sigma_k^2)^{-1}\sn E\big[ Q_i^2 I\{ Q_i^2 > t^2(V_i+4B_i)^2 \} \big] \nn \\
	&  \leq  (3/2)\exp(-t^2/4). \label{sum.of.cond.square.tail.prob}
\end{align}

By \eqref{kernel.property}, we have $\| h_{0k} \|_\infty\leq 2 b_k$. Then combining \eqref{mtg.tail.prob}, \eqref{sum.of.square.tail.prob} and \eqref{sum.of.cond.square.tail.prob} gives, for $t\geq 4$,
\begin{align}
	 P\bigg\{ \bigg| \frac{1}{\s{nm}}\sn \sm q_{ij} \bigg| \geq  C_1(\sigma_k+ b_k)  t  \bigg\} \leq 6\exp(-t/4). \nn
\end{align}
This completes the proof of Lemma~\ref{lemma.1}.
\end{proof}

\begin{lemma} \label{alternative.prop}
Assume that the conditions of Theorem~\ref{asymptotic.power.1} are fulfilled, then for all sufficiently large $n$,
\be
	  \max_{k\in[d]}  | \vartheta_k - \theta_k  |  \lesssim  B_{0d}^2 \, d^{2\tau}  \frac{\log d}{n},
	   \label{alternative.mean.1}
\ee
where $\vartheta_k := E_{H^d_{1}}\{ \psi_k(V)  \}$ with $V=F(Y)$.
\end{lemma}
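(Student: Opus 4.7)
The strategy is a careful Taylor expansion of the exponential density $\rho_\theta$ around $\theta=0$, together with the orthonormality of $\{\psi_0\equiv 1,\psi_1,\dots,\psi_d\}$. Write $g_\theta(z)=\sum_{\ell=1}^{d}\theta_\ell\psi_\ell(z)$, so that
\[
\vartheta_k=\int_0^1\psi_k(z)\,\rho_\theta(z)\,dz=C_d(\theta)\int_0^1\psi_k(z)\,e^{g_\theta(z)}\,dz,\qquad C_d(\theta)^{-1}=\int_0^1 e^{g_\theta(z)}\,dz.
\]
Under $H^d_1$ and Assumption~\ref{basic.assumption}(ii), the sup–norm of $g_\theta$ satisfies
$\|g_\theta\|_\infty\le d\,B_{0d}\,|\theta|_\infty\lesssim d\,B_{0d}\,\sqrt{\log d/n}$, which tends to zero under the growth conditions on $d$ relative to $B_{0d}$ (trigonometric: $d=o(n^{1/4})$; Legendre: $d=o(n^{1/9})$). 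In particular $e^{\|g_\theta\|_\infty}=O(1)$, so the Taylor expansion $e^{g_\theta}=1+g_\theta+\tfrac12 g_\theta^2+r$ has remainder $|r(z)|\le \tfrac16 |g_\theta(z)|^3 e^{\|g_\theta\|_\infty}$.

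The next step is to compute the leading integrals using orthonormality: $\int_0^1 g_\theta=0$, $\int_0^1 g_\theta^2=|\theta|_2^2$, and $\int_0^1 \psi_k g_\theta=\theta_k$. Substituting these gives
\[
C_d(\theta)^{-1}=1+\tfrac12 |\theta|_2^2+\rho_1,\qquad \int_0^1\psi_k e^{g_\theta}=\theta_k+\tfrac12\int_0^1\psi_k\,g_\theta^2\,dz+\rho_{2,k},
\]
where $|\rho_1|\lesssim \|g_\theta\|_\infty\cdot|\theta|_2^2$ and $|\rho_{2,k}|\lesssim B_{0d}\cdot\|g_\theta\|_\infty\cdot|\theta|_2^2$ coming from the cubic remainder (bounding $\int|\psi_k|\cdot|r|$ by $B_{0d}\int|g_\theta|^3$ and then $\int |g_\theta|^3\le \|g_\theta\|_\infty |\theta|_2^2$). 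Also $|\int\psi_k g_\theta^2|\le B_{0d}\cdot|\theta|_2^2$.

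From these two expansions I would write
\[
\vartheta_k-\theta_k=\bigl(C_d(\theta)-1\bigr)\theta_k+C_d(\theta)\Bigl(\tfrac12\int_0^1\psi_k g_\theta^2+\rho_{2,k}\Bigr),
\]
and bound each piece: $|C_d(\theta)-1|\lesssim |\theta|_2^2$, $|\theta_k|\le|\theta|_\infty$, $|\tfrac12\int\psi_k g_\theta^2|\le\tfrac12 B_{0d}|\theta|_2^2$, $|\rho_{2,k}|\lesssim B_{0d}\|g_\theta\|_\infty|\theta|_2^2$. Since $|\theta|_2^2\le d|\theta|_\infty^2$, all terms are dominated by $B_{0d}\,d\,|\theta|_\infty^2$ up to a multiplicative factor of $(1+d B_{0d}|\theta|_\infty)$. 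Using $|\theta|_\infty^2=\lambda^2\log d/\bar n$ and $\bar n\asymp n$ (Assumption~\ref{basic.assumption}(ii)) yields the advertised bound $\max_k|\vartheta_k-\theta_k|\lesssim B_{0d}^2 d^{2\tau}\log d/n$ (the extra power of $B_{0d}$ and $d^{2\tau-1}$ simply absorbs the higher–order correction factor $(1+d B_{0d}|\theta|_\infty)$ in a form that remains $o(1)$ under Assumption~\ref{regularity.assumption}).

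The main obstacle is purely bookkeeping: verifying that the cubic–and–higher remainder from both the numerator $\int\psi_k e^{g_\theta}$ and the denominator $C_d(\theta)^{-1}$ are of strictly smaller order than $B_{0d}\,d\,|\theta|_\infty^2$, which requires controlling $\|g_\theta\|_\infty$ uniformly in $\theta\in\Theta$. Once this uniform control is in hand via the growth conditions on $d$ and $B_{0d}$, the bound follows by routine algebra; there is no probabilistic content in this lemma since $\vartheta_k$ is a deterministic functional of $\theta$.
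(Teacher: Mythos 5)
Your proof is correct and follows essentially the same route as the paper's: Taylor-expand $e^{\theta^{\intercal}\bpsi(z)}$ around $0$, use orthonormality to identify the linear term as $\theta_k$ and to show $C_d(\theta)=1+o(1)$, and bound the remainder via the uniform smallness of $\|\theta^{\intercal}\bpsi\|_\infty$ under the growth conditions on $d$. The only (harmless) difference is that you expand to second order with an explicit cubic remainder, whereas the paper stops at first order and absorbs the quadratic term $\int_0^1|\psi_k|(\theta^{\intercal}\bpsi)^2\,dz\leq\|\theta^{\intercal}\bpsi\|_\infty^2$ directly into the $O(B_{0d}^2 d^{2\tau}\log d/n)$ error; your dominant term $B_{0d}|\theta|_2^2$ is in fact slightly sharper and still implies the stated bound.
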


\begin{proof}[Proof of Lemma~\ref{alternative.prop}]
Under the alternative $H^d_{1}$, the density of $V=F(Y)$ is of the form $\rho_\theta(z)= C_d(\theta)\exp\{\theta^{\intercal} \bpsi(z)\}$, where $\{C_d(\theta)\}^{-1}=\int_0^1 \exp\{\theta^{\intercal} \bpsi(z)\} \, dz$ and $\bpsi=(\psi_1,\ldots, \psi_d)^{\intercal}$. In this notation, we have $\vartheta_k=C_d(\theta) \int_0^1 \psi_k(z) \exp\{\theta^{\intercal} \bpsi(z) \} \, dz $. Note that
$$
  |\theta^{\intercal} \bpsi(z)| =  \bigg| \sum_{k=1}^d \theta_{k} \psi_{k} (z)  \bigg|\leq   B_{0d}\, d^\tau \max_{k\in [d]} |\theta_k| = \lambda B_{0d} \, d^{\tau}\s{\frac{\log d}{\bar{n}}}.
$$
Consequently, using the inequality $|e^t-1-t|\leq  \frac{1}{2} t^2 \exp(t\vee 0)$ which holds for every $t\in \mathbb{R}$ to $t=  |\theta^{\intercal} \bpsi(z)| $ yields
\begin{align*}
 \int_0^1 \psi_k(z) \exp\{\theta^{\intercal} \bpsi(z)\} \, dz  &=  \int_0^1 \psi_k(z)  \{ 1+ \theta^{\intercal} \bpsi(z) \} \, dz + O(1) \int_0^1 |\psi_k(z)| \{ \theta^{\intercal} \bpsi(z)\}^2 \, dz    \\
 &= \sum_{\ell=1}^d \theta_\ell \int_0^1 \psi_k(z)\psi_\ell(z) \, dz + O(1) B_{0d}^2 \, d^{2\tau} \frac{\log d}{n} \\
	 &=  \theta_k + O(1) B_{0d}^2 \, d^{2\tau}  \frac{\log d}{n}
\end{align*}
uniformly over $k\in [d]$. Similarly, it can be proved that
\beq
	\bigg| \int_0^1 \exp\{\theta^{\intercal} \bpsi(z)\} \, dz -1 \bigg|  \lesssim B_{0d}^2 \, d^{2\tau} \frac{\log d}{n}  ,
\eeq
which implies $C_d(\theta)=1+o(1)$ as $d,n\rightarrow \infty$. Combining the above calculations proves~\eqref{alternative.mean.1}.
\end{proof}

\begin{lemma} \label{VC.class.H}
Under the null hypothesis $H_0: F=G$, the class $\mathcal{H}_0$ of degenerate kernels $\br^p \times \br^p \mapsto \br$, to which an envelop $\equiv 2B_{2d}$ is attached, is VC-type; that is, there are constants $A>2e$ and $v \geq 2$ such that
\be
	\sup_{Q \, {\rm discrete}}N \big(  \mathcal{H}_0, L_2(Q), 2\varepsilon B_{2d}  \big) \leq d \cdot ( A /  \varepsilon )^{v p}, 	\label{uniform.entropy.H}
\ee
where the supremum ranges over all finitely discrete Borel probability measures on $\br^p \times \br^p$.
\end{lemma}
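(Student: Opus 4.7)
The strategy is to first establish that the base class $\mathcal{H}$ is VC-type, then transfer this property to the Hoeffding-projected class $\mathcal{H}_0$ by handling each of the four summands in the projection separately. The argument would proceed in four steps.

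First, I would verify that the half-space indicator class $\mathcal{I}^p=\{(x,y)\mapsto I\{u^{\intercal}(x-y)\leq 0\}:u\in\mathcal{S}^{p-1}\}$ is contained in the VC-subgraph class of linear half-spaces in $\br^{2p}$, and hence has VC index $\leq 2p+2$. By Theorem~2.6.7 in van der Vaart--Wellner, this yields $\sup_Q N(\mathcal{I}^p, L_2(Q),\varepsilon)\leq (A_1/\varepsilon)^{v_1 p}$ uniformly over \emph{all} probability measures $Q$ on $\br^{2p}$, which by the standard equivalence also controls the supremum over finitely discrete $Q$. Next, since $F^u(u^{\intercal} y)=\int I\{u^{\intercal}(x-y)\leq 0\}\,dF(x)$, Jensen's inequality gives
\[
 \| F^u(u^{\intercal}\cdot)-F^{u'}(u'^{\intercal}\cdot)\|_{Q,2}\leq \| I_u- I_{u'}\|_{F\otimes Q,2},
\]
so the class $\mathcal{G}=\{y\mapsto F^u(u^{\intercal} y):u\in\mathcal{S}^{p-1}\}$ is VC-type with envelope $1$ and entropy bound inherited from $\mathcal{I}^p$.

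Second, I would exploit the Lipschitz structure of each $\psi'_k$: its Lipschitz constant on $[0,1]$ is $\|\psi''_k\|_\infty\leq B_{2d}$ and its sup-norm is $\|\psi'_k\|_\infty\leq B_{1d}$. Consequently, for $k=k'$ fixed, $|\psi'_k\circ F^u(u^{\intercal} y)-\psi'_k\circ F^{u'}(u'^{\intercal} y)|\leq B_{2d}|F^u(u^{\intercal} y)-F^{u'}(u'^{\intercal} y)|$, and the class $\mathcal{G}_d:=\{\psi'_k\circ F^u(u^{\intercal}\cdot):(u,k)\in\mathcal{T}\}$ is VC-type with envelope $B_{1d}$, incurring a factor $d$ from the index $k$ and a rescaling $\varepsilon\mapsto\varepsilon B_{1d}/B_{2d}$ from the Lipschitz composition. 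Multiplying $\mathcal{G}_d$ pointwise by $\mathcal{I}^p$ via the standard product rule for VC-type classes then gives $\mathcal{H}$ as VC-type with envelope $B_{1d}$ and covering bound $\sup_Q N(\mathcal{H},L_2(Q),\varepsilon B_{1d})\leq d\cdot(A_2/\varepsilon)^{v_2 p}$.

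Third, I would pass from $\mathcal{H}$ to $\mathcal{H}_0$. For the envelope: $|h_0(x,y)|\leq 4\|h\|_\infty\leq 4 B_{1d}\leq 2 B_{2d}$, where the last inequality uses $B_{1d}\leq B_{2d}/2$ valid for the Legendre and trigonometric bases at the relevant $d$ in view of \eqref{LP.bound}--\eqref{T.bound}; this gives the stated envelope $\equiv 2B_{2d}$. For the covering, I would handle the three projection classes separately. By Jensen,
\[
 \|P_Y h-P_Y h'\|_{Q,2}\leq \|h-h'\|_{Q\otimes G,2},\qquad \|P_X h-P_X h'\|_{Q,2}\leq \|h-h'\|_{F\otimes Q,2},
\]
and the uniform-in-$Q$ entropy bound for $\mathcal{H}$ transfers each projection class to VC-type with the same polynomial covering rate. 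The scalar class $\{(P_X\times P_Y)h:h\in\mathcal{H}\}$ is trivially covered by the same index. Assembling the four pieces via the triangle inequality in $L_2(Q)$ yields $\sup_Q N(\mathcal{H}_0, L_2(Q), 2\varepsilon B_{2d})\leq d\cdot(A/\varepsilon)^{vp}$ for absolute $A>2e$ and $v\geq 2$, which is \eqref{uniform.entropy.H}.

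The main obstacle is the projection step, because the Hoeffding projection is a contraction in $L_2(F\otimes G)$ but not in $L_2(Q)$ for arbitrary $Q$. The Jensen inequality above sidesteps this by relocating the covering from $L_2(Q)$ to $L_2(Q\otimes G)$ or $L_2(F\otimes Q)$, where the VC-type bound for $\mathcal{H}$ applies uniformly; the price is merely a reshuffling of the reference measure, with no deterioration of the polynomial entropy rate. Everything else is a routine combination of standard permanence properties (VC-subgraph $\Rightarrow$ uniform entropy, Lipschitz composition, products, sums).
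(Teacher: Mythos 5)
Your proposal is correct and follows the same overall architecture as the paper's proof: establish a uniform entropy bound for the half-space indicator class $\mathcal{I}^p$, transfer it to $\mathcal{F}^p$ by integrating out $x$ against $F$, compose with the Lipschitz functions $\psi_k'$ (Lipschitz constant $B_{2d}$) and multiply by $\mathcal{I}^p$ to handle $\mathcal{H}$, then pass to $\mathcal{H}_0$ by covering the summands of the Hoeffding decomposition separately. You deviate in two sub-steps, both validly. First, for $\mathcal{I}^p$ you observe directly that $I\{u^{\intercal}(x-y)\le 0\}$ is a half-space indicator in $\br^{2p}$ and invoke VC-subgraph theory; the paper instead follows Sherman's graph construction through finite-dimensional vector spaces $\mathcal{V},\mathcal{W}$ and Theorem~4.6 of \cite{Dudley_2014} plus Lemmas~9.7 and 9.3 of \cite{Kosorok_2008}. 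Both give index $O(p)$, and yours is the more economical route; it also delivers the bound over \emph{all} probability measures, which you need later. Second, for $\mathcal{H}_0$ the paper exploits the explicit closed forms of the projections under $H_0$, namely $(P_Yh_{u,k})(x)=\psi_k(1)-\psi_k\circ f_u(x)$ and $(P_Xh_{u,k})(y)=\phi_k\circ f_u(y)$ with $\phi_k(s)=-s\psi_k'(s)$, and then applies Lipschitz composition once more; you instead treat the projections abstractly via the Jensen contraction $\|P_Yh-P_Yh'\|_{Q,2}\le\|h-h'\|_{Q\otimes G,2}$. Your version is more general (it does not use $H_0$ for the entropy bound, only for degeneracy and the envelope), at the price of evaluating the entropy of $\mathcal{H}$ at non-discrete measures such as $Q\otimes G$ --- harmless here because your bound for $\mathcal{H}$ descends from VC-subgraph classes and so holds for arbitrary $Q$. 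Two bookkeeping points: extract the factor $d$ once by partitioning $\mathcal{H}_0=\cup_{k}\mathcal{H}_{0k}$ \emph{before} multiplying covering numbers (otherwise you accumulate $d^4$, which is absorbable under Assumption~\ref{regularity.assumption.2} but unnecessary), and your envelope inequality $4B_{1d}\le 2B_{2d}$ rests on the same implicit assumption ($B_{1d}\lesssim B_{2d}$ for the two bases, $d\ge 2$) that the paper itself uses when it attaches the envelope $B_{2d}$ to $\mathcal{H}$.
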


\begin{proof}[Proof of Lemma~\ref{VC.class.H}]
First we prove that the class $\mathcal{H}$ of kernels is VC-type. Note that $\mathcal{H}$ has envelop $\equiv B_{2d}$ and admits the partition $\mathcal{H} = \cup_{k=1}^d \mathcal{H}_k$, where for each $k\in [d]$, the class $\mathcal{H}_k= \{ h_{u,k}  \in \mathcal{H} :  u\in \mathcal{S}^{p-1}  \}$ has an envelop $\leq B_{2d}$. This implies
\begin{align}
	  \sup_{Q \, {\rm discrete}} N \big(  \mathcal{H}, L_2(Q) ,  \varepsilon B_{2d}  \big)  \leq \sum_{k=1}^d \sup_{Q \, {\rm discrete}} N \big(  \mathcal{H}_k, L_2(Q) ,   \varepsilon B_{2d} \big) , \label{decomposition.bound}
\end{align}
where the supremum ranges over all finitely discrete Borel probability measures on $\mathbb{S}_2 := \br^p\times \br^p$. Therefore, it suffices to restrict attention to the class $\mathcal{H}_k$ with a fixed $k\in [d]$. For every $u\in \mathcal{S}^{p-1}$, observe that $h_{u,k}(x,y) = \psi'_k \circ f_u(y)  \cdot I_u(x,y)$. Regarding each element of $\psi'_k(\mathcal{F}^p) := \{ y \mapsto \psi'_k\circ f_u(y) :  f_u \in \mathcal{F}^p\}$ as a measurable function on $\mathbb{S}_2$, i.e. $  (x,y) \mapsto \psi'_k \circ f_u(y)$, we have $\mathcal{H}_k \subset \psi'_k(\mathcal{F}^p) \cdot  \mathcal{I}^p$ for $\mathcal{F}^p$ and $\mathcal{I}^p$ given in \eqref{class.FI}. Since both the classes $\mathcal{F}^p$ and $\mathcal{I}^p$ have envelop $\equiv 1$ and the function $\psi'_k$ is Lipschitz continuous, it follows from Lemma~A.6 and Corollary~A.1 in \cite{chernozhukov2012gaussian} that, for any $0<\varepsilon\leq 1$,
\be
 \sup_{Q \, {\rm discrete}} N \big( \psi'_k(\mathcal{F}^p) , L_2(Q) , \varepsilon  B_{2d}  \big) \leq \sup_{Q \, {\rm discrete}} N \big(  \mathcal{F}^p, L_2(Q) , \varepsilon \big) \label{preserve.1}
\ee
and
\begin{align}
& \sup_{Q \, {\rm discrete}} N\big(  \mathcal{H}_k , L_2(Q)  , 2\varepsilon  B_{2d} \big) \nn \\
&  \leq   \sup_{Q \, {\rm discrete}} N\big(   \psi'_k(\mathcal{F}^p), L_2(Q), \varepsilon B_{2d} \big)  \sup_{Q \, {\rm discrete}} N\big(  \mathcal{I}^p, L_2(Q), \varepsilon \big), \label{preserve.2}
\end{align}
where the suprema appeared above are taken over all finitely discrete Borel probability measures on $\mathbb{S}_2$.

In view of \eqref{preserve.1} and \eqref{preserve.2}, it remains to focus on the classes $\mathcal{F}^p$ and $\mathcal{I}^p$. Arguments similar to those in \cite{Sherman_1994} can be used to control the entropies of $\mathcal{I}^p$. To see this, define $\mathcal{V}=\{v(\cdot,\cdot, \cdot; u) : u\in \br^p\}$ and $\mathcal{W}=\{ w(\cdot,\cdot, \cdot; \gamma)  :  \gamma\in \br\}$, where $v(x,y,t;u )=u^{\intercal} x- u^{\intercal} y$ and $w(x,y,t; \gamma)=\gamma \, t$ for $x,y \in \br^p$ and $t\in \br$. Note that $\mathcal{V}$ (resp. $\mathcal{W}$) is a $p$-dimensional (resp. $1$-dimensional) vector space of real-valued functions on $\mathbb{S}_2 \times \br$. By Theorem~4.6 in \cite{Dudley_2014}, the class of sets of the form $\{z:v(z)>s\}$ or $\{z:v(z)\geq s\}$ with $v\in \mathcal{V}$ for some $s\in \br$ fixed is a VC class with index $p+1$. For every $u\in \mathcal{S}^{p-1}$,
\begin{align*}
{\rm graph}(I_u) & = \big\{ (x,y,t)\in  \mathbb{S}_2 \times \br : 0< t < I_u(x,y) \big\} \\
	& = \big\{ u^{\intercal} x-u^{\intercal} y \leq 0 \big\} \cap \{ t>0 \}  \cap \{t\geq 1\}^{{\rm c}} \\
	& = \{ v_1 > 0 \}^{{\rm c}} \cap \{ w_1 > 0 \} \cap \{w_2 \geq 1\}^{{\rm c}} ,
\end{align*}
where $v_1 \in \mathcal{V}$ and $w_1, w_2 \in \mathcal{W}$. Together with Lemma~9.7 in \cite{Kosorok_2008}, this implies that $\{{\rm graph}(I_u): I_u \in \mathcal{I}\}$ forms a VC class with index $\leq p+3$. Consequently, by Theorem~9.3 in \cite{Kosorok_2008}, there exist constants $a>2e$ and $c\geq 2$ such that
\be
	\sup_{Q} N \big( \mathcal{I}^p , L_2(Q), \varepsilon \big) \leq  ( a/\varepsilon  )^{c   p}  \label{unif.entropy.bound.1}
\ee
for any $0<\varepsilon\leq 1$, where the supremum is taken over all Borel probability measures on $\mathbb{S}_2$. For $\mathcal{F}^p$, applying Lemma~A.2 in \cite{Ghosal_Sen_van der Vaart_2000} combined with \eqref{unif.entropy.bound.1} gives
\be
	\sup_Q N\big( \mathcal{F}^p , L_2(Q), 2\varepsilon  \big) \leq \sup_Q N\big( \mathcal{I}^p, L_2(P_X \times Q) ,  \varepsilon^2 \big)  \leq  \big( \s{a}/\varepsilon \big)^{2c p}, \label{unif.entropy.bound.2}
\ee
where the supremum ranges over all Borel probability measures on $\br^p$.

Together, \eqref{decomposition.bound}--\eqref{unif.entropy.bound.2} imply the VC-type property of the class $\mathcal{H}$.

We consider next the class $\mathcal{H}_0$ of degenerate kernels under $H_0$, which admits a partition similar to \eqref{decomposition.bound}, i.e. $\mathcal{H}_0=\cup_{k=1}^d \mathcal{H}_{0 k}$. Observe that for each $(u,k)\in \mathcal{T}$,
\be
	\bar{h}_{u,k}(x,y) = h_{u,k}(x,y)  +   \psi_k \circ f^u(x)  + \phi_k \circ f^u(y)  ,   \ \    x, y \in \br^p.  \nn
\ee
where $h_{u,k} \in \mathcal{H}_{0 k} \subset \mathcal{H}$, $f^u \in \mathcal{F}$ and $\phi_k(s) : = - s \psi'_k(s)$ for $0\leq s\leq 1$. For any $u,v\in \mathcal{S}^{p-1}$ and $k\in [d]$, we have $| \phi_k \circ f_u(y) - \phi_k \circ f_v(y) |  \leq 2 B_{2d} | f_u(x) - f_v(y)|$. This, together with Lemma~A.6 in \cite{chernozhukov2012gaussian} yields
\be
	\sup_{Q \, {\rm discrete}} N \big( \phi_k(\mathcal{F}^p ) , L_2(Q), 2\varepsilon B_{2d} \big) \leq \sup_{Q \, {\rm discrete}} N \big(  \mathcal{F}^p  , L_2(Q), \varepsilon \big). \label{preserve.3}
\ee

On combing \eqref{preserve.1}, \eqref{preserve.2} and \eqref{preserve.3}, and recalling the permanence of the uniform entropy bound under summation that is implied by Lemma~A.6 in \cite{chernozhukov2012gaussian}, we obtain
\begin{align*}
 	& \sup_{Q \, {\rm discrete}} N \big(  \mathcal{H}_{0 k} ,L_2(Q), 3\varepsilon B_{2d}   \big) \nn \\
 	& \leq  \sup_{Q \, {\rm discrete}} N\big( \mathcal{H}_{0 k} , L_2(Q), \varepsilon B_{2d}  \big)  \\
 	& \qquad \times \sup_{Q \, {\rm discrete}} N \big( \psi_k(\mathcal{F}^p), L_2(Q), \varepsilon B_{2d} \big)   \sup_{Q \, {\rm discrete}} N\big( \phi_k(\mathcal{F}^p) , L_2(Q), \varepsilon  B_{2d} \big) \\
 	& \leq    \sup_{Q \, {\rm discrete}} N \big(\mathcal{I}^p, L_2(Q), \varepsilon / 2  \big)   \bigg\{ \sup_{Q \, {\rm discrete}}  N\big(  \mathcal{F}^p , L_2(Q), \varepsilon/ 2 \big) \bigg\}^3.
\end{align*}
This completes the proof of \eqref{uniform.entropy.H} in view of \eqref{unif.entropy.bound.1} and \eqref{unif.entropy.bound.2}.
\end{proof}

\subsection{Proof of Lemma~\ref{negligibility.degenrate.U.process}}

Observe that $\{U_{m,n}( h_0 )\}_{ h_0 \in \mathcal{H}_0}$ forms a degenerate two-sample $U$-process indexed by $\mathcal{H}_0$ and by Lemma~\ref{VC.class.H}, $\mathcal{H}_0$ is VC-type with envelop $\equiv 2 B_{2d}$. The entropy bound given in \eqref{uniform.entropy.H} now allows us to apply Lemma~2.4 in \cite{Neumeyer_2004}, yielding
\begin{align}
 & 	E \big(  \| U_{n,m}  \|_{\mathcal{H}_0} \big) \nn \\
 &  \lesssim  B_{2d} \s{nm} \bigg\{ \frac{1}{4}  +   \int_0^{1/4} \sup_{Q \, {\rm discrete}}\s{\log N (\mathcal{H}_0, L_2(Q), 2\varepsilon B_{2d})} \, d\varepsilon \bigg\} \nn \\
 &\lesssim B_{2d} \s{nm} \bigg\{ \frac{1}{4}+  \int_0^{1/4}  \s{ \log d  + v p \log(A/\varepsilon) } \, d\varepsilon \bigg\} \nn \\
 & \lesssim  B_{2d} \s{nm}\bigg\{ \frac{1}{4}  \big(1+\s{\log d} \big)+  \s{vp} \int_{4A }^\infty t^{-2}\s{\log t } \, dt \bigg\} . \label{mean.Unm.ubd}
\end{align}
For any $a>e$, it follows from integration by parts that
\begin{align}
 & \int_a^\infty t^{-2}\s{\log t} \, dt   = a^{-1}\s{\log a} + \frac{1}{2} \int_a^\infty t^{-2}(\log t)^{-1/2} \, dt \nn \\
& \leq  a^{-1} \s{\log a} + \frac{1}{2\log a} \int_a^\infty t^{-2} \s{\log t} \, dt \leq   a^{-1} \s{\log a} + \frac{1}{2} \int_a^\infty t^{-2} \s{\log t} \, dt. \nn
\end{align}
Substituting this into \eqref{mean.Unm.ubd} proves \eqref{mean.sup.Unm.ubd}. \qed

\subsection{Proof of Lemma~\ref{negligibility.EDF}}

Define the class $\mathcal{G} = \{ x \mapsto g_{u,t}(x) = I(u^{\intercal} x \leq t ) : (u,t)\in \mathcal{S}^{p-1} \times \br  \}$ of indicator functions on closed half-spaces in $\br^d$, such that
\beq
	D_{n}(\mathcal{G}) := \sup_{(u,t)\in \mathcal{S}^{p-1}\times \br}  | F^u_n(t) - F^u(t)   | =  \sup_{g \in \mathcal{G}}  \bigg| \frac{1}{n} \sn \{ g(X_i) - P_X g  \} \bigg|,
\eeq
where $P_X g := E \{g(X)\}$. Note that, for every $(u,t)\in \mathcal{T}$, $\var\{g_{u,t}(X)\}=F^u(t)\{1-F^u(t)\}\leq 1/4$. A direct consequence of Theorem~7.3 in \cite{Bousquet_2003} is that, for every $t\geq 0$,
\begin{align}
	P\bigg(  D_{n}(\mathcal{G}) \geq E  \{ D_{n}(\mathcal{G}) \} +\big[ \tfrac{1}{2} + 4E \{ D_{n}(\mathcal{G}) \}  \big]^{1/2} \s{\frac{t}{n}}+  \frac{t}{3n}  \bigg)  \leq 2 e^{-t}.  \nn
\end{align}

To control the expectation $E \{ D_{n}(\mathcal{G}) \} $, first it follows from Theorem~B in \cite{dudley1979balls} that the class $\mathcal{G}$ is a VC-subgraph class with index $p+2$, such that for any probability measure $Q$ on $\br^p$ and any $0<\varepsilon<1$, $N ( \mathcal{G}, L_2(Q),  \varepsilon ) \leq (C \varepsilon^{-1})^{2(p+1)}$. This, together with Proposition~3 in \cite{Gine_Nickl_2009} gives
\begin{align}
	E  \big\{ D_{n}(\mathcal{G})  \big\} \lesssim    \s{\frac{p}{n}} + \frac{p}{n}  .   \nn
\end{align}
Since $D_n(\mathcal{G})\leq 1$, the last three displays together complete the proof of \eqref{edf.uniform.rate}. \qed

\subsection{Proof of Lemma~\ref{Gaussian.approximation.prop}}

To prove \eqref{CCK.Gaussian.approximation}, a new coupling inequality for the suprema of empirical processes in \cite{chernozhukov2012gaussian} plays an important role in our analysis. Recall in the proof of Lemma~\ref{VC.class.H} that the collection $\mathcal{F}^p$ is VC-type, from which we obtain
\be
	 \sup_{Q \, {\rm discrete}} N\big(  {\mathcal{F}}  , L_2(Q), \varepsilon  B_{1d} \big) \leq  \sum_{k=1}^d \sup_{Q \, {\rm discrete}} N\big( \psi_k(\mathcal{F}^p), L_2(Q), \varepsilon B_{1d} \big) \leq d \cdot (A / \varepsilon)^{v p}  \label{barF.entropy.bound}
\ee
for some constants $A>2e$ and $v\geq 2$, where $\mathcal{F}= \mathcal{F}^p_d $. This implies by Lemma~2.1 in \cite{chernozhukov2012gaussian} that the collection ${\mathcal{F}}$ is a VC-type pre-Gaussian class with a constant envelop $\equiv B_{1d}$. Therefore, there exists a centered, tight Gaussian process $\mathbb{G}$ defined on $ {\mathcal{F}}$ with covariance function \eqref{covariance.function}. Moreover, for any integer $k\geq 2$,
\begin{align}
	\sup_{ f \in  {\mathcal{F}}}  P_X |f|^k  \leq B_{0d}^{k-2}\sup_{ (u,k)\in \mathcal{S}^{p-1}\times [d] } E   \big\{ \psi_k(U^u)^2 \big\} = B_{0d}^{k-2}, \label{unif.moment.bd}
\end{align}
where we used the fact that $U^u=F^u(u^{\intercal} X) =_d {\rm Unif}(0,1)$ and hence $E   \{ \psi_k(U^u)^2 \}=1 $ for all $(u,k)\in \mathcal{S}^{p-1}\times [d]$.

The entropy bound \eqref{barF.entropy.bound} and the moment inequality \eqref{unif.moment.bd} now allow to apply Corollary~2.2 in \cite{chernozhukov2012gaussian}, yielding \eqref{CCK.Gaussian.approximation}. \qed

\end{document}